\documentclass[10pt]{article}

\usepackage{enumerate}

\usepackage{tikz}
   \def\tikzscale{0.8}
\usepackage{amsmath, amsfonts, amssymb}
\usepackage{latexsym}
\usepackage{amsthm}
\usepackage{bbm}
\usepackage{bbm}
\usepackage[T1]{fontenc}
\usepackage{graphicx}
\usepackage{textcomp}

\usepackage[vmargin={1.25in,1.25in},
            hmargin={1.3in,1.3in}
            ]{geometry}

\usepackage{floatrow}
\newfloatcommand{capbtabbox}{table}[][\FBwidth]

\usepackage{url}

\def\Rec{\mathop{{\rm Rec}}\nolimits}
\def\dist{\mathop{{\rm dist}}\nolimits}
\def\diam{\operatorname{diam}}
\def\NN{{\mathbb N}}
\def\PP{{\mathbb P}}

\def\TT{{\mathbb T}}
\def\ZZ{{\mathbb Z}}

\def\({\left (}
\def\){\right )}

\newcommand{\floor}[1]{\left\lfloor #1 \right\rfloor}
\newcommand{\ceil}[1]{\left\lceil #1 \right\rceil}
\newcommand{\closure}[1]{\left\langle #1 \right\rangle}

\newtheorem{theorem}{Theorem}
\newtheorem{lemma}[theorem]{Lemma}

\newtheorem{prop}[theorem]{Proposition}
\newtheorem{question}[theorem]{Question}
\newtheorem{conj}[theorem]{Conjecture}

\newtheorem{fact}[theorem]{Fact}
\newtheorem{observation}[theorem]{Observation}
\newtheorem{defn}[theorem]{Definition}
\newtheorem{claim}[theorem]{Claim}
\newtheorem{remark}[theorem]{Remark}

\begin{document}

\title{Maximum percolation time in two-dimensional bootstrap percolation}

\author{Fabricio Benevides\thanks{Departamento de Matem{\'a}tica, Universidade Federal do Cear{\'a}, Av. Humberto Monte, s/n., Bloco~914, Fortaleza, CE 60455-760, Brazil, E-mail: fabricio@mat.ufc.br. Supported by FUNCAP and CNPq.} \and Micha{\l} Przykucki\thanks{Department of Pure Mathematics and Mathematical Statistics, University of Cambridge, Wilberforce Road, Cambridge CB3 0WB, UK, and London Institute for Mathematical Sciences, 35a South St, Mayfair, London W1K 2XF, UK. Supported in part by MULTIPLEX no.\ 317532. E-mail: michal@london-institute.org}}

\maketitle

\begin{abstract}
We consider a classic model known as bootstrap percolation on the $n \times n$ square grid. To each vertex of the grid we assign an initial state, infected or healthy, and then in consecutive rounds we infect every healthy vertex that has at least $2$ already infected neighbours. We say that percolation occurs if the whole grid is eventually infected. In this paper, contributing to a recent series of extremal results in this field, we prove that the maximum time a bootstrap percolation process can take to eventually infect the entire vertex set of the grid is $13n^2/18+O(n)$.
\end{abstract}

\section{Introduction}
\label{sec:intro}
In this paper we consider a particular extremal problem in $2$-neighbour bootstrap percolation on the $n \times n$ square grid. Our aim is to find the maximum time it may take to infect the whole grid, as precisely defined below.

Given a graph $G$ and a natural number $r$, consider the following process known as $r$-neighbour bootstrap percolation on $G$. Choose a subset $A \subset V (G)$ of vertices (that in the context of percolation are usually called sites) and infect all of its elements, leaving the remaining vertices healthy. Then, in consecutive rounds, infect every healthy site that has at least $r$ already infected neighbours. More formally, set $A_0 = A$ and, for $t \in \NN$, thinking of $A_t$ as the set of sites that have been infected by time $t$ and denoting by $N(v)$ the set of neighbours of $v$, let 
\begin{equation}
\label{eq:A_t}
A_{t} = A_{t-1} \cup \{ v \in V(G) : |N(v)\cap A_{t-1}| \geq r \}.
\end{equation}
The set of all sites that eventually become infected is called the \emph{closure} of $A$ and is denoted by $\closure{A}$. It is clear from the definition that $\closure{A} = \bigcup_{t=0}^{\infty} A_t$. We say that a set $A$ \emph{percolates} if all sites are eventually infected, that is, if $\closure{A} = V(G)$. The choice of the set $A$ may be either random or deterministic, giving rise to questions of different nature. 

Bootstrap percolation was introduced in 1979 by Chalupa, Leath and Reich~\cite{bootstrapbethe} and has found applications in many areas including physics, computer science and sociology. One of the first questions that attracted a lot of attention was related to the critical probability defined as
\[
 p_c (G , r) = \inf \{p : \PP_p (A \text{ percolates in } G \text{ in } r \text{-neighbour bootstrap process}) \geq 1/2 \},
\]
where the elements of the set $A$ are chosen independently at random with probability~$p$. In the most classical and celebrated variant the graph $G$ is the $n \times n$ square grid, denoted by $[n]^2$ (i.e., the set of sites is $V(G) = \{(i,j): 1 \le i, j \le n\}$ and two sites are adjacent if they are at $l_1$ distance 1) and $r=2$. Working in this setup Aizenman and Lebowitz~\cite{metastabilityeffects} showed that  $p_c ([n]^2 , 2) = \Theta \( \frac{1}{\log n} \)$. Using much more sophisticated techniques Holroyd \cite{sharpmetastability} showed that $p_c ([n]^2 , 2) = \frac{\pi^2}{18\log n} + o\(\frac{1}{\log n}\)$, and Gravner, Holroyd and Morris \cite{sharperThreshold} obtained bounds on the second order term. Cerf and Cirillo \cite{scalingthree} and Cerf and Manzo \cite{regimefinite} determined the critical probability for $r$-neighbour bootstrap percolation on $[n]^d$ up to a constant factor, while recently Balogh, Bollob{\'a}s, Duminil-Copin and Morris \cite{sharpbootstrapall} obtained the 
asymptotic value of $p_c ([n]^d , r)$ for all fixed values of $d$ and $r$. More general models of bootstrap percolation were studied by Gravner and Griffeath \cite{thresholdGrowth}, Bollob\'as, Smith and Uzzell \cite{neighbourhoodBootstrap} and Balister, Bollob\'as, Przykucki and Smith \cite{subcriticalBootstrap}.

Now, when the set $A$ is chosen deterministically, various interesting extremal questions arise. The first observation one can make is now a folklore one: in the classic model, with $G=[n]^2$ and $r=2$, the smallest percolating sets have size exactly~$n$. The size of the smallest percolating sets in other graphs and for other values of the infection threshold was studied by Pete \cite{randomdisease} and by Balogh, Bollob{\'a}s, Morris and Riordan \cite{linearAlgebra}. Answering a question posed by Bollob{\'a}s, Morris~\cite{largestgridbootstrap} gave bounds on the maximum size of a minimal percolating set for $G = [n]^2$ and $r=2$. A similar problem for 2-neighbour bootstrap percolation on a hypercube was fully answered by Riedl~\cite{largesthypercubebootstrap} who also studied minimal percolating sets in finite trees \cite{minimalTrees}. In this paper we continue this recent trend and consider another extremal problem posed by Bollob{\'a}s. We give an asymptotic value of the maximum time that any 
percolating subset of the set of vertices of $G = [n]^2$ can take to percolate under $2$-neighbour bootstrap percolation. The main result of this article is the following theorem.
\begin{theorem}
 \label{theorem:arbitraryAsymptotics}
 The maximum time of percolation on the $n \times n$ square grid is $\frac{13}{18}n^2+O(n)$.
\end{theorem}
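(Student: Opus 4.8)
The plan is to prove two matching estimates: a construction showing that some percolating set needs at least $\frac{13}{18}n^2 - O(n)$ rounds, and a general upper bound of $\frac{13}{18}n^2 + O(n)$ rounds valid for every percolating set $A$.

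For the lower bound I would exhibit an explicit initial set $A$ with $|A| = O(n)$ and $\langle A\rangle = [n]^2$ that percolates as slowly as possible. The guiding principle is that under the $2$-neighbour rule an infection front, once it has length comparable to the diameter of the region it is sweeping, advances along a diagonal and infects that many sites per round, hence engulfs the region in few rounds; to be slow, the active front must be kept of bounded length at (almost) all times, which can only be achieved by having it run along walls of $[n]^2$ or along parts of the grid that $A$ has already infected. I would accordingly design a bounded \emph{slow gadget}: a region of some fixed area $a$ together with a constant number of seed sites on its boundary, arranged so that, once its entry seeds are infected, infecting the whole gadget takes $\frac{13}{18}a - O(1)$ rounds with the front never becoming long. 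I would then partition all but an $O(n)$-sized part of $[n]^2$ into translates and reflections of this gadget, lay them out in a snake-like order, and add $O(1)$ connector sites between consecutive gadgets so that the completion of one gadget's infection immediately triggers the next. Summing the round counts gives $\frac{13}{18}n^2 - O(n)$, while the seeds and connectors contribute only $O(n)$ to $|A|$. The delicate points are the exact design of the gadget realising the constant $\frac{13}{18}$ — which is genuinely an optimisation over candidate gadgets and forces a finite but tedious verification of the round-by-round dynamics — and checking that adjacent gadgets do not start infecting each other prematurely.

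For the upper bound, note first the trivial bound: at least one new site is infected in each non-terminal round, so the process ends by round $n^2 - |A|$; thus if $|A| \ge \frac{5}{18}n^2$ we are immediately done, and we may assume $A$ is small. The heart of the matter is an amortised argument. I would assign to each $X \subseteq [n]^2$ a weight $\Phi(X)$ built from $|X|$ together with correction terms capturing the geometry of $X$ — natural candidates are its perimeter and a measure of how its complement decomposes into holes, or of its decomposition into maximal rectangles — chosen so that (i) $\Phi(X) \le \frac{13}{18}n^2 + O(n)$ for every $X$, with equality up to $O(n)$ at $X = [n]^2$, and (ii) $\Phi(A_t) \ge \Phi(A_{t-1}) + 1$ in every round. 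Granting (i) and (ii) (and $\Phi \ge 0$), the percolation time $T$ satisfies $T \le \Phi(A_T) - \Phi(A_0) \le \frac{13}{18}n^2 + O(n)$. The content of (ii) is a local analysis of one round: a round infecting many sites, or sites forming concave corners, strictly drops the perimeter (or simplifies the hole structure) and thereby pays for the extra area; the only threatening rounds are those infecting very few sites, each sandwiched between two opposite infected neighbours, and for these one must argue geometrically that over the whole process such rounds cannot be too numerous relative to the total area — which is precisely where the effective rate $\frac{18}{13}$, i.e.\ the area deficit $\frac{5}{18}n^2$, is forced.

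I expect the main obstacle to be the upper bound, and within it the design of $\Phi$: it must be simultaneously tight enough to yield the exact constant $\frac{13}{18}$ (rather than merely $cn^2$ for some $c$) and monotone under a single round of the process. Pinning down the constant almost surely forces $\Phi$ to see fine structure of the infected region beyond $|X|$ and perimeter, after which establishing (ii) becomes a substantial, though finite, case check over the possible local shapes created in one round. The corresponding difficulty on the lower-bound side — isolating the gadget that matches the constant — is more mechanical, but it is still the place where the value $\frac{13}{18}$ actually originates, so in practice the two sides of the proof have to be worked out together.
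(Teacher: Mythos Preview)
Your lower-bound plan has a concrete obstruction. A bounded gadget of area $a$ with only $O(1)$ entry sites on its boundary and no internal seeds cannot take $\Theta(a)$ rounds to fill: by the semi-perimeter inequality (Fact~\ref{fact:n_necessary}), internally spanning a region of area $a$ requires $\Omega(\sqrt{a})$ initially infected sites inside it; with no internal seeds the gadget either fails to fill at all, or (once enough of its boundary is infected from outside) fills in time at most its diameter, i.e.\ $O(\sqrt{a})$. If instead you allow each gadget to carry its own $\Theta(\sqrt{a})$ internal seeds, then (i) the total seed count becomes $\Theta(n^2/\sqrt{a})$, not $O(n)$, and (ii) those seeds are present from time $0$ in every gadget, so the gadgets do not wait politely in line --- their internal processes run in parallel, and the snake ordering collapses. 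The paper's construction is not a tiling by bounded pieces at all: it is a single nested sequence of rectangles $P_0\subset P_1\subset\cdots\subset P_r=[n]^2$, starting from roughly $\lfloor n/3\rfloor\times 2$, grown first by repeatedly adding two columns and one row (``Move~4'') and then by repeatedly adding three rows (``Move~6''), with one or two isolated seeds placed per move. The seeds in later stages are genuinely inert until the growing rectangle reaches them, and there are $\Theta(n)$ of them; the constant $13/18$ comes from optimising over the switching point between the two phases.

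Your upper-bound plan is not wrong in spirit, but it is a wish rather than a proof: you have not proposed any $\Phi$, and there is no evidence that a potential depending only on local geometric data (area, perimeter, hole structure) can be made to increase by exactly $1$ per round while topping out at $\tfrac{13}{18}n^2$. The paper does something entirely different and much more structural. Using the rectangle decomposition (Proposition~\ref{prop:rectangles}) it proves a recursion for $M(k,\ell)$ and shows that the maximum is always achieved by a ``$(k,\ell)$-perfect'' set, i.e.\ one built by a sequence of seven possible rectangle-growing moves. The upper bound then becomes a combinatorial optimisation over move sequences: by swapping and replacing pairs of moves one reduces any optimal scheme to a short canonical prefix followed by moves of types $4,5,6,7$; a further relaxation to \emph{fractional} moves lets one eliminate type~$5$ and put the remainder in the order $7,4,6$; finally one maximises an explicit one-variable function to obtain $\tfrac{13}{18}n^2+O(n)$. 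The constant emerges from this optimisation, not from a local amortisation.
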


An analogous question for a hypercube was recently answered by Przykucki \cite{przykucki01}. In \cite{benevidesprzykucki02}, Benevides and Przykucki showed that, again for $G = [n]^2$ and $r=2$, when we restrict our attention to percolating sets of size $n$ then the maximum percolation time is equal to the integer nearest to $\frac{5n^2-2n}{8}$. Together with Theorem~\ref{theorem:arbitraryAsymptotics} this implies that, somewhat surprisingly, the slowest percolating sets do not have the minimum possible number of sites.

Benevides, Campos, Dourado, Sampaio and Silva \cite{maxtimegeneralgraphs} considered the computational complexity of the question of finding maximum percolation time on general graphs. They proved that its associated decision problem is NP-complete. Questions related to percolation time have also been considered recently in the probabilistic setup by Bollob{\'a}s, Holmgren, Smith and Uzzell~\cite{densebootstraptime} and by Bollob{\'a}s, Smith and Uzzell~\cite{densebootstraptimeall}.

The structure of this paper is as follows. In Section~\ref{sec:preliminaries} we introduce the basic notation and define $M(k,\ell)$, the function representing the maximum percolation time on the $k \times \ell$ grid. In Section~\ref{sec:recurrency} we define a particular family of percolating sets, prove the asymptotic formula for $M(k,\ell)$ and show that our family contains sets that percolate in time $M(k,\ell)$. In Section~\ref{sec:valueMnn} we prove Theorem~\ref{theorem:arbitraryAsymptotics} and finally in Section~\ref{sec:questions} we show some results that follow from our work and state some open questions and conjectures.

\subsection{Relationship to earlier work}
We acknowledge that some of the techniques used in~\cite{benevidesprzykucki02} are also applied here. However, the condition that $|A|=n$ that was assumed in~\cite{benevidesprzykucki02} makes the question much easier to answer and greatly simplifies the proof. In the case of arbitrary percolating sets our simulations suggest that the maximum percolation time in $[n]^2$ is obtained for sets of size $23n/18 + O(1)$, implying that these two questions are significantly different.

Let us briefly outline here the additional complications that arise when we consider the problem for arbitrary percolating sets. A reader not familiar with our previous article~\cite{benevidesprzykucki02} may skip the remainder of this section. Throughout this paper we shall also explicitly point out the new key ideas in our proofs.

First, the condition $|A|=n$ imposes very strong limitations on the rectangles $R'$ and $R''$ when we apply Proposition \ref{prop:rectangles}. The lack of these limitations makes the definition of $(k,\ell)$-perfect sets in Section \ref{sec:recurrency} much wider than the definition of $(k,\ell)$-good sets in \cite{benevidesprzykucki02}. Consequently, in the proof of the lower bound on $M(k,\ell)$ in Theorem~\ref{theorem:Mnrecurrence} we need to take into the account four constructions of percolating sets that could not occur when $|A| = n$. More importantly, in the proof of the upper bound in Theorem~\ref{theorem:Mnrecurrence} we need to consider three additional situations, namely Condition~\ref{item:CondD},~\ref{item:CondE} and most crucially~\ref{item:CondF}, that could occur when we apply Proposition \ref{prop:rectangles}. The analysis of Condition~\ref{item:CondF} is by far the most important ingredient of the proof of Theorem~\ref{theorem:Mnrecurrence}.

The first part of the proof of Theorem~\ref{theorem:arbitraryAsymptotics}, which we give in Section \ref{sec:valueMnn}, is a significant extension of the methods used in \cite{benevidesprzykucki02}. However, the most important element of the proof is the application of the \emph{fractional moves} that are a new idea introduced in this paper. We explain the precise reasons behind the need to study this new concept in Section~\ref{sec:valueMnn}.

\section{Notation and preliminary observations}
\label{sec:preliminaries}
We write $\Rec(k, \ell)$ to denote the set of all $k$ by $\ell$ rectangles in $\ZZ^2$, i.e., of all subsets of the integer lattice of the form $\{a, a+1, \ldots, a+k-1\} \times \{b, b+1, \ldots, b+\ell-1\}$ for some $a,b \in \ZZ$. When we represent subsets of $\ZZ^2$ graphically we depict $(i,j) \in \ZZ^2$ as a unit square centred at $(i,j)$. We usually use shaded squares to mark infected sites.

The \emph{perimeter} of a set $A \subset \ZZ^2$ is the number of edges between $A$ and $\ZZ^2 \setminus A$ in the integer lattice graph. In our applications it will be more convenient to talk about $\Phi(A)$, the \emph{semi-perimeter} of $A$, which is simply half of its perimeter. Thus, for $R \in \Rec(k, \ell)$ we have $\Phi(R) = k+\ell$.

When we talk about a \emph{distance} between two sites in $\ZZ^2$ we always mean the usual graph distance, i.e., the length of the shortest path between two vertices, that for sites $(i_1,j_1)$, $(i_2,j_2) \in \ZZ^2$ is equal to $|i_1 - i_2| + |j_1 - j_2|$. For two subsets $A, B$ of $\ZZ^2$ the \textit{distance} between them, $\dist(A, B)$, is the minimum distance between a site in $A$ and a site in $B$. Clearly, $\dist(A, B) = 0$ if and only if $A \cap B \neq \emptyset$ and $\dist(A, B) = 1$ if their intersection is empty but there is a site in $A$ that is adjacent to a site in $B$. In our pictures two such sites correspond to unit squares that share an edge. 

Now let us turn to $2$-neighbour bootstrap percolation on the integer lattice. A rectangle $R$ is said to be \emph{internally spanned} by a set $A$ of infected sites if $\closure{A\cap R} = R$. Let us observe that for any set $A$ of initially infected sites we have $\Phi(\closure{A}) \le \Phi(A)$. This is because whenever a new site becomes infected at least two edges are removed from the boundary of the infected set and at most two new edges are added to it. Also, every edge can transmit infection only once from a uniquely determined infected site to a uniquely determined healthy site. Thus the perimeter of the infected area cannot grow during the process. From this observation we have the following fact.

\begin{fact} 
\label{fact:n_necessary}
Given $R \in \Rec(k,\ell)$, if $A \subset R$ internally spans $R$ then $|A| \geq \ceil{\Phi(R)/2} = \ceil{\frac{k+\ell}{2}}$. In particular, if $n \in \NN$ and $A \subset [n]^2$ percolates, then $|A| \ge n$.
\end{fact}

Another simple observation is that, for any set $A$ of infected sites, $\closure{A}$ is a union of rectangles such that any distinct two of them are at distance at least $3$. This can be observed immediately as $A$ is, indeed, a union of $1 \times 1$ rectangles and any two rectangles at distance at most $2$ internally span the minimal rectangle containing them both.

The next proposition from Holroyd~\cite{sharpmetastability}, giving us a deeper insight into the nature of percolating sets, shall be extremely useful in our further considerations.
\begin{prop}\label{prop:rectangles}
Let $R$ be a rectangle with area at least $2$ internally spanned by a set $A$. Then there exist disjoint subsets $A', A'' \subsetneq A$, and subrectangles $R', R'' \subsetneq R$ such that:
\begin{enumerate}
\item \label{prop:rectangles:c1} $\closure{A'} = R'$ and $\closure{A''} = R''$, and
\item \label{prop:rectangles:c2} $\closure{R'\cup R''} = R$; in particular, $\dist(R', R'') \le 2$.
\end{enumerate}
\end{prop}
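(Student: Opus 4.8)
The plan is to run the bootstrap process on $A$ that internally spans $R$ and track the sequence of rectangles that appear as ``connected components'' of the infected set, stopping just before the last merge. Concretely, since $\closure{A} = R$ is a rectangle of area at least $2$, at the final time $T$ at which the process stabilizes we have $A_T = R$ but $A_{T-1} \subsetneq R$. By the observation recorded just before the proposition, each $A_t$ is a disjoint union of rectangles pairwise at distance at least $3$; call these the \emph{components} of $A_t$. As $t$ increases, components can only grow or merge, never split or disappear, and the final set $A_T = R$ is a single rectangle. Let $t_0$ be the last time at which $A_{t_0}$ has at least two components; thus $A_{t_0+1}$ has exactly one component, namely $R$ itself (it cannot have one component that is a proper subrectangle of $R$, as then the process would already have stabilized before reaching $R$).

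Next I would identify $R'$ and $R''$ from the components of $A_{t_0}$. Going from $A_{t_0}$ to $A_{t_0+1}$, the step~\eqref{eq:A_t} infects sites with at least two infected neighbours; the fact that several components coalesce into one in a single round means that, among the components $C_1, \dots, C_m$ of $A_{t_0}$ (with $m \ge 2$), some sub-collection becomes connected-up. Pick two of them, say $C_i$ and $C_j$, that are ``responsible'' for a merge, i.e.\ such that in $A_{t_0+1}$ (or in the closure) they end up in the same component; one can choose them with $\dist(C_i, C_j) \le 2$, since a new infected site bridging a gap has two infected neighbours and hence sits between two components at distance exactly $2$, or two components already at distance $\le 2$ span the bounding rectangle directly. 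Set $R' = C_i$ and $R'' = C_j$. Because components of $A_{t_0}$ are closed under the process restricted to their own neighbourhoods, and each component is the closure of the part of $A$ that falls inside it, we may take $A' = A \cap R'$ and $A'' = A \cap R''$: then $\closure{A'} = R'$ and $\closure{A''} = R''$ (these are genuine subrectangles of $R$, and $A', A''$ are disjoint and nonempty). The distance bound $\dist(R', R'') \le 2$ gives, by the same remark about two rectangles at distance $\le 2$ spanning their bounding box, that $R'$ and $R''$ together internally span the minimal rectangle $R^*$ containing both; and then $\closure{R' \cup R''} \supseteq R^*$, while continuing the process from $R^*$ inside $R$ one checks $\closure{R' \cup R''} = \closure{A_{t_0}} \cap (\text{relevant component}) = R$ — here one uses that $R$ is internally spanned, so nothing outside can block the fill-in. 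Finally $A' \subsetneq A$ and $A'' \subsetneq A$, and $R', R'' \subsetneq R$, since $R$ has area $\ge 2$ and a single component cannot equal $R$ at time $t_0$.

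The main obstacle is the bookkeeping in the merge step: one must argue carefully that \emph{two} particular components can be singled out whose bounding rectangle already internally spans $R$, rather than needing three or more components at once. The clean way to handle this is to not insist on using components of $A_{t_0}$ verbatim, but instead to take a minimal sub-union of components of $A_{t_0}$ whose closure is all of $R$, and then split that sub-union into two pieces along a ``cut'': if the minimal sub-union is $\{C_1,\dots,C_p\}$, consider $\closure{C_1 \cup \dots \cup C_{p-1}}$, which by minimality is a proper subrectangle $R'$ of $R$, set $R'' = C_p$, and check $\dist(R', R'') \le 2$ (otherwise $R'$ and $R''$ would be two components of a later $A_t$ at distance $\ge 3$, contradicting $\closure{A} = R$) and $\closure{R' \cup R''} = R$. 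Taking $A' = A \cap R'$, $A'' = A \cap R''$ then finishes the argument; one should double-check disjointness and that both are proper subsets of $A$, which follows because $R$ has area at least $2$ so neither $R'$ nor $R''$ can be all of $R$. I expect the write-up to be short once this induction-on-merges structure is set up; essentially all the work is already contained in the two elementary observations stated before the proposition.
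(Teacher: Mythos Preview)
Your argument rests on a false premise: you assert that ``each $A_t$ is a disjoint union of rectangles pairwise at distance at least $3$,'' but the observation preceding the proposition says this only of the \emph{closure} $\closure{A}$, not of the intermediate sets $A_t$. A small example shows the failure: with $A=\{(0,0),(0,2),(2,0)\}$, the set $A_0$ already has two singletons at distance $2$, and $A_1$ contains an L-shape, which is not a rectangle at all. So the ``components'' you track through time are not rectangles, and the picture of rectangles growing and merging that you rely on is not what the bootstrap dynamics $A_t$ actually looks like.

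This gap propagates into your proposed fix. There the $C_i$ are connected components of $A_{t_0}$ (not rectangles), so $R'':=C_p$ need not be a rectangle; $R':=\closure{C_1\cup\cdots\cup C_{p-1}}$ is a priori only a \emph{union} of rectangles at pairwise distance $\ge 3$, not a single one; and defining $A'=A\cap R'$, $A''=A\cap R''$ does not guarantee disjointness once $R'$ and $R''$ may overlap (which, as the paper notes right after the proposition, they genuinely can), nor does it guarantee $\closure{A\cap R'}=R'$.

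The standard proof (the paper does not give one but attributes the result to Holroyd) avoids $A_t$ entirely and instead runs an auxiliary \emph{rectangle process}: maintain a collection of pairs $(A_i,R_i)$ with $\closure{A_i}=R_i$ and the $A_i$ a partition of $A$, starting from the singletons $(\{a\},\{a\})$; whenever two rectangles $R_i,R_j$ in the collection satisfy $\dist(R_i,R_j)\le 2$, replace those two pairs by $(A_i\cup A_j,\ \closure{R_i\cup R_j})$. The invariants are preserved at each merge, the process terminates at the single pair $(A,R)$, and the last merge producing $R$ yields the desired $A',A'',R',R''$. Disjointness of $A',A''$ is then automatic from the partition structure, even though $R'$ and $R''$ may overlap. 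Your outline is close in spirit to this, but replacing $A_t$ by this merge process is exactly the missing ingredient.
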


In Proposition~\ref{prop:rectangles} we cannot require $R' \cap R'' = \emptyset$ (see Figure~\ref{figure:overlap}). Also, the choices of $A'$ and $A''$ (and hence also of $R'$ and $R''$) are not necessarily unique. Furthermore, we note that given a set $A$ some sites in $R \setminus (R'\cup R'')$ may become infected in the process while some of $R'\cup R''$ are still healthy. Finally, the disjointness of $A'$ and $A''$ is often a crucial ingredient when we try to answer probabilistic questions in bootstrap percolation. However, it will not be important for our purposes.

\begin{figure}[ht]
  \centering
  \begin{tikzpicture}[scale=\tikzscale]
    \fill[color=lightgray] (0,3.5) rectangle +(0.5,0.5);
    \fill[color=lightgray] (0.5,3) rectangle +(0.5,0.5);
    \fill[color=lightgray] (1,2.5) rectangle +(0.5,0.5);
    \fill[color=lightgray] (1.5,5) rectangle +(0.5,0.5);
    \fill[color=lightgray] (2,4.5) rectangle +(0.5,0.5);
    \fill[color=lightgray] (2.5,4) rectangle +(0.5,0.5);
    \fill[color=lightgray] (2.5,1) rectangle +(0.5,0.5);
    \fill[color=lightgray] (3,0.5) rectangle +(0.5,0.5);
    \fill[color=lightgray] (3.5,0) rectangle +(0.5,0.5);
    \fill[color=lightgray] (4,2.5) rectangle +(0.5,0.5);
    \fill[color=lightgray] (4.5,2) rectangle +(0.5,0.5);
    \fill[color=lightgray] (5,1.5) rectangle +(0.5,0.5);

    \draw[step=.5cm,gray,thick]
    (0,0) grid (5.5,5.5);
    \draw[line width=1mm,black,dashed]
    (0,5.5) -- (3,5.5) -- (3,2.5) -- (0,2.5) -- (0,5.5);
    \draw[line width=1mm,black,dashed]
    (2.5,3) -- (5.5,3) -- (5.5,0) -- (2.5,0) -- (2.5,3);
    \draw[black] (-0.25,4) node [left] {$R'$};
    \draw[black] (5.75,1.5) node [right] {$R''$};
  \end{tikzpicture}
  \caption{An example where the overlapping rectangles $R'$ and $R''$ are uniquely determined by the initially infected sites.}
  \label{figure:overlap}
\end{figure}
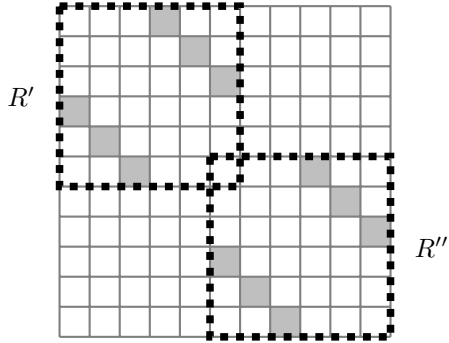

Now let us define the notion of maximum percolation time precisely. For a graph $G$ and a set $A$ of initially infected sites we say that $A$ takes time $T$ to percolate (or ``percolates in time $T$'') if $\closure{A} = V(G)$ and $T$ is the smallest number such that $A_T = V(G)$, where $A_t$ is defined as in \eqref{eq:A_t}. We shall also be interested in the infection time of particular sites $v \in V(G)$. Therefore let $I_A(v)$ be the minimum $T$ such that $v \in A_T$ starting from $A_0 = A$. If starting from $A$ the site $v$ never becomes infected, i.e., if $v \notin \closure{A}$, then we set $I_A(v) = \infty$. Finally, we define
\[
M(n) = \max \{T \in \NN: \mbox{there exists a set } A \mbox{ percolating in time } T \mbox{ in } [n]^2 \} .
\]
In this paper we determine the asymptotic formula for $M(n)$ up to an $O(n)$ additive error. We believe that a constant additive error or maybe even an exact formula could be found with similar techniques but with a much longer and tedious proof. We show that to infect $[n]^2$ in the maximum possible time one should first infect some smaller rectangular grid, not necessarily a square one, in the maximum time. This motivates a definition of the maximum percolation time in rectangles. For any $k, \ell \in \NN$ let
\[
M(k,\ell) = \max \{T \in \NN: \mbox{there exists a set } A \mbox{ percolating in time } T \mbox{ in } [k] \times [\ell] \} .
\]
Note that clearly $M(k,\ell) = M(\ell,k)$. For a rectangle $R \in \Rec(k,\ell)$, to simplify our notation, we shall often write $M(R)$ instead of $M(k,\ell)$.

\section{Slowly percolating sets}
\label{sec:recurrency}

In this section we prove the recursive formula for $M(k,\ell)$ in order to later prove the asymptotic formula for $M(n)$. Let us start by giving a trivial upper bound and a natural lower bound on $M(n)$. Since every percolating set in $[n]^2$ contains at least $n$ sites and for the infection to continue at every step we need to infect at least one new site, we have $M(n) \leq n^2-n$. On the other hand, the example shown in Figure~\ref{figure:2n^2/3} for the $[7]^2$ grid, generalizing in a self--explanatory way to $[n]^2$, shows that there exist initially infected sets of size linear in $n$ for which at approximately half of the number of steps only one site becomes infected while the other steps, with the exception of the first one, yield infection of only two new sites. This clearly implies that $M(n) \geq \frac{2n^2}{3}+O(n)$. We will prove that for every $n$ there is a set which percolates $[n]^2$ in time $M(n)$, for which at every time step at most two new sites become infected, but the number of steps 
for which a single site becomes infected is significantly larger than in the example in Figure~\ref{figure:2n^2/3}.

\begin{figure}[ht]
  \centering
    \begin{tikzpicture}[scale=\tikzscale]
      \foreach \x/\y in {0/0, 1/0, 2/0, 3/0, 0/1, 3/1.5, 0/2.5, 3/3}
	\fill[color=lightgray]  (\x,\y) rectangle +(0.5,0.5);
      \draw[black] (0.75,0.25) node {1};
      \draw[black] (1.75,0.25) node {1};
      \draw[black] (2.75,0.25) node {1};

      \draw[step=.5cm,gray,thick]
      (0,0) grid (3.5,3.5);
      \draw[->] (0.25,0.75) -- (3.25,0.75);
      \draw[->] (0.75,0.75) -- (0.75,1.25) -- (3.25,1.25);
      \draw[->] (2.75,1.25) -- (2.75,1.75) -- (0.25,1.75) -- (0.25,2.25) -- (3.25,2.25);
      \draw[->] (0.75,2.25) -- (0.75,2.75) -- (3.25,2.75);
      \draw[->] (2.75,2.75) -- (2.75,3.25) -- (0.25,3.25);
    \end{tikzpicture}
  \caption{An initial set giving a lower bound $M(n) \geq \frac{2n^2}{3}+O(n)$.}
  \label{figure:2n^2/3}
\end{figure}

The outline of our proof is as follows. First we define a notion of a \textit{$(k,\ell)$-perfect} set of initially infected sites. Next, we prove that the function $M(k,\ell)$ satisfies a certain recursive relation. Simultaneously we show that $(k,\ell)$-perfect sets exist and that their percolation time satisfies the same relation as does the function $M(k,\ell)$. Although we do not find an exact solution to the recursion, we are able to find good lower and upper bounds on $M(n)$. For the lower bound we construct an explicit set of initially infected sites that is ``almost'' $(n,n)$-perfect. Finally, for the upper bound, we define a relaxed version of the infection process and for any $(n,n)$-perfect set $A$ we build an appropriate instance of this new process; from this new instance we get an upper bound for the time that $A$ takes to percolate. Most of the important ideas necessary to obtain an upper bound on $M(n)$ are new and have not appeared in the previous works related to maximum percolation time.

The overall structure of this paper is similar to the one of \cite{benevidesprzykucki02} where we defined the notion of \textit{$(k,\ell)$-good} sets. However, even though it might not seem immediately obvious, the notion of a $(k,\ell)$-perfect set is not only a technical improvement over the $(k,\ell)$-good sets. The freedom arising from the ability to choose an arbitrary number of initially infected sites greatly increases the variety of percolating sets, forcing us to ``beat'' all of them when it comes to percolation time using sets that we have good control over. These shall be precisely our $(k,\ell)$-perfect sets.

Let us start introducing the $(k,\ell)$-perfect sets now. The idea is to look at sets of initially infected sites, say $A$, for which the infection process started from $A$ can be described by a nested sequence of rectangles $P_0 \subset P_1 \subset \ldots \subset P_r$, such that for every $0 \leq i \leq r$ the set $A \cap P_i$ internally spans $P_i$ in maximum time. We shall only consider sequences such that $P_0$ is small and $\Phi(P_{i-1}) + 2 \leq \Phi(P_{i}) \leq \Phi(P_{i-1}) + 3$. That gives us seven possible values of the lengths of the sides of $P_i$ given those of $P_{i-1}$ (see Figures \ref{figure:Moves123} and \ref{figure:Moves4567}). We should note that it is far from obvious that such a set $A$ exists. We prove this in Theorem \ref{theorem:Mnrecurrence} by induction. In order to make this induction easier we add a few other technical conditions to the definition of a $(k,\ell)$-perfect set. Now let us define this notion precisely.

\begin{defn}\label{def:perfectset}
Given $k,\ell \in \NN$ we say that a set $A$ is $(k,\ell)$-perfect if the infection process starting from $A$ can be described in the following way. There exists a nested sequence of rectangles $P_0 \subset P_1 \subset \ldots \subset P_r \in \Rec(k,\ell)$ with $P_i \in \Rec(s_i, t_i)$ for every $0\le i \le r$, satisfying the following properties:
\begin{enumerate} \renewcommand{\theenumi}{\alph{enumi}} 
  \item \label{calRmax:a} either $s_0 \le 2$ or $t_0 \le 2$ or $s_0 = t_0 = 3$; and $s_1, t_1 \geq 3$ with $(s_1, t_1) \neq (3,3)$,
  \item \label{calRmax:b} for each $1 \le i \le r$,
\[
\begin{split}
 P_i & \in \Rec(s_{i-1}+1,t_{i-1}+1) \cup \Rec(s_{i-1}+2,t_{i-1}) \cup \Rec(s_{i-1},t_{i-1}+2) \\
	     & \cup \Rec(s_{i-1}+2,t_{i-1}+1) \cup \Rec(s_{i-1}+1,t_{i-1}+2) \\
             & \cup \Rec(s_{i-1},t_{i-1}+3) \cup \Rec(s_{i-1}+3,t_{i-1}),
\end{split}
\]
  \item \label{calRmax:c} for every $0\le i \le r$, the rectangle $P_i$ is internally spanned by $A \cap P_i$ in the maximum possible time, that is, in time $M(P_i)$,
  \item \label{calRmax:d} for every $0\le i \le r$, if $P_i$ has no side of length $1$ then among the sites becoming infected last in $P_i$ there is at least one of its corner sites,
  \item \label{calRmax:e} for every $1\le i \le r$, if
   \[
    P_i \in \Rec(s_{i-1}+1,t_{i-1}+1) \cup \Rec(s_{i-1},t_{i-1}+2) \cup \Rec(s_{i-1}+2,t_{i-1})
   \]
then there exists a site $v_{i-1} \in A$ such that $P_{i-1} \cup \{v_{i-1}\}$ internally spans $P_{i}$ and $v_{i-1}$ is at distance exactly 2 from one of the corner sites in $P_{i-1}$ (one which becomes infected last in $P_{i-1}$, if there is such) and at distance at least 3 from any other site in $P_{i-1}$ (see Figure~\ref{figure:Moves123}),
  \item \label{calRmax:f} for every $1\le i \le r$, if 
\[
\begin{split}
    P_i & \in \Rec(s_{i-1}+2,t_{i-1}+1) \cup \Rec(s_{i-1}+1,t_{i-1}+2) \\
	    & \cup \Rec(s_{i-1},t_{i-1}+3) \cup \Rec(s_{i-1}+3,t_{i-1})
\end{split}
\]
then there exists a pair of sites $v_{i-1}, w_{i-1} \in A$ such that $P_{i-1} \cup \{v_{i-1}, w_{i-1}\}$ internally spans $P_{i}$ and $v_{i-1}$ is at distance exactly 2 from one of the corner sites in $P_{i-1}$ (one which becomes infected last in $P_{i-1}$, if there is such) and at distance at least 3 from any other site in $P_{i-1}$, while $w_{i-1}$ is at distance exactly 1 from one of the last corner sites to become infected in $\closure{P_{i-1} \cup \{v_{i-1}\}}$ and at distance at least 2 from any other site in $\closure{P_{i-1} \cup \{v_{i-1}\}}$ (see Figure~\ref{figure:Moves4567}).
\end{enumerate}
\end{defn}

From condition (\ref{calRmax:c}), taking $i = r$, it follows that any $(k,\ell)$-perfect set infects a rectangle in $\Rec(k,\ell)$ in time $M(k,\ell)$. In particular, any $(n,n)$-perfect set maximizes percolation time in $[n]^2$.

Given a $(k,\ell)$-perfect set and a sequence $P_0 \subset P_1 \subset \ldots \subset P_r \in \Rec(k,\ell)$ associated with it, for $1 \le i \le r$ and $1\le m \le 7$, we say that we use \emph{Move~$m$} at moment $i$ (to construct $P_{i}$ from $P_{i-1}$) if $P_{i}$ belongs to the $m$-th term of the following list:
\begin{enumerate}
 \item $\Rec(s_{i-1}+1,t_{i-1}+1)$,
 \item $\Rec(s_{i-1}+2,t_{i-1})$,
 \item $\Rec(s_{i-1},t_{i-1}+2)$,
 \item $\Rec(s_{i-1}+2,t_{i-1}+1)$,
 \item $\Rec(s_{i-1}+1,t_{i-1}+2)$,
 \item $\Rec(s_{i-1},t_{i-1}+3)$,
 \item $\Rec(s_{i-1}+3,t_{i-1})$.
\end{enumerate}

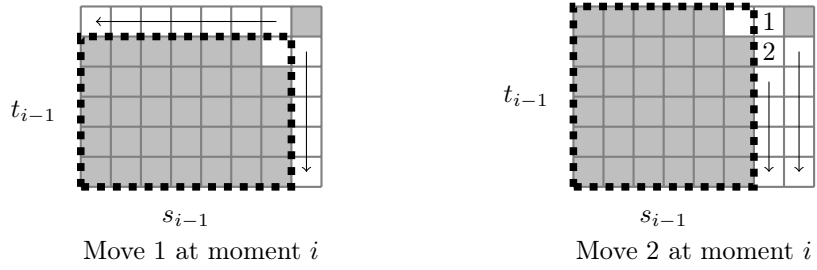
\begin{figure}[ht]
  \centering
  \begin{minipage}[b]{0.4\linewidth}
    \centering
    \begin{tikzpicture}[scale=\tikzscale]
      \fill[color=lightgray]  (3.5,2.5) rectangle +(0.5,0.5);
      \fill[color=lightgray]  (0,0) rectangle (3.5,2);
      \fill[color=lightgray]  (0,2) rectangle (3,2.5);

      \draw[step=.5cm,gray,thick]
      (0,0) grid (4,3);
      \draw[line width=1mm,black,dashed]
      (0,0) -- (0,2.5) -- (3.5,2.5) -- (3.5,0) -- (0,0);
      \draw[->] (3.75,2.25) -- (3.75,0.25);
      \draw[->] (3.25,2.75) -- (0.25,2.75);
      \draw[black] (-0.25,1.25) node [left] {$t_{i-1}$};
      \draw[black] (1.75, -0.85) node [above] {$s_{i-1}$};
      \draw[black] (2,-0.75) node [below] {Move $1$ at moment $i$};
    \end{tikzpicture}
  \end{minipage}
  \hspace{0.5cm}
  \begin{minipage}[b]{0.4\linewidth}
    \centering
    \begin{tikzpicture}[scale=\tikzscale]
      \fill[color=lightgray]  (3.5,2.5) rectangle +(0.5,0.5);
      \fill[color=lightgray]  (0,0) rectangle (3,2.5);
      \fill[color=lightgray]  (0,2.5) rectangle (2.5,3);

      \draw[step=.5cm,gray,thick]
      (0,0) grid (4,3);
      \draw[line width=1mm,black,dashed]
      (0,0) -- (0,3) -- (3,3) -- (3,0) -- (0,0);
      \draw[->] (3.75,2.25) -- (3.75,0.25);
      \draw[->] (3.25,1.75) -- (3.25,0.25);
      \draw[black] (-0.25,1.5) node [left] {$t_{i-1}$};
      \draw[black] (4.25,1.5) node [right] {};
      \draw[black] (1.5,-0.85) node [above] {$s_{i-1}$};
      \draw[black] (3.25,2.75) node {1};
      \draw[black] (3.25,2.25) node {2};
      \draw[black] (2,-0.75) node [below] {Move $2$ at moment $i$};
    \end{tikzpicture}
  \end{minipage}
  \caption{Move $1$ and $2$ (Move $3$ is obtained by rotating the picture of Move $2$ by 90 degrees).}
  \label{figure:Moves123}
\end{figure}

\begin{figure}[ht]
  \centering
  \begin{minipage}[b]{0.4\linewidth}
    \centering
    \begin{tikzpicture}[scale=\tikzscale]
      \fill[color=lightgray]  (3.5,2.5) rectangle +(0.5,0.5);
      \fill[color=lightgray]  (0,2.5) rectangle +(0.5,0.5);
      \fill[color=lightgray]  (0.5,0) rectangle (3.5,2);
      \fill[color=lightgray]  (0,0) rectangle (0.5,1.5);

      \draw[step=.5cm,gray,thick]
      (0,0) grid (4,3);
      \draw[line width=1mm,black,dashed]
      (0,0) -- (0,2) -- (3.5,2) -- (3.5,0) -- (0,0);
      \draw[->] (0.75,2.75) -- (3.25,2.75);
      \draw[->] (1.25,2.25) -- (3.75,2.25) -- (3.75,0.25);
      \draw[black] (0.25,2.25) node {1};
      \draw[black] (0.75,2.25) node {2};
      \draw[black] (-0.25,1) node [left] {$t_{i-1}$};
      \draw[black] (4.25,1.5) node [right] {};
      \draw[black] (1.75, -0.85) node [above] {$s_{i-1}$};
      \draw[black] (2,-0.75) node [below] {Move $5$ at moment $i$};
    \end{tikzpicture}
  \end{minipage}
  \hspace{0.5cm}
  \begin{minipage}[b]{0.4\linewidth}
    \centering
    \begin{tikzpicture}[scale=\tikzscale]
      \fill[color=lightgray]  (3.5,2.5) rectangle +(0.5,0.5);
      \fill[color=lightgray]  (3,0) rectangle +(0.5,0.5);
      \fill[color=lightgray]  (0,0) rectangle (2,3);
      \fill[color=lightgray]  (2,0.5) rectangle (2.5,3);

      \draw[step=.5cm,gray,thick]
      (0,0) grid (4,3);
      \draw[line width=1mm,black,dashed]
      (0,0) -- (0,3) -- (2.5,3) -- (2.5,0) -- (0,0);
      \draw[->] (3.75,2.25) -- (3.75,0.25);
      \draw[->] (2.75,1.25) -- (2.75,2.75);
      \draw[->] (3.25,0.75) -- (3.25,2.25);
      \draw[black] (-0.25,1.5) node [left] {$t_{i-1}$};
      \draw[black] (4.25,1.5) node [right] {};
      \draw[black] (1.25,-0.85) node [above] {$s_{i-1}$};
      \draw[black] (2.75,0.25) node {1};
      \draw[black] (2.75,0.75) node {2};
      \draw[black] (2,-0.75) node [below] {Move $7$ at moment $i$};
    \end{tikzpicture}
  \end{minipage}
    \caption{Move $5$ and $7$ (Move $4$ and $6$ are obtained by rotating the above figures by 90 degrees).}
    \label{figure:Moves4567}
\end{figure}
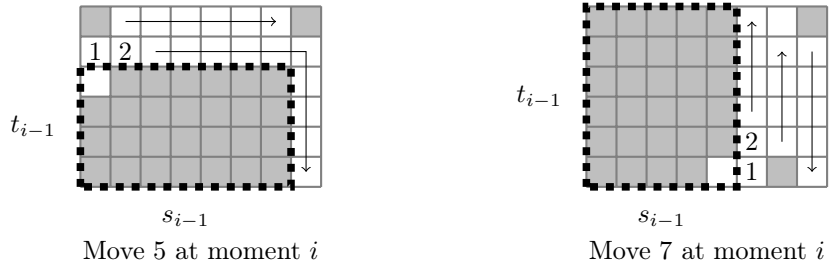

In the next lemma we determine the value of $M(k,2)$ and give an example of a $(k,2)$-perfect set for each natural $k$.

\begin{lemma}\label{lemma:kequals2}
For any natural number $k$ we have $M(k,2) = \floor{\frac{3(k-1)}{2}}$. Furthermore, there is a $(k,2)$-perfect set $A^0(k,2)$ that percolates $[k] \times [2]$ in time $M(k,2)$.
\end{lemma}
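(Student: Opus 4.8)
The plan is to prove the lower bound by exhibiting a concrete $(k,2)$-perfect set, and the upper bound by a counting argument that reduces everything to a single ``tight'' case, which is then settled by a short structural analysis. The cases $k=1,2$ are trivial (the whole grid is forced, resp. the two diagonal corners work) and I would dispose of them by hand.

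For $k\ge 3$ the $(k,2)$-perfect set $A^0(k,2)$ I would use is the ``zigzag'' set: the full first column $\{(1,1),(1,2)\}$, together with one extra infected site in every third column, alternating rows, at $(3,2),(5,1),(7,2),\dots$, plus — when $k$ is even — one final extra site in the last column. Reading the strip in overlapping blocks of width $3$ (columns $1$--$3$, then $3$--$5$, etc.), once a whole column is infected the extra site two columns to its right is exactly what is needed to make the infection creep across the next block, filling it in precisely $3$ steps and never infecting more than one site per step; an even $k$ leaves a single trailing column that the last extra site completes in one further step. This gives percolation time $3\cdot\frac{k-1}{2}$ for odd $k$ and $3\cdot\frac{k-2}{2}+1$ for even $k$, i.e.\ $\floor{3(k-1)/2}$ in both cases. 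Since the infection here is described by the length-zero nested sequence $P_0=[k]\times[2]$, Definition~\ref{def:perfectset} collapses to conditions (\ref{calRmax:c}) and (\ref{calRmax:d}): the former holds once we have the matching upper bound, and the latter is immediate because the infection always ends at a corner of the strip.

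For the upper bound, suppose $A$ percolates $[k]\times[2]$ in time $T$. Each of the $T$ steps infects at least one new site, so $T\le 2k-|A|$, and $|A|\ge\ceil{(k+2)/2}$ by Fact~\ref{fact:n_necessary}. If $|A|>\ceil{(k+2)/2}$, or if $k$ is odd, this already yields $T\le\floor{3(k-1)/2}$. The only remaining case is $k$ even with $|A|=(k+2)/2$. Here $\Phi(A)\le 2|A|=k+2=\Phi([k]\times[2])=\Phi(\closure{A})$, and since $\Phi$ cannot increase during the process, $A$ must be an independent set in the grid and every site that ever becomes infected must do so with exactly two infected neighbours. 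I would then show that in this situation some step infects two sites, improving the bound to $T\le 2k-|A|-1=\floor{3(k-1)/2}$.

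The heart of the matter — and the step I expect to be the main obstacle — is this last claim about the minimal, even case. The key structural fact is that the first and last columns each contain exactly one site of $A$: an initially empty end column can never acquire a first infected site, because each of its two sites has one neighbour in that same column and one in the adjacent column, so infecting either one would require the other already. Using this I would split on whether the second column contains a site of $A$. If it does (necessarily the site in the row opposite to the one occupied in column $1$), then both the missing site of column $1$ and the other site of column $2$ already have two infected neighbours in $A$, hence both are infected at the very first step. If it does not, column $2$ is empty and a short propagation argument near that end — or an application of Proposition~\ref{prop:rectangles} to peel off a thin rectangle there — again forces a pair of simultaneous infections. Putting the two bounds together gives $M(k,2)=\floor{3(k-1)/2}$, with $A^0(k,2)$ as the required $(k,2)$-perfect set. (An alternative, more uniform route to the upper bound is an induction on $k$ via Proposition~\ref{prop:rectangles}: writing $[k]\times[2]=\closure{R'\cup R''}$ with $\dist(R',R'')\le 2$ one gets $T\le\max(M(R'),M(R''))+\delta$, where $\delta$ is the time needed to merge $R'\cup R''$ into the strip, and a case analysis on the few possible shapes of $R',R''$ in a width-$2$ strip bounds $\delta$; the bookkeeping is of comparable difficulty.)
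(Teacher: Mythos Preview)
Your overall strategy is sound and, for the upper bound, genuinely different from the paper's. Two points need attention.

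\medskip
\textbf{Lower bound, even $k$.} Your ``extra site in the last column'' is underspecified, and the wrong choice wrecks the construction. If the zigzag site in column $k-1$ sits in row $r$ and you place the extra site at $(k,3-r)$, then both $(k-1,r)$ and $(k,3-r)$ lie in $A$, so $(k-1,3-r)$ and $(k,r)$ each have two infected neighbours and are \emph{both} infected at step~$1$---simultaneously with $(2,2)$ at the far end---collapsing the percolation time well below $\lfloor 3(k-1)/2\rfloor$. You must place the extra site at $(k,r)$, i.e.\ in the \emph{same} row as the last zigzag site; then your block-by-block description goes through and the time is as claimed. (The paper uses a slightly different even-$k$ construction that sidesteps this issue.)

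\medskip
\textbf{Upper bound, Case 2.} Here there is a real gap. Your sketch---``a short propagation argument near that end''---does not work: if column~$2$ is empty, the pair of simultaneous infections you need may occur far from columns $1$--$2$, and no local analysis at that end will detect it. The clean fix is global. Since $A$ is independent in $[k]\times[2]$, each column holds at most one site of $A$; but $|A|=k/2+1>k/2$, so by pigeonhole two \emph{adjacent} columns $j,j+1$ both carry a site, necessarily $(j,1)$ and $(j+1,2)$ (or the reflection). Then $(j,2)$ and $(j+1,1)$ each have two infected neighbours in $A$ and are both infected at step~$1$, giving the extra saving $T\le 2k-|A|-1$ you need. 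Your column~$1$/column~$2$ split is just the special case $j=1$ of this argument.

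\medskip
For comparison, the paper does not argue via ``two infections at one step'' at all. Instead it runs an induction on even $k$: if $|A|=k/2+1$, a pigeonhole on the $2\times 2$ blocks $\{2i-1,2i\}\times\{1,2\}$ shows that one end block contains a single site of $A$, necessarily in the last column; removing it leaves a set that internally spans $[k-2]\times[2]$, so $T\le M(k-2,2)+3$. Your direct counting route is arguably more elegant once the pigeonhole-on-columns step is supplied; the paper's induction is more robust (it never needs the ``exactly two neighbours at every infection'' observation) and is closer in spirit to the peeling arguments used later in the paper.
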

\begin{proof}
First let us consider the case when $k$ is even. Let $A^0(k,2)$ to be the set of shaded sites in Figure~\ref{figure:2k}. Clearly $A^0(k,2)$ percolates $[k] \times [2]$ in time $\floor{\frac{3(k-1)}{2}} = (3k-4)/2$. Thus we have $M(k, 2) \ge \floor{\frac{3(k-1)}{2}}$ for any $k$ even.

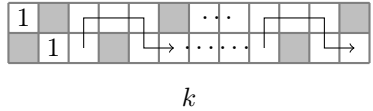
\begin{figure}[ht]
  \centering
  \begin{tikzpicture}[scale=\tikzscale]
  
    \fill[color=lightgray]  (0,0) rectangle +(0.5,0.5);
    \fill[color=lightgray]  (0.5,0.5) rectangle (1,1);
    \fill[color=lightgray]  (1.5,0) rectangle (2,0.5);
    \fill[color=lightgray]  (2.5,0.5) rectangle (3,1);
    \fill[color=lightgray]  (4.5,0) rectangle (5,0.5);
    \fill[color=lightgray]  (5.5,0.5) rectangle (6,1);

    \draw[step=.5cm,gray,thick]
    (0,0) grid (6,1);

    \draw[->] (1.25,0.25) -- (1.25,0.75) -- (1.75,0.75) -- (2.25,0.75) -- (2.25,0.25) -- (2.75,0.25);
    \draw[->] (4.25,0.25) -- (4.25,0.75) -- (4.75,0.75) -- (5.25,0.75) -- (5.25,0.25) -- (5.75,0.25);
    \draw[black] (0.25,0.75) node {1};
    \draw[black] (0.75,0.25) node {1};
    \draw[black] (3.5,0.75) node {\ldots};
    \draw[black] (3.5,0.25) node {\ldots\ldots};
    \draw[black] (4.25,1.5) node [right] {};
    \draw[black] (3,-0.25) node [below] {$k$};
  \end{tikzpicture}
  \caption{A $(k,2)$-perfect set achieving maximum percolation time on $[k] \times [2]$ for $k$ even.}
  \label{figure:2k}
\end{figure}

Now we prove by induction on $k$ that for any $k$ even we have $M(k,2) \le (3k-4)/2$. Clearly, ${M(2, 2) = 1}$. Assume that $k \ge 4$ is even and that $M(k-2,2) = (3k-10)/2$. Let $A$ be any set that percolates $[k] \times [2]$. Since percolation time is at most the number of initially healthy sites, if $|A| \ge k/2+2$ then it percolates in time at most $2k-(k/2+2) = (3k-4)/2$. On the other hand, by Fact \ref{fact:n_necessary}, we must have $|A| \ge k/2+1$. Therefore we may assume that the cardinality of $A$ is exactly $k/2+1$.

Since $A$ percolates, for all $1\le i \le k/2$, any $2 \times 2$ square of the form $\{2i-1, 2i\}\times\{1,2\}$ contains at least one site of $A$. Hence only one of these squares contains two sites of $A$. Therefore, either $\{1,2\} \times \{1, 2\}$ or $\{k-1, k\} \times \{1,2\}$ contains exactly one such site. Assume without loss of generality that the latter holds. As $A$ percolates, either $(k,1)$ or $(k,2)$ must be an initially infected site. Again without loss of generality we may assume that the latter holds. In this setting it is trivial to check that $A \setminus \{(k,2)\}$ internally spans $[k-2]\times [2]$. Therefore $A$ takes time at most $M(k-2,2) + 3 = (3k-4)/2$ to percolate. It is also trivial to check that the sequence consisting of only one rectangle, say $P_0 = [k] \times [2]$, satisfies the conditions in Definition~\ref{def:perfectset}.

For $k$ odd, the set in Figure~\ref{figure:2kOdd} has the minimum cardinality necessary for a set to percolate $[k] \times [2]$ and at each time step causes infection of only one site. Therefore it percolates in the maximum time that is indeed $\floor{\frac{3(k-1)}{2}}$. It is an immediate observation that it satisfies all conditions of a $(k,2)$-perfect set.
\end{proof}

\begin{figure}[ht]
  \centering
  \begin{tikzpicture}[scale=\tikzscale]
  
    \fill[color=lightgray]  (0,0) rectangle +(0.5,1);
    \fill[color=lightgray]  (1,0) rectangle +(0.5,0.5);
    \fill[color=lightgray]  (2,0.5) rectangle +(0.5,0.5);
    \fill[color=lightgray]  (4,0) rectangle +(0.5,0.5);
    \fill[color=lightgray]  (5,0.5) rectangle +(0.5,0.5);

    \draw[step=.5cm,gray,thick]
    (0,0) grid (5.5,1);

    \draw[->] (0.75,0.25) --(0.75,0.75) -- (1.75,0.75) -- (1.75,0.25) -- (2.25,0.25);
    \draw[->] (3.75,0.25) -- (3.75,0.75) -- (4.75,0.75) -- (4.75,0.25) -- (5.25,0.25);
    \draw[black] (3,0.75) node {\ldots};
    \draw[black] (3,0.25) node {\ldots\ldots};
    \draw[black] (3.75,1.5) node [right] {};
    \draw[black] (2.75,-0.25) node [below] {$k$};
  \end{tikzpicture}
  \caption{A $(k,2)$-perfect set achieving maximum percolation time on $[k] \times [2]$ for $k$ odd.}
  \label{figure:2kOdd}
\end{figure}
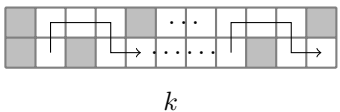

In the next theorem we state the recursive formula for $M(k, \ell)$. We should keep in mind the description of $(k,\ell)$-perfect initially infected sets because the proof of the theorem is built on the proof of existence and a construction of such sets. Since $M(k, \ell) = M(\ell, k)$, we shall omit some cases where $k < \ell$.

\begin{theorem}
\label{theorem:Mnrecurrence}
We have $M(1,1) = M(2,1) = 0$; $M(k, 1) = 1$ for all $k \ge 3$; $M(k, 2) = \floor{\frac{3(k-1)}{2}}$; and $M(3, 3) = 4$. For $k, \ell \ge 3$ such that $(k, \ell) \neq (3,3)$, we have
\begin{equation}
\label{equation:Mnrecurrence}
M(k,\ell) = \max
  \begin{cases}
    M(k-1,\ell-1) + \max\{k, \ell\}-1,\\
    M(k-2, \ell) + \ell+1,\\
    M(k,\ell-2) + k+1,\\
    M(k-2,\ell-1)+k+\ell-2,\\
    M(k-1,\ell-2)+k+\ell-2,\\
    M(k,\ell-3)+2k-1,\\
    M(k-3,\ell)+2\ell-1,
  \end{cases}
\end{equation}
where we assume $M(k,0) = M(0,\ell) = - \infty$. Furthermore, for any $k,\ell>0$ there exists a $(k, \ell)$-perfect set.
\end{theorem}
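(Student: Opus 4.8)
The plan is to prove the recursion and the existence of $(k,\ell)$-perfect sets simultaneously, by induction on the semi-perimeter $\Phi([k]\times[\ell])=k+\ell$; this is sound because every rectangle occurring on the right of~\eqref{equation:Mnrecurrence} has semi-perimeter smaller by $2$ or $3$. The base cases are the rectangles with a side of length at most $2$, together with $[3]^2$. For a side of length $\le 1$ the value of $M$ is immediate; for a side of length $2$ it is Lemma~\ref{lemma:kequals2}, whose proof also supplies a $(k,2)$-perfect set; and for $[3]^2$ one checks directly that $M(3,3)=4$ (realized, for instance, by three corners of $[3]^2$), and that together with the one-term chain $P_0=[3]^2$ this meets Definition~\ref{def:perfectset}. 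In every base case the one-term chain makes conditions~(\ref{calRmax:b})--(\ref{calRmax:f}) vacuous and only~(\ref{calRmax:d}) requires a short verification.

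For the inductive step fix $k,\ell\ge 3$ with $(k,\ell)\neq(3,3)$ and assume the theorem for all smaller semi-perimeters. For the lower bound together with existence: for each Move $m\in\{1,\dots,7\}$ whose associated smaller rectangle has positive side lengths, take by the inductive hypothesis a perfect set for that rectangle with its chain $P_0\subset\cdots\subset P_r$, and adjoin one site $v_r$ (Moves~1--3) or two sites $v_r,w_r$ (Moves~4--7) in the positions prescribed by conditions~(\ref{calRmax:e}) and~(\ref{calRmax:f}); these positions are available precisely because condition~(\ref{calRmax:d}) provides a corner among the last-infected sites of $P_r$ to attach $v_r$ to. Inspecting the resulting growth — the staircase region $[k]\times[\ell]\setminus P_r$ is infected one anti-diagonal per step, exactly as in Figures~\ref{figure:Moves123} and~\ref{figure:Moves4567} — one sees that the new set percolates $[k]\times[\ell]$, that its percolation time is precisely the $m$-th entry of~\eqref{equation:Mnrecurrence}, and that the extended chain $P_0\subset\cdots\subset P_r\subset P_{r+1}=[k]\times[\ell]$ satisfies conditions~(\ref{calRmax:a})--(\ref{calRmax:f}): everything with index $\le r$ is inherited, the requirements at index $r+1$ hold by construction, and~(\ref{calRmax:d}) at $r+1$ holds because the last anti-diagonal filled contains a corner of $[k]\times[\ell]$. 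Hence $M(k,\ell)\ge\max(\cdots)$, and once the upper bound gives the reverse inequality, the construction for the maximizing Move is the desired $(k,\ell)$-perfect set, percolating in time exactly $M(k,\ell)$.

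For the upper bound, let $A$ percolate $R:=[k]\times[\ell]$ in time $T$ and apply Proposition~\ref{prop:rectangles} to obtain $A',A'',R',R''$. A monotone coupling — the process on $R$ started from $A$ dominates, inside $R'$, the process on $R'$ started from $A'$, and likewise for $R''$ — shows $R'\subseteq A_{M(R')}$ and $R''\subseteq A_{M(R'')}$, so $R'\cup R''\subseteq A_{\max\{M(R'),M(R'')\}}$, after which $R$ is filled deterministically from $R'\cup R''$ in a number of steps fixed by the geometry. Since $R$ is the bounding box of $R'\cup R''$, $\dist(R',R'')\le 2$, and $\closure{R'\cup R''}=R$ forces any gap between $R'$ and $R''$ to close, there are — up to the symmetry $M(k,\ell)=M(\ell,k)$ and, where necessary, after a second application of Proposition~\ref{prop:rectangles} inside the larger of $R',R''$ to bring its ``core'' down to semi-perimeter at most $k+\ell-2$ — only finitely many essentially different configurations, which I would organize into Conditions (A)--(F). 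In the first three the estimate $T\le\max\{M(R'),M(R'')\}+(\text{sweep time})$, combined with the inductive hypothesis (and the monotonicity of $M$ in each argument, which can be folded into the induction), matches one of the first five entries of~\eqref{equation:Mnrecurrence} directly. Conditions (D), (E) and (F) are the genuinely new ones — none can occur under the hypothesis $|A|=n$ of~\cite{benevidesprzykucki02} — and there one must carefully track how a near-maximally slow spanning of the core rectangle can coexist with a long subsequent sweep.

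I expect Condition (F) to be the main obstacle. In that configuration the two extra sites or subrectangles produced by Proposition~\ref{prop:rectangles} can overlap and interfere, so that the naive bound $\max\{M(R'),M(R'')\}+(\text{sweep time})$ overshoots the target; the fix is a dedicated argument showing that one cannot simultaneously span the core rectangle in near-maximum time \emph{and} run a near-maximum sweep, producing instead a bound matching $M(k,\ell-3)+2k-1$ (or its reflection), i.e.\ the last two entries of~\eqref{equation:Mnrecurrence}. This case is also what forces the notion of a $(k,\ell)$-perfect set to be as permissive as it is: the extremal shape it must dominate is one that the narrower $(k,\ell)$-good sets of~\cite{benevidesprzykucki02} do not capture.
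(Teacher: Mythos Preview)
Your overall architecture matches the paper's: induction on $k+\ell$, seven explicit constructions for the lower bound, and for the upper bound an application of Proposition~\ref{prop:rectangles} followed by a case split into Conditions~(A)--(F), with~(F) singled out as the hard case. Two points, however, need correcting.

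First, a bookkeeping issue that signals a structural misconception. Conditions~(A)--(C), with~(D) and~(E) reducing to them, yield only the first \emph{three} entries of~\eqref{equation:Mnrecurrence} (this is exactly~\eqref{eq:boundABC} in the paper). All four of the remaining entries --- $M(k-2,\ell-1)+k+\ell-2$, $M(k-1,\ell-2)+k+\ell-2$, $M(k,\ell-3)+2k-1$, $M(k-3,\ell)+2\ell-1$ --- emerge from the Condition~(F) analysis. So your plan to extract only the last two entries from~(F) cannot close the argument. Relatedly, your reason for the second application of Proposition~\ref{prop:rectangles} (``to bring the core down to semi-perimeter at most $k+\ell-2$'') is off: $R'$ already has semi-perimeter $k+\ell-1$, so the inductive hypothesis applies to it. The second decomposition is needed not to invoke induction but because the naive bound $M(R')+\ell-1$ can genuinely exceed the right-hand side of~\eqref{equation:Mnrecurrence}.

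Second, and this is the real gap: your ``dedicated argument showing that one cannot simultaneously span the core in near-maximum time and run a near-maximum sweep'' does not name the mechanism that makes the bound work. The paper's key lemma here is Claim~\ref{claim:cornersLast}: in any set percolating a rectangle with both sides $\ge 2$, every \emph{non-corner} site is infected by time $M-1$. In Condition~(F), $R''$ is a single site adjacent to the long side of $R'$, so the sweep of the last column can begin no later than time $M(R')-1$, improving the bound to $M(R')+\ell-2$. Even this is not enough on its own; one then re-decomposes $R'$ into $R'_1,R'_2$ and does a genuine case analysis on which of Conditions~(A)--(F) the pair $R'_1,R'_2$ satisfies inside $R'$. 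When they satisfy~(A)--(E) one combines the earlier bounds on the time $A'$ takes to span $R'$ with the $+(\ell-2)$ to obtain entries four through seven; when they again satisfy~(F), one observes that $R'_1$ and the outer $R''$ are at distance $\le 2$, so the two sweeps run \emph{simultaneously} and the situation collapses back to the~(A)--(C) bound. Without Claim~\ref{claim:cornersLast} and this nested case analysis (including the simultaneous-sweep observation), the Condition~(F) bound does not go through.
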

\begin{proof}
We prove Theorem~\ref{theorem:Mnrecurrence} by induction on $k + \ell$. A small case analysis immediately gives the result for $\ell=1$ and for $(k,\ell) = (3,3)$. For $\ell = 2$ we use Lemma~\ref{lemma:kequals2}. Note that in all these cases there exist $(k, \ell)$-perfect initial sets for which, in Definition~\ref{def:perfectset} (of $(k,\ell)$-perfect sets), we have $r=0$. 

Now, assume that we are given $k, \ell \ge 3$ such that $(k, \ell) \neq (3,3)$. Our induction hypothesis is that for any $k', \ell' \ge 1$ such that $k'+\ell' < k+\ell$ there exists a $(k', \ell')$-perfect set $A^M(k', \ell')$ that percolates in time $M(k', \ell')$, as in the statement of Theorem~\ref{theorem:Mnrecurrence}. We continue the proof as follows. First, we show that $M(k,\ell)$ is at least the right-hand-side of equation~\eqref{equation:Mnrecurrence}. We do this by presenting seven constructions of sets percolating the $[k] \times [\ell]$ grid in times corresponding to the terms on the right-hand-side of~\eqref{equation:Mnrecurrence}. We use our induction hypothesis to estimate the percolation times of our constructions and to observe that these constructions are ``nearly'' $(k, \ell)$-perfect, i.e., that they satisfy all properties of $(k, \ell)$-perfect sets apart from, possibly, infecting a $[k] \times [\ell]$ grid in time $M(k,\ell)$.

We then move on to bounding $M(k,\ell)$ from above. Given any set $A$ percolating $[k] \times [\ell]$ we use Proposition~\ref{prop:rectangles} to show that $A$ must satisfy one of the six particular Conditions that impose upper bounds on the time that $A$ takes to percolate. We analyse these Conditions one by one, with the last one of them (i.e., with Condition~\ref{item:CondF}) requiring the most detailed analysis, and deduce that the right-hand-side of~\eqref{equation:Mnrecurrence} is an upper bound on $M(k,\ell)$. This implies that $M(k,\ell)$ indeed satisfies equation~\eqref{equation:Mnrecurrence} and that $(k, \ell)$-perfect sets exist for all values of $k$ and $\ell$.

Let $W(k,\ell)$ denote the right-hand-side of equation~\eqref{equation:Mnrecurrence}. With this notation, we want to show that $M(k,\ell) = W(k,\ell)$. We shall first prove that
\begin{equation}
\label{equation:Mngeq}
M(k,\ell) \geq W(k,\ell).
\end{equation}

Assume without loss of generality that $k \geq 4$. Recall that for $k', \ell' \ge 2$ by the definition of $(k', \ell')$-perfect sets we may assume that one of the corners of the rectangle spanned by $A^M(k', \ell')$ becomes infected at time $M(k', \ell')$. Now, consider the following seven ways of infecting $[k] \times [\ell]$ (see again Figures~\ref{figure:Moves123}~and~\ref{figure:Moves4567}), some of which we define only for slightly larger values of $k$ and $\ell$.

\begin{enumerate}
  \item \label{case:M1} Let $\closure{A^M(k-1, \ell-1)} = [k-1]\times [\ell-1]$. Since $k-1, \ell -1 \ge 2$ we may assume that $(k-1,\ell-1)$ becomes infected at time $M(k-1,\ell-1)$. Let $A^{(1)} = A^M(k-1, \ell -1) \cup \{(k, \ell)\}$. Then $A^{(1)}$ takes time $M(k-1,\ell-1) + \max\{k,\ell\} - 1$ to percolate.
  
  \item \label{case:M3} Let $\closure{A^M(k-2, \ell)} = [k-2]\times [\ell]$. Since $k-2, \ell \ge 2$ we may assume that $(k-2,\ell)$ becomes infected at time $M(k-2,\ell)$. Let $A^{(2)} = A^M(k-2, \ell) \cup \{(k, \ell)\}$. Then $A^{(2)}$ takes time $M(k-2,\ell) +\ell+1$ to percolate.
  
  \item \label{case:M2} For $\ell \ge 4$ we have $k, \ell-2 \ge 2$. Let $\closure{A^M(k, \ell-2)} = [k]\times [\ell-2]$. We may assume that $(k, \ell-2)$ becomes infected at time $M(k, \ell-2)$. Let $A^{(3)} = A^M(k, \ell-2) \cup \{(k, \ell)\}$. Then $A^{(3)}$ percolates in time $M(k,\ell-2) + k + 1$.
  
  \item \label{case:M5} Let $\closure{A^M(k-2, \ell-1)} = [k-2]\times [\ell-1]$. Since $k-2, \ell-1 \ge 2$ we may assume that $(k-2,1)$ becomes infected at time $M(k-2,\ell-1)$. Let $A^{(4)} = A^M(k-2, \ell-1) \cup \{(k, 1),(k, \ell)\}$. Then $A^{(4)}$ takes time $M(k-2,\ell-1)+k+\ell-2$ to percolate.
  
  \item \label{case:M4} For $\ell \ge 4$ we have $k-1, \ell-2 \ge 2$. Let $\closure{A^M(k-1, \ell -2)} = [k-1]\times [\ell-2]$. We may assume that $(1,\ell-2)$ becomes infected at time $M(k-1,\ell-2)$. Let $A^{(5)} = A^M(k-1, \ell -2) \cup \{(1, \ell),(k, \ell)\}$. Then $A^{(5)}$ takes time $M(k-1,\ell-2)+k +\ell - 2$ to percolate.
  
  \item \label{case:M6} For $\ell \geq 5$ we have $k, \ell-3 \geq 2$. Let $\closure{A^M(k, \ell-3)} = [k]\times [\ell-3]$ and assume that $(k,\ell-3)$ becomes infected at time $M(k,\ell-3)$. Let $A^{(6)} = A^M(k, \ell-3) \cup \{(k, \ell-1), (1, \ell)\}$. Then $A^{(6)}$ percolates in time $M(k,\ell-3)+2k -1$.
  
  \item \label{case:M7} For $k \geq 5$ an analogous construction to case (\ref{case:M6}) with a $(k-3,\ell)$-perfect set $A^M(k-3, \ell)$ spanning $[k-3]\times [\ell]$ in time $M(k-3, \ell)$. Taking $A^{(7)} = A^M(k-3, \ell)  \cup \{(k-1, \ell),(k, 1)\}$ we obtain a set spanning $[k] \times [\ell]$ in time $M(k-3,\ell)+2\ell-1$.
\end{enumerate}

The above constructions show that inequality~\eqref{equation:Mngeq} holds when $k, \ell \ge 5$. We now check that inequality~\eqref{equation:Mngeq} also holds for the small values of $k$ and $\ell$ for which some of these constructions cannot be applied. Constructions (\ref{case:M2}) and (\ref{case:M4}) do not apply when $\ell=3$ because we cannot ask for one of the corners of the smaller rectangles to become infected respectively at times $M(k, \ell-2) = 1$ and $M(k-1, \ell-2) = 1$. However, since $k \geq 4$, in these cases we have $M(k, \ell -2) +k +1 = k+2$ and $M(k-1, \ell -2) +k +\ell - 2 = k+2$ that is at most $M(k-1,\ell-1)+k-1 = \floor{\frac{3(k-2)}{2}} +k-1 \geq k + 2$.

Construction (\ref{case:M6}) does not apply for $\ell = 4$ since then we again cannot ask for one of the corners of $[k]\times [\ell-3]$ to become infected at time $M(k, \ell-3) = 1$. However, for $\ell=4$ we have $M(k, \ell-3) + 2k - 1 = 2k$ that is less than $M(k,\ell-2) + k + 1 = \floor{\frac{3(k-1)}{2}}+k+1 \geq \floor{\frac{2k+1}{2}}+k+1 = 2k+1$. Analogously we deal with the fact that construction (\ref{case:M7}) does not apply for $k = 4$. Thus the lower bound on $M(k,\ell)$ is proved.

For each of the sets $A^{(j)}$  constructed above, among the sites of $\closure{A^{(j)}}$ that become infected last there is a corner of $[k] \times [\ell]$. Thus it is clear that all sets $A^{(j)}$ satisfy properties (\ref{calRmax:a})-(\ref{calRmax:f}) of $(k,\ell)$-perfect sets except for, possibly, property (\ref{calRmax:c}). To finish the proof of Theorem~\ref{theorem:Mnrecurrence} we only need to prove the upper bound on $M(k,\ell)$ analogous to inequality \eqref{equation:Mngeq}. This will imply that at least one of the sets $A^{(j)}$ percolates in time $M(k, \ell)$ and therefore is $(k, \ell)$-perfect.

Hence, it remains to show that the right-hand-side of equation \eqref{equation:Mnrecurrence} is also an upper bound on $M(k,\ell)$, i.e., that
\begin{equation}
\label{equation:Mnleq}
M(k,\ell) \leq W(k,\ell).
\end{equation}

%\begin{equation}
%\label{equation:Mnleq}
%M(k,\ell) \leq \max
%  \begin{cases}
%    M(k-1,\ell-1) + \max\{k,\ell\} - 1,\\
%    M(k-2, \ell) + \ell+1,\\
%    M(k,\ell-2) + k+1,\\
%    M(k-2,\ell-1)+k+\ell-2,\\
%    M(k-1,\ell-2)+k+\ell-2,\\
%    M(k,\ell-3)+2k-1,\\
%    M(k-3,\ell)+2\ell-1.
%  \end{cases}
%\end{equation}

Let $A$ be any set that internally spans the rectangle $R = [k] \times [\ell]$ in time $M(k, \ell)$. Consider sets $A'$, $A''$ and rectangles $R'$, $R''$ satisfying conditions \ref{prop:rectangles:c1} and \ref{prop:rectangles:c2} of Proposition~\ref{prop:rectangles}. Define $T(R', R'')$ to be the time it takes to grow from $R' \cup R''$ to $R = \closure{R' \cup R''}$, that is, the time needed to infect all sites in $R \setminus (R' \cup R'')$ given that all sites in $R'$ and $R''$ are infected and no site in $R \setminus (R'\cup R'')$ is. Note that $T(R', R'')$ is a simple function that depends only on the dimensions of $R'$ and $R''$ and how they are positioned in $R$ but not on the underlying set $A$.
Let
\[
 S(R', R'') = \max\{M(R'), M(R'')\} + T(R', R'').
\]
Notice that $S(R', R'')$ also depends only on $R'$ and $R''$. It is clearly seen that for any choice of $A', A'' \subset A$ satisfying Proposition~\ref{prop:rectangles} we have that $S(R', R'')$ is an upper bound on the time that $A$ takes to percolate. We shall prove that, in order for $A$ to percolate in time $M(k, \ell)$, the rectangles $R'$ and $R''$ must be aligned according to (at least) one of the ways we describe below. In most cases a simple upper bound on $S(R', R'')$ will yield that $S(R', R'') \le W(k,\ell)$ and consequently that $M(k,\ell) \le W(k,\ell)$. However, in one particular case we might have $S(R', R'') > W(k,\ell)$. We will then need to be more careful and find an upper bound better than $S(R', R'')$ for the time that $A$ takes to percolate.

In the upcoming cases our technique of bounding $S(R', R'')$ will require the following claim saying that under our induction hypothesis the maximum percolation time is strictly increasing in the size of the underlying grid.

\begin{claim}
\label{claim:increaseby1}
Let $s$, $t$ be such that $s+t < k+\ell$. If $s \ge 1$ and $t \ge 2$ then $M(s+1,t) \ge M(s,t) + 1$. Similarly, if $s \geq 2$ and $t \ge 1$ then $M(s,t+1) \ge M(s,t) + 1$.
\end{claim}
{\par{\it Proof of Claim~\ref{claim:increaseby1}}. \ignorespaces}
Let $s \ge 1$ and $t \ge 2$. For $s = 1$, the result is trivial (as $M(2,2) \ge 1$ and $M(1,2) = 0$ and, for $t \geq 3$, $M(2,t) \ge 3$ and $M(1,t) = 1$). For $s, t \ge 2$, with $s+t < k +\ell$, by the induction hypothesis, we may assume that there exists a set $A^M(s,t)$ which internally spans the rectangle $[s] \times [t]$ in time $M(s,t)$ and, without loss of generality, such that
\[
 I_{A^M(s,t)}(s,t)=M(s,t) \geq 1.
\]
Note that we must have some $1 \leq i \leq t-1$ such that $(s, i) \in A^M(s,t)$. Let $i^*$ be the smallest such $i$. Let $\tilde{A} = A^M(s,t) \cup \{(s+1,i^*)\}$. Clearly $\closure{\tilde{A}} = [s+1] \times [t]$ and for any $j \in [t] \setminus \{i^*\}$ we have $I_{\tilde{A}}(s+1,j) \geq I_{A^M(s,t)}(s,j)+1$. Thus $M(s+1,t) \geq I_{\tilde{A}}(s+1,t) \geq M(s,t)+1$. \endproof

Assume without loss of generality that $M(R') \ge M(R'')$. Note that, in order to internally span $R$, the rectangles $R'$ and $R''$ must be at distance $0$, $1$ or $2$. Consider some minimal non-empty rectangle $\tilde{R}'' \subset R''$ such that $R' \cup \tilde{R}''$ spans $R$. Whenever $R'$ and $R''$ intersect, that is whenever $\dist(R',R'') = 0$, we can choose $\tilde{R}''$ so that it is disjoint from $R'$. Furthermore, whenever $\dist(R',R'') = 1$ then unless $R''$ has a side of length $1$ we can always choose $\tilde{R}''$ such that $\dist(R',\tilde{R}'')=2$. Since $T(R', R'') \le T(R', \tilde{R}'')$ and (by Claim \ref{claim:increaseby1}) $M(R') \ge M(R'') \geq M(\tilde{R}'')$, we have $S(R', R'') \le S(R', \tilde{R}'')$. Let $R' \in \Rec(s_1, t_1)$ and $\tilde{R}'' \in \Rec(s_2, t_2)$. With case analysis we find that, since $\tilde{R}''$ is chosen to be minimal, $R'$ and $\tilde{R}''$ must either satisfy one of the following conditions or their analogues obtained by swapping $k$ with $\ell$ (see 
Figure~\ref{figure:Cond}).

\begin{enumerate}[{Condition} A:]
  \item \label{item:CondB} rectangles $R'$ and $\tilde{R}''$ align as in Figure~\ref{figure:Cond}~(\ref{item:CondB}) with $s_1 + s_2 = k$, $t_1 + t_2 = \ell$.
  \item \label{item:CondA} rectangles $R'$ and $\tilde{R}''$ align as in Figure~\ref{figure:Cond}~(\ref{item:CondA}) with $s_1 + s_2 = k-1$ and $t_1 + t_2 = \ell+1$.
  \item \label{item:CondC} there is an $0 \leq m \leq \ell-1$ so that the rectangles $R'$ and $\tilde{R}''$ align as in Figure \ref{figure:Cond}~(\ref{item:CondC}) with $s_1 + s_2 = k-1$, $t_1 = \ell$ and $t_2 = 1$.
  \item \label{item:CondD} there is an $0 \leq m \leq \ell-t_1$ such that the rectangles $R'$ and $\tilde{R}''$ align as in Figure~\ref{figure:Cond} (\ref{item:CondD}) with $s_1 + s_2 = k-1$, $t_1<\ell$, $t_2=\ell$.
  \item \label{item:CondE} there is an $0 \leq m \leq \ell-t_1$ such that the rectangles $R'$ and $\tilde{R}''$ align as in Figure~\ref{figure:Cond} (\ref{item:CondE}) with $s_1 = k-1$, $s_2=1$, $t_1<\ell$, $t_2=\ell$.
  \item \label{item:CondF} there is an $0 \leq m \leq \ell-1$ such that the rectangles $R'$ and $\tilde{R}''$ align as in Figure \ref{figure:Cond} (\ref{item:CondF}) with $s_1 = k-1$, $s_2 = 1$, $t_1 = \ell$, $t_2 = 1$.
\end{enumerate}

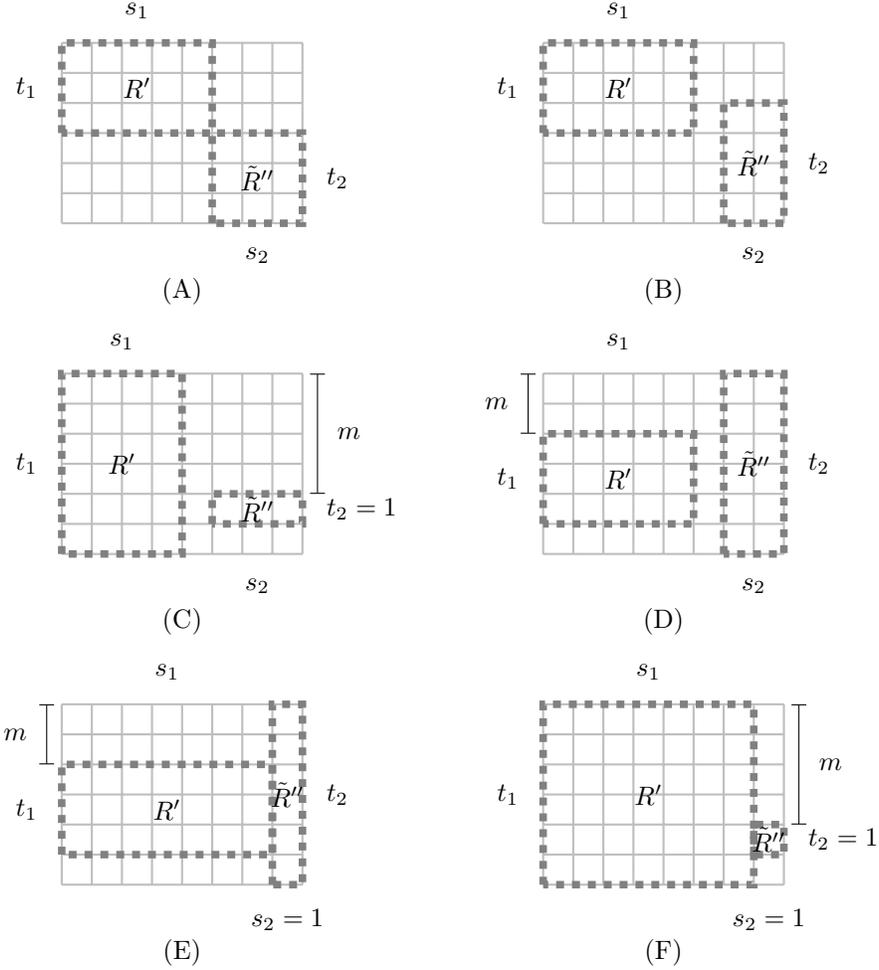
\begin{figure}[htb]
  \centering
    \begin{tikzpicture}[scale=\tikzscale]
  \begin{scope}[xshift=0cm,yshift=11cm]
      \draw[step=.5cm,lightgray,thick]
      (0,0) grid (4,3);
      \draw[line width=1mm,gray,dashed]
      (0,1.5) -- (0,3) -- (2.5,3) -- (2.5,1.5) -- (0,1.5);
      \draw[line width=1mm,gray,dashed]
      (2.5,0) -- (2.5,1.5) -- (4,1.5) -- (4,0) -- (2.5,0);
      \draw[black] (-0.25,2.25) node [left] {$t_1$};
      \draw[black] (4.25,0.75) node [right] {$t_2$};
      \draw[black] (1.25,3.25) node [above] {$s_1$};
      \draw[black] (3.25,-0.25) node [below] {$s_2$};
      \draw[black] (2,-0.75) node [below] {(\ref{item:CondB})};
      \draw[black] (1.25,2.25) node {$R'$};
      \draw[black] (3.25,0.75) node {$\tilde{R}''$};
  \end{scope}
  \begin{scope}[xshift=8cm,yshift=11cm]
      \draw[step=.5cm,lightgray,thick]
      (0,0) grid (4,3);
      \draw[line width=1mm,gray,dashed]
      (0,1.5) -- (0,3) -- (2.5,3) -- (2.5,1.5) -- (0,1.5);
      \draw[line width=1mm,gray,dashed]
      (3,0) -- (3,2) -- (4,2) -- (4,0) -- (3,0);
      \draw[black] (-0.25,2.25) node [left] {$t_1$};
      \draw[black] (4.25,1) node [right] {$t_2$};
      \draw[black] (1.25,3.25) node [above] {$s_1$};
      \draw[black] (3.5,-0.25) node [below] {$s_2$};
      \draw[black] (2,-0.75) node [below] {(\ref{item:CondA})};
      \draw[black] (1.25,2.25) node {$R'$};
      \draw[black] (3.5,1) node {$\tilde{R}''$};
  \end{scope}
  \begin{scope}[xshift=0cm,yshift=5.5cm]
    \draw[step=.5cm,lightgray,thick]
    (0,0) grid (4,3);
    \draw[line width=1mm,gray,dashed]
    (0,0) -- (0,3) -- (2,3) -- (2,0) -- (0,0);
    \draw[line width=1mm,gray,dashed]
    (2.5,0.5) -- (2.5,1) -- (4,1) -- (4,0.5) -- (2.5,0.5);
    \draw[black] (-0.25,1.5) node [left] {$t_1$};
    \draw[black] (4.25,0.75) node [right] {$t_2=1$};
    \draw [|-|, black] (4.25,1)  -- (4.25,3)
         node [black,midway,right=4pt] {$m$};
    \draw[black] (1,3.25) node [above] {$s_1$};
    \draw[black] (3.25,-0.25) node [below] {$s_2$};
    \draw[black] (2,-0.75) node [below] {(\ref{item:CondC})};
    \draw[black] (1,1.5) node {$R'$};
    \draw[black] (3.25,0.75) node {$\tilde{R}''$};
  \end{scope}
  \begin{scope}[xshift=8cm,yshift=5.5cm]
      \draw[step=.5cm,lightgray,thick]
      (0,0) grid (4,3);
      \draw[line width=1mm,gray,dashed]
      (0,0.5) -- (0,2) -- (2.5,2) -- (2.5,0.5) -- (0,0.5);
      \draw[line width=1mm,gray,dashed]
      (3,0) -- (3,3) -- (4,3) -- (4,0) -- (3,0);
      \draw[black] (-0.25,1.25) node [left] {$t_1$};
      \draw[black] (4.25,1.5) node [right] {$t_2$};
      \draw[black] (1.25,3.25) node [above] {$s_1$};
      \draw[black] (3.5,-0.25) node [below] {$s_2$};
    \draw[black] (2,-0.75) node [below] {(\ref{item:CondD})};
    \draw [|-|, black] (-0.25,2)  -- (-0.25,3)
       node [black,midway,left=4pt] {$m$};
    \draw[black] (1.25,1.25) node {$R'$};
    \draw[black] (3.5,1.5) node {$\tilde{R}''$};
  \end{scope}
  \begin{scope}[xshift=0cm,yshift=0cm]
      \draw[step=.5cm,lightgray,thick]
      (0,0) grid (4,3);
      \draw[line width=1mm,gray,dashed]
      (0,0.5) -- (0,2) -- (3.5,2) -- (3.5,0.5) -- (0,0.5);
      \draw[line width=1mm,gray,dashed]
      (3.5,0) -- (3.5,3) -- (4,3) -- (4,0) -- (3.5,0);
      \draw[black] (-0.25,1.25) node [left] {$t_1$};
      \draw[black] (4.25,1.5) node [right] {$t_2$};
      \draw[black] (1.75,3.25) node [above] {$s_1$};
      \draw[black] (3.75,-0.25) node [below] {$s_2=1$};
    \draw[black] (2,-0.75) node [below] {(\ref{item:CondE})};
    \draw [|-|, black] (-0.25,2)  -- (-0.25,3)
       node [black,midway,left=4pt] {$m$};
    \draw[black] (1.75,1.25) node {$R'$};
    \draw[black] (3.75,1.5) node {$\tilde{R}''$};
  \end{scope}
  \begin{scope}[xshift=8cm,yshift=0cm]
    \draw[step=.5cm,lightgray,thick]
    (0,0) grid (4,3);
    \draw[line width=1mm,gray,dashed]
    (0,0) -- (0,3) -- (3.5,3) -- (3.5,0) -- (0,0);
    \draw[line width=1mm,gray,dashed]
    (3.5,0.5) -- (3.5,1) -- (4,1) -- (4,0.5) -- (3.5,0.5);
    \draw[black] (-0.25,1.5) node [left] {$t_1$};
    \draw[black] (4.25,0.75) node [right] {$t_2=1$};
    \draw[black] (1.75,3.25) node [above] {$s_1$};
    \draw[black] (3.75,-0.25) node [below] {$s_2=1$};
    \draw[black] (2,-0.75) node [below] {(\ref{item:CondF})};
    \draw [|-|, black] (4.25,1)  -- (4.25,3)
       node [black,midway,right=4pt] {$m$};
    \draw[black] (1.75,1.5) node {$R'$};
    \draw[black] (3.75,0.75) node {$\tilde{R}''$};
  \end{scope}
  \end{tikzpicture}
  \caption{The alignments of rectangles $R'$ and $\tilde{R}''$ that need to be considered.}
  \label{figure:Cond}
\end{figure}

Before we proceed let us note that by Fact \ref{fact:n_necessary} neither Condition~\ref{item:CondD},~\ref{item:CondE} nor~\ref{item:CondF} could occur if we had $|A| = n$. The first two do not cause us many additional complications and we can deal with them straightforwardly. However, as we shall see, analysing Condition~\ref{item:CondF} is the crucial part of the proof of inequality \eqref{equation:Mnleq}. The possibility of this scenario is what makes the recursive relation for $M(k,\ell)$ much more complicated than the one in \cite{benevidesprzykucki02}. Additionally, it has strong implications when we later try to find an explicit solution to the recursion.

Let us analyse the possible cases one by one. Assume first that Condition~\ref{item:CondB} holds. Note that, in this case,
\[
S(R',\tilde{R}'') =  M(R') + \max\{s_1+t_2-1, s_2+t_1-1\}.
\]

It is easy to check that $S(R',\tilde{R}'')$ cannot decrease if we ``extend'' the rectangle $R'$ and ``shrink'' $\tilde{R}''$. In fact, when $\max\{s_1, t_1\} \ge 2$ then we can use Claim \ref{claim:increaseby1} and so, for any $i < s_2$ and $j < t_2$, we have $M(s_1 + i, t_1+j) \geq M(s_1, t_1) + i+j$. Together with 
\[
 \max\{(s_1+i)+(t_2-j)-1, (s_2-i)+(t_1+j)-1\} \geq \max\{s_1+t_2-1, s_2+t_1-1\} - \max\{i,j\},
\]
we conclude that the largest value of $S(R',\tilde{R}'')$ is given when $\tilde{R}''$ is a single site. Therefore, $S(R', \tilde{R}'') \le M(k-1,\ell-1) + \max\{k,\ell\} - 1$. When $\max\{s_1, t_1\} = 1$ then $R'$ is a single site. Since we assume $M(R') \geq M(\tilde{R}'')$ we would require $\tilde{R}'' \in \Rec(1,1) \cup \Rec(1,2) \cup \Rec(2,1)$. This yields $\max\{k,\ell\} \leq 3$ which contradicts our assumption that $k, \ell \ge 3$ with $(k, \ell) \neq (3,3)$.

Now, assume that Condition~\ref{item:CondA} (or its analogue with $k$ and $\ell$ swapped) holds. Observe that in this case
\begin{equation*}
 S(R', \tilde{R}'') = \begin{cases}
    M(R') + \max\{s_1+t_2, s_2+t_1\},     & \text{if } t_1, t_2 \geq 2, \\
    M(R') + s_2 +t_1,     & \text{if } t_2 = 1, \\
    M(R') + s_1 +t_2,     & \text{if } t_1 = 1.
  \end{cases}
\end{equation*}

If $t_1, t_2 \ge 2$ then it is easy to reduce this case to the previous one: by Claim \ref{claim:increaseby1} we have $M(s_1 + 1, t_1) \geq M(s_1, t_1) + 1$, while
\[
 \max\{(s_1 + 1) + (t_2 - 1) - 1, s_2 + t_1 - 1\} = \max \{s_1 + t_2, s_2 + t_1\} - 1.
\]
Putting these inequalities together we have $S(R', \tilde{R}'') \le S(R^+, R^-)$ where $R^+ \in \Rec(s_1+1,t_1)$ and $R^- \in \Rec(s_2,t_2-1)$, and where $R^+, R^-$ satisfy Condition~\ref{item:CondB}. If $t_2 = 1$, then $t_1 \ge 3$ (recall, $k, \ell \ge 3$). Hence, as in the case of Condition~\ref{item:CondB}, we can use Claim~\ref{claim:increaseby1} and extend $R'$ rightwards to bound $S(R', \tilde{R}'')$ from above using the case where $\tilde{R}''$ is a single site and obtain $S(R', \tilde{R}'') \le M(k-2, \ell) + \ell+1$. Note that swapping $k$ and $\ell$ gives the bound $S(R', \tilde{R}'') \le M(k, \ell-2) + k+1$.

Finally, if $t_1 = 1$ then $t_2 \ge 3$ and, since $M(R') \geq M(\tilde{R}'')$, also $s_2 = 1$. In this case all corners of $R'$ and $\tilde{R}''$ must be initially infected and we can improve the bound $S(R', \tilde{R}'') = M(R') + s_1 + t_2$ to $s_1 +t_2 = k+\ell-2$. Then, $R$ becomes infected after at most $k+\ell-2$ steps which is not more than $M(k-1, \ell-1) + \max\{k,\ell\}-1$ for all $k, \ell \geq 3$.

Suppose now that Condition~\ref{item:CondC} holds. Note that, for a fixed $R'$ and given $m$, we have $S(R', \tilde{R}'') = M(R') + \max\{m+s_2+1, t_1-m+s_2\}$ which is maximum when $m=0$ or $m = t_1-1$ and this case is equivalent to Condition~\ref{item:CondA} with $t_2 = 1$.

Hence we see that
\begin{equation}
\label{eq:boundABC}
  \max \begin{cases}
 M(k-1,\ell-1) + \max\{k,\ell\} - 1,\\
 M(k-2, \ell) + \ell+1,\\
 M(k,\ell-2) + k+1,
 \end{cases} 
\end{equation}
is the maximum percolation time in $[k] \times [\ell]$ when we restrict ourselves to rectangles $R'$ and $\tilde{R}''$ that satisfy Conditions \ref{item:CondB}, \ref{item:CondA} or \ref{item:CondC}.

We shall prove that the same bound applies when either Condition \ref{item:CondD} or \ref{item:CondE} holds, reducing the analysis of those cases to one of Conditions \ref{item:CondB}, \ref{item:CondA} or \ref{item:CondC}. Thus, consider the case when Condition~\ref{item:CondD} applies to $R'$ and $\tilde{R}''$. Recall that $M(R') \geq M(\tilde{R}'')$. Given $m$ we have
\[
 S(R',\tilde{R}'') = M(R') + \max\{s_1+m+1, s_1+t_2-m-t_1+1\}.
\]
that attains its maximum when $m=0$ or $m = t_2 - t_1$. However, for these values of $m$ we could further shrink $\tilde{R}''$ by setting $t_2 = \ell-t_1+1$ and hence reducing this case to the one where Condition~\ref{item:CondA} holds. (If $m=0$ and $t_1 = 1$ then $R_1$ and $\tilde{R}''$ already satisfy both Condition~\ref{item:CondD} and~\ref{item:CondA}.) %%NOTE: we do not get a direct contradiction on the minimality of \tilde{R}'' because we moved R' by setting m=0... so we are not talking about the same R' and \tilde{R}'' who were chosen originally.

We deal with $R'$ and $R''$ satisfying Condition~\ref{item:CondE} in an analogous way, bounding $S(R',\tilde{R}'')$ from above by taking $m=0$ and then reducing it to the case where Condition~\ref{item:CondB} is satisfied.

Finally let us consider the case where Condition \ref{item:CondF}, or its version with $k$ and $\ell$ swapped, applies to $R'$ and $\tilde{R}''$. In this case we need to be more careful: using similar arguments as before we can only conclude that 

\begin{equation} \label{eq:simpleFbound}
 S(R',\tilde{R}'') =
 \begin{cases}
    M(R') + \max\{m, \ell-m-1\} \le M(R')+\ell-1, & \text{if $R' \in \Rec(k-1, \ell)$}, \\
    M(R') + \max\{m, k-m-1\} \le M(R')+k-1,     & \text{if $R' \in \Rec(k, \ell-1)$}.
  \end{cases}
\end{equation}
Unfortunately this bound is not good enough to prove inequality \eqref{equation:Mnleq}. To improve it we need to analyse how the proximity of $\tilde{R}''$ affects the infection process inside $R'$.

Recall that we initially chose $R'$ and $R''$ together with $A', A'' \subsetneq A$ spanning them according to Proposition~\ref{prop:rectangles}. We later chose $\tilde{R}'' \subset R''$ and we assumed that Condition~\ref{item:CondF} applies to $R'$ and $\tilde{R}''$. However, when $R' = [k-1] \times [\ell]$ then we see that $A''$ must contain a site of the form $(k,i)$ for some $1 \le i \le \ell$ (this site is of course disjoint from $R'$). This is because $R'$ and $R''$ together span~$R$. Thus we in fact can assume that $R'$ (internally spanned by $A'$) and $R''$ (which is a single site) satisfy Condition~\ref{item:CondF} (ignoring the influence of some sites in $A''$ could not decrease the percolation time of $A$).

To continue the analysis of this case we shall need the following claim.

\begin{claim} \label{claim:cornersLast}
Let $A$ be a set of sites percolating in $R = [k] \times [\ell]$ where $k, \ell \geq 2$. Then for any site $(i,j) \in R \setminus \{(1,1),(1,\ell),(k,1),(k,\ell)\}$ we have $I_A(i,j) \leq M(k,\ell)-1$.
\end{claim}
{\par{\it Proof of Claim~\ref{claim:cornersLast}}. \ignorespaces}
It is enough to prove the claim for all percolating sets minimal under containment (as for any $A \subset B$ we have $I_B(i,j) \le I_A(i,j)$ for all $i, j$). Let $A$ be such a set. Applying Proposition \ref{prop:rectangles} to $R$ and $A$ we obtain sets $A'$ and $A''$ that partition $A$ and internally span two rectangles $R', R'' \subsetneq R$ such that $\closure{R' \cup R''} = R$. Note that, by the minimality of $A$, the set $R \setminus (R' \cup R'')$ contains no initially infected sites.

If $k=\ell=2$ then all sites in $[k] \times [\ell]$ are corners and the claim is trivial. If, without loss of generality, $k>2$ then $M(k,\ell)>1$. By Claim \ref{claim:increaseby1} we have $\max\{M(R'),M(R'')\}<M(k,\ell)$. Hence for any $(i,j) \in R' \cup R''$ we have $I_A(i,j)\leq \max\{M(R'),M(R'')\}<M(k,\ell)$.
Now, let
\[
B = R \setminus \left( R' \cup R'' \cup \{(1,1),(1,\ell),(k,1),(k,\ell)\}\right).
\]
If $\{(1,1),(1,\ell),(k,1),(k,\ell)\} \subset R' \cup R''$ and $B \neq \emptyset$ then $\Phi(R'), \Phi(R'') \leq k+\ell-2$ (see Figure \ref{figure:cornersOccupied}) and therefore by Claim \ref{claim:increaseby1} we have $M(R'),M(R'') \leq M(R)-2$ and hence for any $(i,j) \in B$ we have $I_A(i,j) \leq M(k,\ell)-1$. Thus we may assume that $R \setminus (R' \cup R'')$ contains some corner site of $R$. Let $(i,j)$ be any site of $B$. We consider the two following cases:
\begin{itemize}
 \item If $\dist(R',R'')=2$ then $M(R'),M(R'') \leq M(R)-2$. Thus, if we have $\dist((i,j),R') = \dist((i,j),R'') = 1$ then
\[
 I_A(i,j) \leq \max\{M(R'),M(R'')\} + 1 \leq M(k,\ell)-1.
\]
 \item If either $\dist(R',R'')=2$ and $\dist((i,j),R') \neq 1$ or $\dist((i,j),R'') \neq 1$, or if $\dist(R',R'') \neq 2$, then no matter how the rectangles $R'$ and $R''$ are aligned we can find a corner site $(k',\ell') \in R \setminus (R' \cup R'')$ such that to infect $(k',\ell')$ in the process we need to infect $(i,j)$ first. This follows from the fact that the rectangular region in $R \setminus (R' \cup R'')$ that contains $(k',\ell')$ becomes infected starting from its own corner opposite $(k',\ell')$. Thus $I_A(i,j) < I_A(k',\ell') \leq M(k,\ell)$.
\end{itemize}
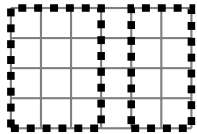
\begin{figure}[ht]
    \centering
    \begin{tikzpicture}[scale=\tikzscale]
      \draw[step=.5cm,gray,thick]
      (0,0) grid (3,2);
      \draw[line width=1mm,black,dashed]
      (0,0) -- (1.5,0) -- (1.5,2) -- (0,2) -- (0,0);
      \draw[line width=1mm,black,dashed]
      (2,0) -- (3,0) -- (3,2) -- (2,2) -- (2,0);
    \end{tikzpicture}
    \caption{The alignment of $R'$ and $R''$ containing all $4$ corner sites}  
    \label{figure:cornersOccupied}
\end{figure}
This completes the proof of the claim.
\endproof

An important consequence of Claim \ref{claim:cornersLast} is that when rectangles $R'$ and $R''$ in $R$ satisfy Condition~\ref{item:CondF} then, no matter how we locate $R''$ in $R$, the infection of $R \setminus (R' \cup R'')$ starts at the latest at time $M(R')-1$. This improves the bound on the time that $A$ takes to percolate given in inequality \eqref{eq:simpleFbound} to
\begin{equation} \label{eq:yetsimpleFbound}
 S(R', R'') \le \max
			 \begin{cases}
	    		M(R')+\ell-2, & \text{if $R' \in \Rec(k-1, \ell)$} \\
	    		M(R')+k-2     & \text{if $R' \in \Rec(k, \ell-1)$} \\
		  	 \end{cases}
\end{equation}

To finish the proof of Theorem \ref{theorem:Mnrecurrence} we apply Proposition \ref{prop:rectangles} to $R'$ (we can do this as $k,\ell \geq 3$ and $R''$ is a single site). So let $A'$ be partitioned into sets $A'_1$ and $A'_2$ spanning rectangles $R'_1$ and $R'_2$ respectively, satisfying Proposition \ref{prop:rectangles}. Assume that $M(R'_1) \geq M(R'_2)$.

If $R'_1$ and $R'_2$ satisfy Condition~\ref{item:CondF} inside $R'$, with $R'_2$ being a single site, then we can bound the (total) time that $A$ takes to percolate in a much better way than using inequality \eqref{eq:yetsimpleFbound}. In fact, considering the possible cases it can, again, be bounded from above by \eqref{eq:boundABC}. This follows from the fact that $\dist(R_1', R'') \le 2$ and therefore, with $R_1'$ fully infected, the processes of infecting $R' \setminus (R'_1 \cup R'_2)$ and $\closure{R'_1 \cup R''} \setminus (R'_1 \cup R'')$ run simultaneously.

In the remainder we assume that $R_1'$ and $R_2'$ satisfy one of the conditions (\ref{item:CondB})-(\ref{item:CondE}) in $R'$ and we improve the bound \eqref{eq:yetsimpleFbound} by replacing $M(R')$ with a better bound on the time that $A'$ takes to percolate in $R'$.

If $R'_1$ and $R'_2$ satisfy Condition~\ref{item:CondA} or~\ref{item:CondC} in $R'$ then, by what we already know about the bounds for these conditions (i.e., that the upper bound on $M(R')$ is the weakest when $R'_2$ is a single site, see \eqref{eq:boundABC} with the dimensions of $R'$ in place of $k$ and $\ell$), the bound in \eqref{eq:yetsimpleFbound} is at most
\begin{equation*}
\max
  \begin{cases}
    M(k-2,\ell-1) + k+\ell-2,\\
    M(k-1,\ell-2) + k+\ell-2,\\
    M(k,\ell-3) + 2k-1,\\
    M(k-3,\ell) + 2\ell-1.
  \end{cases}
\end{equation*}

If $R'_1 \in \Rec(s_1,t_1)$ and $R'_2 \in \Rec(s_2,t_2)$ inside $R'$ satisfy Condition~\ref{item:CondB} then $R''$, $R'_1$ and $R'_2$ are (up to simple rotations), for some $m \leq t_1+t_2-1$, mutually aligned as in Figure \ref{figure:CondBF} (where $R''$ is depicted with a shaded square).
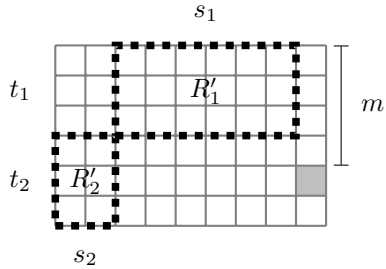
\begin{figure}[ht]
  \centering
    \begin{tikzpicture}[scale=\tikzscale]
      \fill[color=lightgray]  (4,0.5) rectangle +(0.5,0.5);
      \draw[step=.5cm,gray,thick]
      (0,0) grid (4.5,3);
      \draw[line width=1mm,black,dashed]
      (0,0) -- (1,0) -- (1,1.5) -- (0,1.5) -- (0,0);
      \draw[line width=1mm,black,dashed]
      (1,1.5) -- (4,1.5) -- (4,3) -- (1,3) -- (1,1.5);
      \draw[black] (-0.25,2.25) node [left] {$t_1$};
      \draw[black] (-0.25,0.75) node [left] {$t_2$};
      \draw[black] (2.5,3.25) node [above] {$s_1$};
      \draw[black] (0.5,-0.25) node [below] {$s_2$};
      \draw [|-|, black] (4.75,1)  -- (4.75,3)
         node [black,midway,right=4pt] {$m$};
      \draw[black] (2.5,2.25) node {$R'_1$};
      \draw[black] (0.5,0.75) node {$R'_2$};
    \end{tikzpicture}
    \caption{Condition~\ref{item:CondB} followed by Condition~\ref{item:CondF}}
    \label{figure:CondBF}
\end{figure}

Let us analyse the possible cases conditioning on the value of $t_2$. If $t_2 = 1$ then we have $\dist(R'_1, R'') \leq 2$ so the infection of $\closure{R'_1 \cup R''} \setminus (R'_1 \cup R'')$ starts at the latest at time $M(R'_1)$. As before, by Claim \ref{claim:increaseby1}, we can shrink $R''_2$, and so the upper bound on percolation time is maximized for $s_2=1$. In this case the largest bound on $M(k,\ell)$ is achieved when $m=0$ and is equal to
\begin{equation*}
\max
  \begin{cases}
    M(k-1,\ell-2) + \max\{k,\ell\}-1\\
    M(k-2,\ell-1) + \max\{k,\ell\}-1
  \end{cases}
 < M(k-1,\ell-1) + \max\{k,\ell\}-1.
\end{equation*}

If $t_2 > 1$ then, by Claim \ref{claim:increaseby1} and Claim \ref{claim:cornersLast}, the bound on percolation time is maximized either for $t_2=2$, $s_2=1$ and $m=t_1+t_2-1$ which as the upper bound on $M(k,\ell)$ gives
\begin{equation*}
    M(k-2,\ell-2) + k + \ell-3 \leq M(k-2,\ell-1) + k + \ell-2,
\end{equation*}
or for $s_1=2$, $t_1=1$ and $m=t_1+t_2-1$ which as the upper bound gives
\begin{equation*}
\max
  \begin{cases}
    M(k-1,\ell-3) + 2k - 2 \\ 
    M(k-3,\ell-1) + 2 \ell -2
  \end{cases}
 \leq \max
  \begin{cases}
    M(k,\ell-3) + 2k - 1 \\ 
    M(k-3,\ell) + 2 \ell -1
  \end{cases},
\end{equation*}
or for $s_1=1$, $t_1=1$ and $m=t_1+t_2-1$ which as the upper bound gives
\begin{equation*}
\max
  \begin{cases}
    M(k-1,\ell-2) + \max\{k+1,\ell-2\} \\ 
    M(k-2,\ell-1) + \max\{k-2,\ell+1\}
  \end{cases} \leq \max
  \begin{cases}
    M(k-1,\ell-2) + k+\ell-2 \\ 
    M(k-2,\ell-1) + k+\ell-2
  \end{cases}.
\end{equation*}
Thus the upper bound on the percolation time of $A$ obtained when Condition~\ref{item:CondB} holds for $R_1', R_2'$ inside $R'$ is at most $W(k,\ell)$, i.e., the maximum in the right-hand-side of equation \eqref{equation:Mnrecurrence}.

Finally, if $R'_1$ and $R'_2$ inside $R'$ satisfy Condition~\ref{item:CondD} or~\ref{item:CondE} with $M(R'_1) \geq M(R'_2)$ then, as already noted, by setting $m=0$ and shrinking $R'_2$ we can bound from above the percolation time of $A'$ by the bounds obtained under conditions~\ref{item:CondB} and~\ref{item:CondA}. That completes the proof of the upper bound on $M(k, \ell)$ and of Theorem~\ref{theorem:Mnrecurrence}.
\end{proof}

\begin{remark}
 Relation \eqref{equation:Mnrecurrence} does not allow us to immediately give an exact formula for $M(n)$. However, with the use of a computer we can evaluate $M(n)$ and at the same time find an $(n,n)$-perfect set. Our simulations suggest that these sets have size approximately $\frac{23n}{18}+O(1)$ (for example, for $n=1000$ it is $1277$). In the next section we find the asymptotic formula for $M(n)$. For the lower bound we shall use sets similar to those suggested by our computations.
\end{remark}

\section{Computing the asymptotic value of $M(n)$}
\label{sec:valueMnn}
In this section we use the existence of $(n,n)$-perfect sets to compute the asymptotic value of $M(n)$. We say that a $(k,\ell)$-perfect set $A$ together with the sequence of rectangles $P_0 \subset P_1 \subset  \ldots \subset  P_r \in \Rec(k,\ell)$ associated with it are \textit{described by a triple} $(s_0, t_0, m_1 m_2 \ldots m_r)$ if $P_0 \in \Rec(s_0, t_0)$ and additionally, for $1\le i \le r$, Move~$m_i$ is used to obtain $P_{i}$ from $P_{i-1}$. We write $T_0 = M(P_0)$ and, for $i \ge 1$, we denote by $T_i$ the additional time it takes to infect the sites of $P_i$ after all sites of $P_{i-1}$ are infected. We say that $T_0, T_1,\ldots, T_r$ is the \emph{time sequence} of $A$. Finally, we say that a triple $(s_0, t_0, m_1 m_2 \ldots m_r)$ is a \emph{scheme} solving $M(k,\ell)$ if it describes a $(k,\ell)$-perfect set.

Note that a triple $(s_0, t_0, m_1 m_2 \ldots m_r)$ may describe multiple $(n,n)$-perfect sets since it only determines the dimensions of the rectangles $P_i$ but not their precise coordinates. Nevertheless, all $(n,n)$-perfect sets described by $(s_0, t_0, m_1 m_2 \ldots m_r)$ have the same time sequence. Note that if $T_0, T_1,\ldots, T_r$ is a time sequence of an $(n,n)$-perfect set then $M(n) = \sum_{i=0}^r T_i$.

\begin{observation}\label{obs:minimalityprinciple} Let $(s_0, t_0, m_1 m_2 \ldots m_r)$ be a scheme and $P_0 \subset P_1 \subset  \ldots \subset  P_r$ be the sequence of rectangles generated by it. Then for any $1 \le j \le r$ the triple $(s_0, t_0, m_1 m_2 \ldots m_j)$ is a scheme. In particular, it describes a set that percolates $P_j$ in maximum time.
\end{observation}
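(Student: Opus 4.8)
The plan is to realise the truncated scheme by an explicit set, namely the restriction to $P_j$ of any set realising the full scheme. Let $A$ be a $(k,\ell)$-perfect set described by the scheme $(s_0,t_0,m_1 m_2\ldots m_r)$, with associated nested sequence $P_0\subset P_1\subset\ldots\subset P_r$, and write $P_j\in\Rec(s_j,t_j)$. I would set $A'=A\cap P_j$ and show that $A'$, together with the truncated sequence $P_0\subset P_1\subset\ldots\subset P_j$, is a $(s_j,t_j)$-perfect set described by the triple $(s_0,t_0,m_1\ldots m_j)$. This is exactly the assertion that $(s_0,t_0,m_1\ldots m_j)$ is a scheme (solving $M(s_j,t_j)=M(P_j)$), and the ``in particular'' clause then follows immediately, since property~(\ref{calRmax:c}) read at index $j$ says that $A'$ percolates $P_j$ in time $M(P_j)$.

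The argument hinges on the trivial identity $A'\cap P_i=A\cap P_i$, valid for every $0\le i\le j$ because $P_i\subseteq P_j$; granted this, one checks the clauses of Definition~\ref{def:perfectset} one by one with $r$ replaced by $j$. Properties~(\ref{calRmax:a}) and~(\ref{calRmax:b}) refer only to the dimensions $s_0,t_0,s_1,t_1$ and to the move labels with $1\le i\le j$, and so are inherited verbatim from the full scheme (note $j\ge 1$, so $P_1$ is present). Property~(\ref{calRmax:c}) for $A'$ at an index $i\le j$ demands that $A'\cap P_i$ internally span $P_i$ in time $M(P_i)$; rewriting $A'\cap P_i$ as $A\cap P_i$ turns this into property~(\ref{calRmax:c}) of the full scheme at index $i$, and the case $i=j$ gives $\closure{A'}=P_j$ with percolation time $M(P_j)$. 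Finally, properties~(\ref{calRmax:d}),~(\ref{calRmax:e}) and~(\ref{calRmax:f}) are local to the rectangles $P_i$ with $i\le j$: the ``last corner'' statements in~(\ref{calRmax:d}) depend only on the infection process inside $P_i$ started from $A\cap P_i=A'\cap P_i$, and the designated sites $v_{i-1}$ (and, when present, $w_{i-1}$) supplied by~(\ref{calRmax:e}) and~(\ref{calRmax:f}) lie in $P_i\subseteq P_j$, so they already belong to $A\cap P_j=A'$.

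The one point deserving a moment's care -- and essentially the only place an argument could slip -- is the claim that those designated sites lie inside $P_j$. I would justify it directly: in each case $P_{i-1}$ together with the designated site(s) internally spans $P_i$, and a closure equal to $P_i$ cannot contain a point outside $P_i$; hence every designated site lies in $P_i$, and thus in $P_j$. With that established, all clauses of Definition~\ref{def:perfectset} hold for $A'$ and the sequence $P_0\subset\ldots\subset P_j$, so $A'$ is $(s_j,t_j)$-perfect and $(s_0,t_0,m_1\ldots m_j)$ is a scheme. Since everything apart from this single localisation observation is a verbatim transcription of the hypotheses, no genuine difficulty arises -- which is precisely why the statement is recorded as an observation.
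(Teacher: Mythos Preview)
Your proof is correct, and since the paper states this as an Observation without supplying any argument, your write-up is exactly the sort of routine verification the reader is implicitly asked to carry out. The key device---restricting a $(k,\ell)$-perfect set $A$ to $P_j$ and using $A'\cap P_i=A\cap P_i$ for $i\le j$ to transfer each clause of Definition~\ref{def:perfectset}---is the natural and essentially unique way to see it, and your localisation of the designated sites $v_{i-1},w_{i-1}$ to $P_i$ via the spanning condition is the one point genuinely worth isolating.
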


\begin{remark}\label{rem:smallMnn} In Appendix~\ref{appendix} we consider a number of small cases and show that for any $k, \ell \geq 3$, $(k,\ell) \neq (3,3)$, there exists a scheme $(s_0, t_0, m_1 m_2 \ldots m_r)$ that solves $M(k,\ell)$ and is such that either $s_0 \ge 3$ and $t_0 = 2$ or $s_0 = 2$ and $t_0 \geq 3$.
\end{remark}

Let $a$, $b$ be natural numbers and let $x_1 \ldots x_a$ and  $y_1\ldots y_b$ be sequences of moves. We say that these sequences are compatible if applying moves $x_1 \ldots x_a$ to a certain rectangle $R$ yields a rectangle with the same dimensions as when applying moves $y_1\ldots y_b$ to $R$. For example, for any $1\le i, j \le 7$, the sequence $ij$ is compatible with $ji$, the sequence $61$ is compatible with $35$, the sequence $111$ is compatible with $45$, but $12$ is not compatible with $13$ (because the order of dimensions of the resulting rectangle matters).

Fix $1 \le i \le r$ and let $P_i \in \Rec(k, \ell)$. Clearly the value of $T_i$ depends only on $k$, $\ell$ and $m_i$. We list its possible values in Table~\ref{table1} (see also equation \eqref{equation:Mnrecurrence}). For $2 \le i \le r$, applying this argument twice, we can compute the value of $T_{i} + T_{i-1}$ as a function of $k$, $\ell$, $m_i$ and $m_{i-1}$ only. In Table~\ref{table2} we list the values of $T_{i} + T_{i-1}$ for $m_i, m_{i-1} \in \{2, 3, 4, 5, 6, 7\}$ and in Table~\ref{table3} we list the values of $T_{i} + T_{i-1}$ when either $m_i = 1$ or $m_{i-1} = 1$.

\begin{figure}[htb]
\centering
\begin{floatrow}
\capbtabbox{%
\begin{tabular}{|l|c|c|}
    \hline
    $m_i$ & $P_{i-1}$ & $T_i$ \\
    \hline
      & & \\[-1em]
    \hline
    1 & $(k-1,\ell-1)$ & $\max\{k,\ell\}-1$ \\
    2 & $(k-2,\ell)$   & $\ell+1$  \\
    3 & $(k,\ell-2)$   & $k+1$ \\
    4 & $(k-2,\ell-1)$ & $k+\ell-2$ \\
    5 & $(k-1,\ell-2)$ & $k+\ell-2$ \\
    6 & $(k,\ell-3)$   & $2k-1$ \\
    7 & $(k-3,\ell)$   & $2\ell-1$ \\
    \hline
  \end{tabular}
}{%
  \caption{Dimensions of $P_{i-1}$ and value of $T_{i}$ given~$m_i$, assuming that $P_i \in \Rec(k, \ell)$.}
  \label{table1} 
}
\ffigbox{%
  \begin{tikzpicture}[scale=\tikzscale]
  
  \draw[->] (0,-0.5) -- (0,3.5) node[above left] {$\ell$};
  \draw[->] (-0.5,0) -- (3.5,0) node[below right] {$k$};

  \draw[step=1cm,gray,dotted] (0,0) grid (3,3);

  \draw[->, line width=0.5mm] (0,0) -- (1,1) node[above right] {$1$};
  \draw[->, line width=0.5mm] (0,0) -- (2,1) node[right] {$4$};    
  \draw[->, line width=0.5mm] (0,0) -- (1,2) node[above] {$5$};
  \draw[->, line width=0.5mm] (0,0) -- (2,0) node[below left] {$2$};
  \draw[->, line width=0.5mm] (0,0) -- (3,0) node[below left] {$7$};
  \draw[->, line width=0.5mm] (0,0) -- (0,2) node[below left] {$3$};
  \draw[->, line width=0.5mm] (0,0) -- (0,3) node[below left] {$6$};
  \end{tikzpicture}
}{%
  \caption{Direction of each Move.}
  \label{figure:movesdirections}
}
\end{floatrow}
\end{figure}

\begin{table}[ht]\centering
  \begin{tabular}{|c||c|c|c|} 
    \hline
       & $m_i = 2$ & $m_i = 3$ & $m_i = 4$ \\
    \hline
       & & & \\[-1em]
    \hline
    $m_{i-1} = 2$ & $2\ell+2$  & $k+\ell$ & $k+2\ell-2$ \\
    \hline
    $m_{i-1} = 3$ & $k+\ell$ & $2k+2$ & $2k+\ell-3$ \\
    \hline
    $m_{i-1} = 4$ & $k+2\ell-3$ & $2k+\ell-3$ & $2k+2\ell-7$ \\
    \hline
    $m_{i-1} = 5$ & $k+2\ell-3$ & $2k+\ell-3$ & $2k+2\ell-7$ \\
    \hline
    $m_{i-1} = 6$ & $2k+\ell-4$ & $3k$ & $3k+\ell-7$ \\
    \hline
    $m_{i-1} = 7$ & $3\ell$ & $k+2\ell-4$ & $k+3\ell-5$ \\
    \hline
  \end{tabular}
  \vskip 0.1in
  \begin{tabular}{|c||c|c|c|c|c|c|} 
    \hline
       & $m_i = 5$ & $m_i = 6$ & $m_i = 7$ \\
    \hline
       & & & \\[-1em]
    \hline
    $m_{i-1} = 2$ & $k+2\ell-3$ & $2k+\ell-3$ & $3\ell$ \\
    \hline
    $m_{i-1} = 3$ & $2k+\ell-2$ & $3k$ & $k+2\ell-3$ \\
    \hline
    $m_{i-1} = 4$ & $2k+2\ell-7$ & $3k+\ell-6$ & $k+3\ell-6$ \\
    \hline
    $m_{i-1} = 5$ & $2k+2\ell-7$ & $3k+\ell-6$ & $k+3\ell-6$ \\
    \hline
    $m_{i-1} = 6$ & $3k+\ell-5$ & $4k-2$ & $2k+2\ell-8$ \\
    \hline
    $m_{i-1} = 7$ & $k+3\ell-7$ & $2k+2\ell-8$ & $4\ell-2$ \\
    \hline
  \end{tabular}
  \caption{Values of $(T_{i} + T_{i-1})$ for $m_i, m_{i-1} \in \{2, 3, 4, 5, 6, 7\}$, assuming that $P_i \in \Rec(k, \ell)$.}  
  \label{table2} 
\end{table}

\begin{table}[ht]\centering
  \begin{tabular}{|c|c|c|}
    \hline 
      & $(m_{i-1}, m_i) = (j, 1)$ & $(m_{i-1}, m_i) = (1, j)$ \\
    \hline
      & & \\[-1em]
    \hline
    $j = 1$ & $2\max\{k,\ell\}-3$ & $2\max\{k,\ell\}-3$ \\
    $j = 2$ & $\max\{k,\ell\}+\ell-1$   & $\ell+\max\{k,\ell-2\}$ \\
    $j = 3$ & $\max\{k,\ell\}+k-1$   & $k+\max\{k-2,\ell\}$ \\
    $j = 4$ & $\max\{k,\ell\}+k+\ell-5$ & $k+\ell+\max\{k-2,\ell-1\}-3$ \\
    $j = 5$ & $\max\{k,\ell\}+k+\ell-5$ & $k+\ell+\max\{k-1,\ell-2\}-3$ \\
    $j = 6$ & $\max\{k,\ell\}+2k-4$  & $2k+\max\{k,\ell-3\}-2$ \\
    $j = 7$ & $\max\{k,\ell\}+2\ell-4$  & $2\ell+\max\{k-3,\ell\}-2$ \\
    \hline
  \end{tabular}
  \caption{Possible values of $(T_{i} + T_{i-1})$ for $m_i = 1$ or $m_{i-1} = 1$, assuming that $P_i \in \Rec(k, \ell)$.}  
  \label{table3}
\end{table}

Initially the object of our interest in Table~\ref{table2} and~Table~\ref{table3} is whether, for each pair $(a,b)$ with $1 \le a, b \le 7$, for $P_i \in \Rec(k, \ell)$ the value of $(T_{i} + T_{i-1})$ is larger when $(m_{i-1}, m_i) = (a,b)$ or when $(m_{i-1}, m_i) = (b,a)$. We summarize the answer to that question in Figure~\ref{figure:moves} that tells us what pairs of consecutive moves are prohibited in a scheme (because one could swap them and obtain a slower percolating process). A solid directed edge from $a$ to $b$ means that, no matter what the values of $k$ and $\ell$ are, it takes strictly longer to apply Move $b$ right before Move $a$ than it takes to apply them in the opposite order. Thus in this case the consecutive pair of moves $ab$ inside a scheme is prohibited. A dashed directed edge from $a$ to $b$ means that, no matter what the values of $k$ and $\ell$ are, it always takes at least as much time to apply Move $b$ followed by Move $a$ as it takes to do it in the opposite order. Hence $ab$ is 
not prohibited but might be avoided in a scheme. A dashed undirected edge means that the order of moves $a$ and $b$ maximizing the value of $(T_{i} + T_{i-1})$ depends on the values of $k$ and $\ell$. No edge between $a$ and $b$ means that the order we use does not affect the value of $(T_{i} + T_{i-1})$.

\begin{figure}[ht] \centering
  \begin{tikzpicture}[ shorten >=1pt,->]
    \tikzstyle{vertex}=[draw,shape=circle,minimum size=14pt,inner sep=0pt]
    \foreach \name/\x/\y in {1/0/0, 2/-1.12/0, 3/1.1/0, 5/1.5/1.2, 6/-1.5/1.2, 4/-1.5/-1.2, 7/1.5/-1.2}
      \node[vertex] (P-\name) at (\x,\y) {$\name$};
    \draw (P-4) .. controls +(120+180:1.2cm) and +(60+180:1.2cm) .. (P-7);
    \draw (P-7) .. controls +(150+270:1cm) and +(30+270:1cm) .. (P-5);
    \draw (P-5) .. controls +(120:1.2cm) and +(60:1.2cm) .. (P-6);
    \draw (P-6) .. controls +(150+90:1cm) and +(30+90:1cm) .. (P-4);
    \foreach \from/\to in {5/3,4/2, 7/3, 6/2}
      {\draw (P-\from) -- (P-\to);}
    \foreach \from/\to in {4/1,5/1}
      {\draw[dashed] (P-\from) -- (P-\to);}
    \foreach \from/\to in {2/1,1/3,1/6,1/7}
      {\draw[dashed,-] (P-\from) -- (P-\to);}
  \end{tikzpicture}
  \caption{Relation between pairs of consecutive moves $(m_{i-1}, m_i)$ and the value of $(T_{i} + T_{i-1})$.}
  \label{figure:moves}
\end{figure}
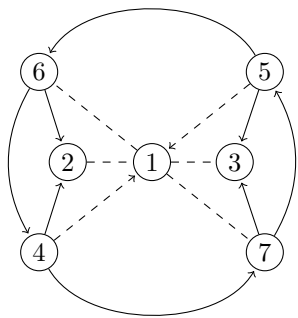

Our proof will deal with sequences of moves and in order to describe these we shall use the following notation. We say that a finite (possibly empty) sequence of moves is of the form $[a_1|a_2|\ldots|a_r]^*$ if all its terms belong to $\{a_1,a_2,\ldots,a_r\} \subseteq [7]$; we say that it is of the form $[a_1|a_2|\ldots|a_r]^{\leq j}$ if, in addition, it has at most $j$ terms. We shall concatenate these expressions to create more general ones that describe the corresponding sets of concatenated sequences of moves. For example, all of the sequences $1444336366$, $43333$, $16633$ are of the form $[1]^{\leq 1}[4]^*[3|6]^*$, but $144334$ is not.

Next, we prove a series of propositions about schemes for $M(k,\ell)$. These propositions will allow us to gain control over the structure of the schemes and consequently, implementing additional machinery, to give tight bounds on $M(n)$.

\begin{prop}\label{prop:step1} For any $k, \ell \geq 3$, $(k,\ell) \neq (3,3)$, there exists a scheme solving $M(k,\ell)$ of the form $(s_0, t_0, [1|2|3]^*[4|5|6|7]^*)$ with $s_0 \geq 3, t_0 = 2$ or $s_0 = 2, t_0 \geq 3$.
\end{prop}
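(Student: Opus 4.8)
### Proof strategy for Proposition~\ref{prop:step1}

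The plan is to start from any scheme $(s_0,t_0,m_1 m_2 \ldots m_r)$ solving $M(k,\ell)$, which exists by Theorem~\ref{theorem:Mnrecurrence} together with Remark~\ref{rem:smallMnn}, so that we may assume $s_0 \ge 3, t_0 = 2$ or $s_0 = 2, t_0 \ge 3$; these base-rectangle conditions are preserved throughout the argument since we will only ever permute the sequence $m_1 \ldots m_r$. The goal is to show that among all schemes solving $M(k,\ell)$ with a fixed base rectangle $P_0$ there is one in which every move from $\{4,5,6,7\}$ occurs after every move from $\{1,2,3\}$, i.e. the move word lies in $[1|2|3]^*[4|5|6|7]^*$. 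I would prove this by a ``bubble-sort'' argument: repeatedly locate a consecutive pair $(m_{i-1},m_i)$ with $m_{i-1}\in\{4,5,6,7\}$ and $m_i\in\{1,2,3\}$ and swap it, arguing that the swapped word is still a scheme solving $M(k,\ell)$.

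The key point is that swapping a consecutive pair of moves is legitimate precisely when it does not decrease the total percolation time. By Observation~\ref{obs:minimalityprinciple}, a scheme remains a scheme when we look at any prefix, and the total time is $\sum_{i=0}^r T_i$; since $T_j$ for $j\notin\{i-1,i\}$ depends only on the dimensions of $P_j$ and its move $m_j$, and since $m_{i-1}$ and $m_i$ are \emph{compatible} with $m_i$ and $m_{i-1}$ (swapping two moves yields a rectangle of the same dimensions, as noted in the paragraph preceding Table~\ref{table1}), all rectangles $P_j$ with $j\ge i$ are unchanged as well. Hence swapping $m_{i-1}m_i$ to $m_i m_{i-1}$ changes the total time only through the single quantity $T_{i}+T_{i-1}$, whose value is read off from Table~\ref{table2} and Table~\ref{table3}. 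So the swap is valid (the new word is still a scheme, because its total time is still $\ge M(k,\ell)$, hence equal to $M(k,\ell)$) as long as moving $m_i$ before $m_{i-1}$ does not strictly decrease $T_i+T_{i-1}$. That is exactly the content of Figure~\ref{figure:moves}: a solid edge $a\to b$ forbids the consecutive pair $ab$ because $ba$ is strictly faster, a dashed directed edge $a\to b$ says $ba$ is at least as fast, and no edge means the order is irrelevant. One checks from Figure~\ref{figure:moves} (equivalently, from the cited entries of Tables~\ref{table2} and~\ref{table3}) that for every $a\in\{4,5,6,7\}$ and $b\in\{1,2,3\}$ the edge between $a$ and $b$ is either absent or directed from $a$ to $b$ (solid $4\to 2$, $5\to 3$, $6\to 2$, $7\to 3$; dashed $4\to 1$, $5\to 1$; no edge for the remaining pairs $6\!-\!3$, $7\!-\!3$... — here one simply verifies each of the twelve pairs against the figure), so every such consecutive pair $ab$ may be replaced by $ba$ without decreasing the total time.

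It remains to argue termination of the bubble-sort. Assign to a move word $w=m_1\ldots m_r$ the inversion count $\mathrm{inv}(w)=\#\{(i,j): i<j,\ m_i\in\{4,5,6,7\},\ m_j\in\{1,2,3\}\}$. Each swap of an adjacent ``bad'' pair (a $\{4,5,6,7\}$-move immediately followed by a $\{1,2,3\}$-move) decreases $\mathrm{inv}(w)$ by exactly one while keeping the word a scheme solving $M(k,\ell)$ with the same base rectangle; since $\mathrm{inv}(w)\ge 0$, after finitely many steps we reach a word with no adjacent bad pair. A word over $\{1,\ldots,7\}$ with no occurrence of a $\{4,5,6,7\}$-letter immediately followed by a $\{1,2,3\}$-letter must have all its $\{1,2,3\}$-letters preceding all its $\{4,5,6,7\}$-letters, i.e. it is of the form $[1|2|3]^*[4|5|6|7]^*$. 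This yields a scheme of the required form, completing the proof. The main thing to be careful about — the only ``obstacle'' — is verifying that \emph{no} pair $(a,b)$ with $a\in\{4,5,6,7\}$, $b\in\{1,2,3\}$ carries a solid or dashed edge \emph{directed from $b$ to $a$} in Figure~\ref{figure:moves}; this is a finite check against Tables~\ref{table2} and~\ref{table3}, but it is the crux, since a single such pair would block the sorting.
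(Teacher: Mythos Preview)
Your bubble-sort strategy is the right idea and is essentially what the paper does, but there is a genuine gap: the claim that every adjacent pair $(a,b)$ with $a\in\{4,5,6,7\}$ and $b\in\{1,2,3\}$ can be swapped to $(b,a)$ without decreasing $T_{i-1}+T_i$ is \emph{false} for $(a,b)=(6,1)$ and $(a,b)=(7,1)$. In Figure~\ref{figure:moves} the edges $1\text{--}6$ and $1\text{--}7$ are dashed and \emph{undirected}, meaning the better order depends on $k$ and $\ell$. Concretely, from Table~\ref{table3}, if $P_i\in\Rec(k,\ell)$ with $\ell\ge k+3$ then $(m_{i-1},m_i)=(6,1)$ gives $T_{i-1}+T_i=\ell+2k-4$, whereas $(1,6)$ gives $2k+(\ell-3)-2=2k+\ell-5$, so swapping would \emph{strictly decrease} the time and destroy the scheme. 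Thus your ``finite check'' in the last paragraph fails precisely here (and symmetrically for $(7,1)$).

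The paper closes this gap not by swapping but by showing that the pairs $(6,1)$ and $(7,1)$ can never occur in a scheme at all. For $(6,1)$: if $k\ge\ell$ then $(1,6)$ is strictly slower (so the scheme was not optimal); if $k<\ell$ then one replaces $(6,1)$ by the \emph{compatible} pair $(3,5)$ (Move~3 followed by Move~5 yields the same $P_i$), which by Table~\ref{table2} gives $2k+\ell-2>\ell+2k-4$, again contradicting optimality. The case $(7,1)$ is symmetric, using $(2,4)$ in place of $(3,5)$. Once you add this argument, your inversion-count termination works exactly as you wrote it; alternatively, as the paper does, you can simply choose from the outset a scheme minimising $S=\sum_{m_i\in\{1,2,3\}} i$ and derive a contradiction from any bad adjacent pair.
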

\begin{proof}
Given $k, \ell$, consider a scheme $Q = (s_0,t_0, m_1m_2 \ldots m_r)$ with $s_0 \geq 3, t_0 = 2$ or $s_0 = 2, t_0 \geq 3$ that solves $M(k,\ell)$ (which exists by Remark \ref{rem:smallMnn}) that minimizes the sum $S = \sum_{m_i \in\{1, 2, 3\}} i$. Proposition~\ref{prop:step1} follows immediately from the following claim: in such a scheme, for any $i$ with $2\le i \le s$, if $m_i$ is equal to $1$, $2$ or $3$ then $m_{i-1}$ is equal to $1$, $2$, or $3$. Let us prove that this claim holds.

Fix $2\le i \le r$. Assume first that $m_i = 2$. In Figure~\ref{figure:moves} we see that $m_{i-1} \notin \{4, 6\}$ and that if $m_{i-1} \in \{5,7\}$ then we could swap the order of  $(m_{i-1},m_i)$ without changing percolation time and at the same time decreasing the value of $S$, contradicting the choice of~$Q$. Therefore $m_{i-1}$ must be either $1$, $2$ or $3$. The case where $m_i = 3$ is analogous.

Assume now that $m_i = 1$.  If $m_{i-1} \in \{4, 5\}$ then we could swap the order of  $(m_{i-1},m_i)$ without decreasing percolation time and at the same time decreasing the value of $S$, contradicting the choice of $Q$. Now, suppose that $m_{i-1} = 6$. If $k \ge \ell$ then, as shown in Table~\ref{table3},
\[
 T_{i-1} + T_{i} = \max\{k,\ell\}+2k-4 < 2k+\max\{k,\ell-3\}-2
\]
in which case we could set $(m_{i-1},m_i) = (1,6)$ and increase percolation time. If $k < \ell$ then again in Table \ref{table3} we find that
\[
  T_{i-1} + T_{i} = \max\{k,\ell\}+2k-4 < 2k+\ell-2 
\]
in which case we can set $(m_{i-1},m_i) = (3,5)$ and increase percolation time. In either case, we contradict the fact that $Q$ is a scheme. Therefore $m_{i-1} \ne 6$. We show that $m_{i-1} \ne 7$ in an analogous way: one could either swap $(7,1)$ or replace it by $(2,4)$ in order to increase percolation time (doing one or the other depending on the values of $k$ and $\ell$). Therefore we must have $m_{i-1}$ equal to $1$, $2$ or $3$.
\end{proof}

Before we continue our investigations of the form of the schemes that solve $M(k,\ell)$ let us make the following two observations about the infection process started from a $(k,\ell)$-perfect set.

\begin{observation}
\label{observation:maximumtwo}
For any $i \ge 1$, no matter which move $(1-7)$ is used at moment $i$, between time step $M(P_{i-1})+1$ and time step $M(P_{i})$ (when the infection of the rectangle $P_{i}$ is complete) at each step at most two new sites become infected.
\end{observation}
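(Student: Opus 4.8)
The plan is to analyze each of the seven moves separately, relying on the description of how $P_i$ is built from $P_{i-1}$ given in properties~(\ref{calRmax:b}), (\ref{calRmax:e}) and~(\ref{calRmax:f}) of Definition~\ref{def:perfectset}. The key structural fact to exploit is the semi-perimeter bound: whenever a healthy site becomes infected, at least two boundary edges are destroyed and at most two are created, so $\Phi$ of the infected region is nonincreasing throughout the process. Since at each step the infected region is a disjoint union of rectangles pairwise at distance $\geq 3$, I would track the infected region between times $M(P_{i-1})$ and $M(P_i)$ as ``$P_{i-1}$ (fully infected) together with the one or two extra initially infected sites $v_{i-1}$ (and $w_{i-1}$) plus whatever has grown from them''.

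First I would handle Moves~$1$, $2$ and~$3$ (property~(\ref{calRmax:e})): here $P_i$ is spanned by $P_{i-1} \cup \{v_{i-1}\}$, where $v_{i-1}$ is at distance exactly $2$ from a corner of $P_{i-1}$ and distance $\geq 3$ from every other site of $P_{i-1}$. Before the single-site component $\{v_{i-1}\}$ merges with $P_{i-1}$ nothing is infected (they are at distance $2$, and a lone site at distance $2$ from a rectangle produces at most the single site bridging them each step — in fact the merge happens in one or two steps depending on the orientation, infecting at most two sites per step). Once merged, the region is again a single rectangle $P'$ that contains $P_{i-1}$, and from then on infection proceeds along the at most two ``new'' rows/columns of $P_i \setminus P'$; a standard argument shows that completing a rectangle to one with semi-perimeter larger by $2$ or $3$, when driven from its existing interior, infects at most two sites per step (the wave moves along a single row or column, or along a corner where two perpendicular strips of width $1$ are each one site wide). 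For Moves~$4$--$7$ (property~(\ref{calRmax:f})) I would argue the same way but now with two auxiliary sites: $v_{i-1}$ first merges with $P_{i-1}$ as above, then $w_{i-1}$ (at distance exactly $1$ from a last-infected corner of $\closure{P_{i-1}\cup\{v_{i-1}\}}$ and distance $\geq 2$ from everything else) merges, contributing at most one or two sites at the merging step; between these events the two small components never interact because they stay at distance $\geq 2$ from each other (their total contribution being a single site until a merge). In every sub-case the growth is along strips of width $1$, so the per-step increment never exceeds $2$.

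The cleanest way to package all of this uniformly is: at every step after time $M(P_{i-1})$, the set of sites infected so far inside $P_i$ has semi-perimeter at most $\Phi(P_{i-1}) + 4$ (we add at most two isolated sites, each contributing $4$ to the semi-perimeter but also the merges only decrease it), and one checks that within $P_i$ any configuration of semi-perimeter at most $\Phi(P_i)$ can gain at most two sites in a single step unless it already occupies a full sub-rectangle that it is extending by more than a width-$1$ strip — which our move structure forbids, since each move increases each side length by at most a bounded amount and, crucially, by exactly $1$ in at least one direction except for Moves $6$ and $7$ where the other dimension is unchanged. I expect the main obstacle to be the bookkeeping in Moves~$6$ and~$7$ (the $+3$ moves $\Rec(s_{i-1},t_{i-1}+3)$ and $\Rec(s_{i-1}+3,t_{i-1})$), where a width-$3$ strip is appended: one must verify that the strip fills one column (or row) at a time rather than advancing a whole width-$3$ front, which follows from the position of $w_{i-1}$ forcing the strip to be completed in a ``corner-first, row-by-row'' fashion — this is exactly the content of properties~(\ref{calRmax:d}) and~(\ref{calRmax:f}) and is where the careful choice of the auxiliary sites in Definition~\ref{def:perfectset} pays off.
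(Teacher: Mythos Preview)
The paper states this as an \emph{Observation} with no proof; it is meant to be read off directly from Figures~\ref{figure:Moves123} and~\ref{figure:Moves4567}, which depict the infection of $P_i\setminus P_{i-1}$ explicitly as a wave travelling along one or two width-$1$ strips. Your case-by-case plan (treat Moves~1--3 via property~(\ref{calRmax:e}), then Moves~4--7 via property~(\ref{calRmax:f})) is exactly the right way to make this rigorous, and the core reason you identify---that after $P_{i-1}$ is full the remaining sites of $P_i$ are filled along at most two linear fronts---is correct.

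Two points, however, need fixing. First, the assertion ``once merged, the region is again a single rectangle $P'$'' is false: for Move~1, say, after the bridging step the infected set is $P_{i-1}$ together with an L-shaped nub at the corner, and it remains non-rectangular until $P_i$ is complete. This does not damage the conclusion (the front still advances along one new row and one new column, two cells per step), but the sentence should be deleted and replaced by the honest description of the L-shaped growth.

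Second, and more seriously, the ``cleanest packaging'' via the semi-perimeter does not work as stated. A bound on $\Phi$ of the infected region does not by itself bound the number of newly infected sites per step: for instance, a large fully infected square with a $3\times 3$ interior hole whose centre is infected gains four sites in a single step while $\Phi$ only decreases. What is really needed is control over the \emph{shape} of the boundary, and that is precisely what the move-by-move geometry supplies. So the semi-perimeter paragraph should be dropped; the direct inspection of each move (which you outline correctly, including the key point for Moves~6 and~7 that the placement of $w_{i-1}$ forces the width-$3$ strip to fill one column at a time) is both necessary and sufficient.
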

\begin{observation} \label{observation:howmanyones}
For any $i \ge 1$, if $s_{i-1}, t_{i-1} \geq 2$, then the following statements hold.
\begin{enumerate}
\item If we use Move $1$ at moment $i$ then there are exactly $|s_{i-1} - t_{i-1}|$ time steps between $M(P_{i-1})+1$ and $M(P_{i})$ (when all sites of $P_i$ are infected) when only one new site becomes infected. These are $M(P_{i})-|s_{i-1} - t_{i-1}|+1, M(P_{i})-|s_{i-1} - t_{i-1}|+2, \ldots, M(P_{i})$.
\item If we use Move $2$ or $3$ at moment $i$ then there are exactly $3$ time steps between $M(P_{i-1})+1$ and $M(P_{i})$ (when all sites of $P_i$ are infected) when only one new site becomes infected. These are $M(P_{i-1})+1, M(P_{i-1})+2, M(P_{i})$.
\end{enumerate}
\end{observation}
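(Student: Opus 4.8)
The plan is to analyze, separately for each type of move, how the infection advances through $P_i\setminus P_{i-1}$ during the time window $M(P_{i-1})+1,\dots,M(P_i)$. The starting point is Definition~\ref{def:perfectset}: for a $(k,\ell)$-perfect set $A$, by property~(\ref{calRmax:c}) the whole of $P_{i-1}$ is infected at time $M(P_{i-1})$, and, since Moves~$1,2,3$ are exactly the moves governed by property~(\ref{calRmax:e}), the rest of $P_i$ is then infected by the process obtained from ``$P_{i-1}$ all infected'' together with the single seed $v_{i-1}$, which sits at distance exactly $2$ from a last-infected corner of $P_{i-1}$ and at distance at least $3$ from every other site of $P_{i-1}$ (this fixes its position up to the symmetries of $P_{i-1}$). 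We will also use that $M(P_i)-M(P_{i-1})$ equals the entry $T_i$ of Table~\ref{table1} for the move used, namely $\max\{s_{i-1},t_{i-1}\}$ for Move~$1$ and $t_{i-1}+1$ (resp.\ $s_{i-1}+1$) for Move~$2$ (resp.\ Move~$3$).

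For Move~$1$, write $P_{i-1}=[s]\times[t]$ with $s=s_{i-1}\ge t=t_{i-1}\ge 2$ and place $v_{i-1}=(s+1,t+1)$, diagonally off the corner $(s,t)$, so $P_i=[s+1]\times[t+1]$. A direct check shows that at step $M(P_{i-1})+1$ exactly the two sites $(s+1,t)$ and $(s,t+1)$ become infected, and then, by induction on $j$, an ``$L$-shaped'' front advances one unit per step both down the new column $\{s+1\}\times[t+1]$ and leftward along the new row $[s+1]\times\{t+1\}$; thus at each step $M(P_{i-1})+j$ with $1\le j\le t$ exactly two new sites are infected. After step $M(P_{i-1})+t$ the column is full, and each of the remaining steps $M(P_{i-1})+t+1,\dots,M(P_{i-1})+s=M(P_i)$ infects exactly one site of the row. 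This yields precisely $s-t=|s_{i-1}-t_{i-1}|$ single-site steps, namely $M(P_i)-|s_{i-1}-t_{i-1}|+1,\dots,M(P_i)$ (an empty list when $s=t$), as claimed.

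For Move~$2$ (Move~$3$ being identical after a $90^{\circ}$ rotation), write $P_{i-1}=[s]\times[t]$ with $t=t_{i-1}\ge 2$ and place $v_{i-1}=(s+2,t)$, the essentially unique position at distance $2$ from a corner and distance at least $3$ from the rest of $P_{i-1}$, so $P_i=[s+2]\times[t]$. Tracing the front one finds: step $M(P_{i-1})+1$ infects only $(s+1,t)$; step $M(P_{i-1})+2$ infects only $(s+1,t-1)$; from step $M(P_{i-1})+3$ onward a width-two diagonal strip descends, infecting two new sites at each of the steps $M(P_{i-1})+3,\dots,M(P_{i-1})+t$ (an empty range when $t=2$); and finally the single site $(s+2,1)$ is infected at step $M(P_{i-1})+t+1=M(P_i)$. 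Hence there are exactly three single-site steps, $M(P_{i-1})+1$, $M(P_{i-1})+2$ and $M(P_i)$, and the bookkeeping is consistent: the $2t-1$ sites of $(P_i\setminus P_{i-1})\setminus\{v_{i-1}\}$ are infected in $3$ single-site and $t-2$ two-site steps.

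The one point needing care is the assertion that in the window the process really is this ``seed-only'' process, i.e.\ that $P_i\setminus P_{i-1}$ contains no further initially infected sites of $A$ that would perturb the front. This is part of what it means for the process from $A$ to be described by $P_0\subset\cdots\subset P_r$ with the stated moves (Definition~\ref{def:perfectset}); alternatively, any such extra site would, by monotonicity of the process together with Observation~\ref{observation:maximumtwo} and the already-established equality $M(P_i)-M(P_{i-1})=T_i$, either span $P_i$ strictly faster than $M(P_i)$ or force three sites to be infected in a single step, both impossible. Granting this, the two geometric traces together with Table~\ref{table1} finish the proof; I expect the only genuinely delicate part to be the first two or three steps near the seeded corner, where the ``one versus two new sites'' behaviour is decided, the remainder of each trace being a routine induction on the step number.
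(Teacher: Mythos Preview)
The paper states this result as an Observation without proof, so there is no argument to compare against; your direct trace of the infection front for each of Moves~$1$, $2$, $3$ is exactly the kind of verification the authors presumably had in mind, and your step-by-step counts (for Move~$1$: two-site steps for $j=1,\dots,\min\{s_{i-1},t_{i-1}\}$ followed by $|s_{i-1}-t_{i-1}|$ one-site steps; for Move~$2$: one site at each of the first two steps and the last step, two sites in between) are correct and match the values of $T_i$ in Table~\ref{table1}.

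Your final paragraph is the right place to be careful, and your handling of it is adequate though slightly informal. The definition of a $(k,\ell)$-perfect set does not literally force $A\cap(P_i\setminus P_{i-1})=\{v_{i-1}\}$, but your counting argument closes the gap cleanly: in the process started from $A$, the rectangle $P_{i-1}$ is fully infected at time $M(P_{i-1})$ and $P_i$ at time $M(P_i)$, so exactly $|P_i\setminus P_{i-1}|$ minus the number of already-infected sites in $P_i\setminus P_{i-1}$ must be infected over $T_i$ steps, each of size at most two by Observation~\ref{observation:maximumtwo}; since the seed-only process already uses up all the slack (the total equals $2T_i$ minus the claimed number of one-site steps), any additional infected site in $P_i\setminus P_{i-1}$ at time $M(P_{i-1})$ would either shorten the window or force a step with three new sites, both impossible. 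It is worth noting explicitly that the last-infected corner of $P_{i-1}$ (guaranteed by property~(\ref{calRmax:d})) is the unique site of $P_{i-1}$ at distance~$2$ from $v_{i-1}$, so no site of $P_i\setminus P_{i-1}$ other than $v_{i-1}$ itself can acquire two infected neighbours before time $M(P_{i-1})+1$; this is what ensures the front really does start only at that time.
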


From Observation \ref{observation:maximumtwo} and Observation \ref{observation:howmanyones} the following claim follows. Its proof is simple but rather technical and fully analogous to Claim 13 in \cite{benevidesprzykucki02} therefore, for the sake of brevity, we leave it without proof.

\begin{claim}\label{claim:moves123}
Suppose that there exists a $(k,\ell)$-perfect set $A$ internally spanning a rectangle $R \in \Rec(k,\ell)$ with a sequence of rectangles $P_0 \subset P_1 \subset \ldots \subset P_r \in \Rec(k,\ell)$ associated with it, described by a triple of the form $(s_0, t_0, [1|2|3]^*)$ with $s_0 \geq 3, t_0=2$ or $s_0 = 2, t_0 \geq 3$. Then there exists a $(k,\ell)$-perfect set $A'$ internally spanning the rectangle $R \in \Rec(k,\ell)$ described by a triple of the form $(s_0, t_0, [2]^*[1]^*[3]^*)$, or of the form $(s_0, t_0, [3]^*[1]^*[2]^*)$.
\end{claim}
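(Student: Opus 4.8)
The plan is to start from the given $(k,\ell)$-perfect set $A$ with its scheme $(s_0,t_0,m_1\cdots m_r)$, $m_i\in\{1,2,3\}$, and transform it into a scheme of one of the two canonical forms by successively reordering the moves, each step keeping the percolation time equal to $M(k,\ell)$. The engine for this is the single-step count supplied by Observations~\ref{observation:maximumtwo} and~\ref{observation:howmanyones}. Writing $\sigma$ for the total number of time steps in phases $1,\dots,r$ at which exactly one new site is infected, one has, for any $(k,\ell)$-perfect set whose initial rectangle lies in $\Rec(s_0,t_0)$,
\[
\text{(percolation time)}\;=\;M(s_0,t_0)+\tfrac12\bigl((k\ell-s_0t_0)+\sigma\bigr),
\]
because phase $i$ infects $s_it_i-s_{i-1}t_{i-1}$ sites in $T_i$ steps, each infecting one or two sites. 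Moreover a Move~$2$ or Move~$3$ always contributes exactly $3$ to $\sigma$, while a Move~$1$ performed when the current rectangle lies in $\Rec(s,t)$ contributes $|s-t|$. Since $A$ solves $M(k,\ell)$, the value of $\sigma$ is maximal among all $(k,\ell)$-perfect sets over $\Rec(s_0,t_0)$, so it suffices to produce a scheme in canonical form with the same value of $\sigma$.

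Next I would carry out the reordering. Moves $1,2,3$ change the side-lengths by $(+1,+1),(+2,0),(0,+2)$, so every reordering of the multiset $\{m_1,\dots,m_r\}$ still reaches $\Rec(k,\ell)$ from $\Rec(s_0,t_0)$; tracking $\delta:=s-t$, Move~$2$ does $\delta\mapsto\delta+2$, Move~$3$ does $\delta\mapsto\delta-2$, and Move~$1$ fixes $\delta$. By Tables~\ref{table2} and~\ref{table3} (summarised in Figure~\ref{figure:moves}) an adjacent swap of two moves from $\{2,3\}$ never changes $T_{i-1}+T_i$, so all Move-$2$'s and Move-$3$'s commute; for an adjacent pair $1j$ with $j\in\{2,3\}$, swapping moves the Move-$1$ to a position with a different value of $\delta$, and changes $\sigma$ by the corresponding change in $|\delta|$ there. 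As $A$ is already optimal, no such swap can increase $\sigma$, so performing the $\sigma$-preserving swaps I would move every Move-$1$ to a position where $|\delta|$ is largest; this forces the Move-$2$'s and Move-$3$'s to separate into two blocks, one all of type $2$ and the other all of type $3$, flanking a single block of Move-$1$'s. Thus the scheme becomes $(s_0,t_0,[2]^a[1]^b[3]^c)$ or $(s_0,t_0,[3]^c[1]^b[2]^a)$, and property~(\ref{calRmax:a}), which forbids the first move from being Move~$2$ when $t_0=2$ and Move~$3$ when $s_0=2$, dictates which of the two forms to use.

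Finally I would exhibit the required $(k,\ell)$-perfect set $A'$ realising the canonical move sequence: start from the $(s_0,t_0)$-perfect set of Lemma~\ref{lemma:kequals2} (or the set given by the induction hypothesis when $s_0,t_0\ge3$) and, for $i=1,\dots,r$, adjoin the single site $v_{i-1}$ demanded by property~(\ref{calRmax:e}), placed at distance $2$ from a last-infected corner of $P_{i-1}$. A routine induction, using Observation~\ref{observation:howmanyones} at each step, shows that each $P_i$ again has a corner infected last (property~(\ref{calRmax:d})) and that the single-step count of $A'$ equals the maximal $\sigma$ found above; together with the minimality principle of Observation~\ref{obs:minimalityprinciple}, which hands us optimality at every intermediate rectangle $P_i$, this yields property~(\ref{calRmax:c}), so $A'$ is $(k,\ell)$-perfect and is described by a canonical triple.

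I expect the reordering step to be the main obstacle. One must verify that the required adjacent transpositions of the mixed pairs $1j$ can all be chosen $\sigma$-preserving — which uses in an essential way that the starting scheme is optimal not just overall but at every prefix $P_i$ (Observation~\ref{obs:minimalityprinciple}) — and one must reconcile the two candidate canonical forms with the first-move restriction of property~(\ref{calRmax:a}), together with the degenerate cases in which $a$, $b$ or $c$ is small. This bookkeeping is the ``simple but rather technical'' content that runs parallel to Claim~13 of~\cite{benevidesprzykucki02}.
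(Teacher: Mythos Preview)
The paper does not actually prove Claim~\ref{claim:moves123}; it states that the proof ``is simple but rather technical and fully analogous to Claim~13 in~\cite{benevidesprzykucki02}'' and omits it. Your outline is consistent with the approach the paper signals---namely, extracting from Observations~\ref{observation:maximumtwo} and~\ref{observation:howmanyones} a count of single-site steps~$\sigma$, noting that Moves~$2$ and~$3$ each contribute a fixed~$3$ while a Move~$1$ contributes~$|\delta|$ with $\delta=s-t$, and then reordering. So the strategy is right.

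Two points deserve correction or sharpening. First, your displayed formula is off by an additive constant: in each phase one of the new sites ($v_{i-1}$) lies in $A$ and is already infected, so the number of sites infected during phases $1,\dots,r$ is $(k\ell-s_0t_0)-r$, not $k\ell-s_0t_0$. This does not affect the reordering argument, since $r$ is fixed, but the formula as written is not quite true. Second, and more substantively, the passage ``As $A$ is already optimal, no such swap can increase $\sigma$, so performing the $\sigma$-preserving swaps \dots\ this forces the Move-$2$'s and Move-$3$'s to separate'' conflates local with global optimality. A sequence such as $[1][3][3][1]$ can be locally optimal (no single adjacent swap raises $\sigma$) yet strictly worse than $[1][1][3][3]$; the gain appears only after a preliminary $\sigma$-preserving swap. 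What you actually need is the stronger fact that $A$ is optimal \emph{at every prefix} (Observation~\ref{obs:minimalityprinciple} applied to the given perfect set, not to the set you are constructing). You mention this in your final paragraph, but it should drive the reordering step itself rather than be invoked afterwards. Relatedly, Observation~\ref{obs:minimalityprinciple} runs in the direction ``scheme $\Rightarrow$ every prefix is a scheme''; to get property~(\ref{calRmax:c}) for your $A'$ you should instead use the recursion~\eqref{equation:Mnrecurrence} directly: if the canonical triple attains $M(k,\ell)$, then peeling off the last move and comparing with the corresponding term of the max shows the truncated triple attains $M(s_{r-1},t_{r-1})$, and induction finishes. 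With these adjustments your outline matches what the paper's reference to Claim~13 presumably contains.
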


\begin{prop}\label{prop:step2} For any $n\ge 4$ there exists a scheme $Q$ either of the form $(s_0, 2, [1]^*[3]^*[4|5|6|7]^*)$ or of the form $(s_0, 2, [3]^*[1]^*[2]^*[4|5|6|7]^*)$ with $s_0 \ge 3$ that solves $M(n)$.
\end{prop}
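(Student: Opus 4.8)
The plan is to take the scheme supplied by Proposition~\ref{prop:step1} and reshape only its $[1|2|3]^*$-prefix, using Claim~\ref{claim:moves123}, while leaving the $[4|5|6|7]^*$-suffix untouched. Write $M(n)=M(n,n)$; since $n\ge 4$ we have $n\ge 3$ and $(n,n)\neq(3,3)$, so Proposition~\ref{prop:step1} gives a scheme $(s_0,t_0,uv)$ solving $M(n)$ with $u\in[1|2|3]^*$, $v\in[4|5|6|7]^*$, and either $s_0\ge 3, t_0=2$ or $s_0=2, t_0\ge 3$. First I would normalise the starting rectangle: reflecting $[n]^2$ in its main diagonal turns a $(k,\ell)$-perfect set into an $(\ell,k)$-perfect set, permutes the Moves by $1\mapsto 1$, $2\leftrightarrow 3$, $4\leftrightarrow 5$, $6\leftrightarrow 7$, and therefore maps a scheme solving $M(n,n)$ of the above form to another such scheme; hence we may assume $s_0\ge 3$ and $t_0=2$.

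Next I would isolate and rebuild the prefix. Let $P_j$ be the rectangle reached by applying the moves of $u$ to $P_0\in\Rec(s_0,2)$. By Observation~\ref{obs:minimalityprinciple} the truncation $(s_0,2,u)$ is itself a scheme, solving $M(P_j)$, and it has the form $(s_0,t_0,[1|2|3]^*)$ with $s_0\ge 3$, $t_0=2$; so Claim~\ref{claim:moves123} applies and yields a scheme $(s_0,2,u')$ solving $M(P_j)$ with $u'$ of the form $[2]^*[1]^*[3]^*$ or $[3]^*[1]^*[2]^*$. The observation that removes the leftover Move~$2$'s from the first form is this: since $(s_0,2,u')$ describes a $(k,\ell)$-perfect set, its rectangle $P_1'$ must satisfy condition~(\ref{calRmax:a}) of Definition~\ref{def:perfectset}, so both sides of $P_1'$ have length $\ge 3$; but Move~$2$ applied to any rectangle in $\Rec(\cdot,2)$ again has second side $2$, so $u'$ cannot begin with Move~$2$. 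Consequently $u'\in[1]^*[3]^*$ in the first case and $u'\in[3]^*[1]^*[2]^*$ in the second (blocks possibly empty).

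Finally I would splice $u'$ in front of $v$ and check that $(s_0,2,u'v)$ is again a scheme solving $M(n)$. The prefixes $u$ and $u'$ lead from $P_0$ to rectangles of the same dimensions (those of $P_j$), so the sequences of rectangle dimensions produced by $(s_0,2,uv)$ and $(s_0,2,u'v)$ coincide from $P_j$ onward. To exhibit the required $(n,n)$-perfect set one starts from the $(P_j)$-perfect set described by $(s_0,2,u')$ — among whose last-infected sites there is a corner of $P_j$, by condition~(\ref{calRmax:d}) — and adjoins the remaining rectangles of the original scheme together with the auxiliary sites $v_{i-1},w_{i-1}$ (from conditions~(\ref{calRmax:e}),~(\ref{calRmax:f})) in exactly the relative positions they had in $(s_0,2,uv)$; conditions~(\ref{calRmax:a})--(\ref{calRmax:f}), and the increments $T_i$, for the moves of $v$ depend only on the dimensions reached at each stage and on the positions of the auxiliary sites relative to the last-infected corner, all of which have been preserved, so the substitution goes through and the total time is still $M(n)$. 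Since $s_0\ge 3$, the resulting scheme has the form $(s_0,2,[1]^*[3]^*[4|5|6|7]^*)$ in the first case and $(s_0,2,[3]^*[1]^*[2]^*[4|5|6|7]^*)$ in the second, as required.

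I expect the only real work to be in this last step: verifying that re-attaching $v$ after the new prefix preserves genuine $(n,n)$-perfectness rather than just the total percolation time, i.e.\ that every clause of Definition~\ref{def:perfectset} invoked by the moves of $v$ is insensitive to the change of prefix because it refers only to the dimensions reached so far and to the location of the site becoming infected last. The normalisation by diagonal symmetry and the remark forbidding a leading Move~$2$ are short, and the core input — reshaping the $[1|2|3]^*$ block through Observation~\ref{obs:minimalityprinciple} and Claim~\ref{claim:moves123} — is used here as a black box.
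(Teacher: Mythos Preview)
Your proposal is correct and follows the same strategy as the paper: start from the scheme provided by Proposition~\ref{prop:step1}, normalise to $t_0=2$ by the diagonal symmetry of $[n]^2$, use Observation~\ref{obs:minimalityprinciple} to peel off the $[1|2|3]^*$ prefix, reshape it via Claim~\ref{claim:moves123}, and reattach the $[4|5|6|7]^*$ suffix. The only noticeable difference is in how you dispose of the leading $[2]^*$ block in the $[2]^*[1]^*[3]^*$ case: the paper absorbs those Move~$2$'s into the base rectangle, replacing $(s_0,2,[2]^a[1]^*[3]^*\ldots)$ by $(s_0+2a,2,[1]^*[3]^*\ldots)$, whereas you observe that condition~(\ref{calRmax:a}) of Definition~\ref{def:perfectset} forces $t_1'\ge 3$, so a genuine $(k,\ell)$-perfect set described by $(s_0,2,u')$ cannot begin with Move~$2$ at all. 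Both arguments are valid and lead to the same conclusion; yours is marginally slicker, while the paper's is more robust to any imprecision in the statement of Claim~\ref{claim:moves123}. Your discussion of the splicing step is also more explicit than the paper's, which simply asserts that the spliced triple is again a scheme.
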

\begin{proof}
Consider a scheme $Q = (s_0, 2, m_1m_2 \ldots m_r)$ with $s_0 \geq 3$ and sequence $m_1m_2 \ldots m_r$ of the form $[1|2|3]^*[4|5|6|7]^*$ which exists by Proposition~\ref{prop:step1} (by symmetry, when $k=\ell=n$, we might assume $t_0 = 2$).

Let $j = \max \{i: m_i \in \{1,2,3\}\}$. By Observation~\ref{obs:minimalityprinciple} the sequence of moves $m_1 \ldots m_j$ is such that the time taken to infect $P_j$ is maximum. Therefore, by Claim~\ref{claim:moves123}, we see that we may take $m_1 \ldots m_j$ of the form $[2]^*[1]^*[3]^*$ or of the form $[3]^*[1]^*[2]^*$. We observe that in the first case we obtain a scheme $Q'$ of the form $(s'_0, 2, [1]^*[3]^*[4|5|6|7]^*)$, as the triple $(s_0, 2, [2]^*[1]^*[3]^*[4|5|6|7]^*)$ gets simplified to $(s'_0, 2, [1]^*[3]^*[4|5|6|7]^*)$ (where $s'_0 = s_0+2a$ for $a$ equal to the number of times that Move~2 occurs in $m_1 \ldots m_j$). In the second case we have a scheme of the form $(s'_0, 2, [3]^*[1]^*[2]^*[4|5|6|7]^*)$.
\end{proof}

\begin{prop}\label{prop:step3} For any $n\ge 4$ there exists a scheme $Q$ solving $M(n)$ that is either of the form $(s_0, 2, [1]^{\leq 1}[3]^{\leq 2}[4|5|6|7]^*)$ or of the form $(s_0, 2, [3]^{\leq 2}[1]^{\leq 1}[2]^*[4|5|6|7]^*)$ with $s_0 \ge 3$.
\end{prop}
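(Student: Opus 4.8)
The plan is to start from one of the two scheme forms guaranteed by Proposition~\ref{prop:step2}, namely $(s_0, 2, [1]^*[3]^*[4|5|6|7]^*)$ or $(s_0, 2, [3]^*[1]^*[2]^*[4|5|6|7]^*)$ with $s_0 \ge 3$, and to argue that the number of occurrences of Moves $1$ and $3$ in the ``$[1|3]$-prefix'' can be reduced to the stated bounds ($[1]^{\le 1}[3]^{\le 2}$ in the first case, $[3]^{\le 2}[1]^{\le 1}$ in the second) without decreasing percolation time. The key point is that applying Move $3$ twice in a row, i.e.\ the pair $33$, is \emph{compatible} with $6$ (a single Move~$6$ grows the shorter side by $3$, as does $33$), and similarly $111$ is compatible with $45$ (and $11$ with~$1$ applied twice is just two copies of Move~$1$), so long runs of $1$'s and $3$'s at the start can be traded for moves from the suffix block $[4|5|6|7]^*$ without changing the final rectangle $[n]^2$. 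I would compare the time contributions using Table~\ref{table1} and Table~\ref{table2}: for instance, replacing a consecutive pair $33$ occurring when $P_i \in \Rec(k,\ell)$ by a single Move~$6$ changes $T_{i-1}+T_i$ from $2k+2$ (two type-$3$ moves, from Table~\ref{table2}) to the value $2k-1$ of a single Move~$6$ plus the move that $6$ displaces; one checks via the tables that performing this swap, together with re-sorting the resulting suffix back into the canonical order using Proposition~\ref{prop:step2} / Claim~\ref{claim:moves123}, does not decrease $M(n) = \sum T_i$. Iterating, one can push the count of Move~$3$'s in the prefix down to at most $2$ and the count of Move~$1$'s down to at most $1$.

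Concretely I would proceed as follows. First, take a scheme $Q$ of one of the two Proposition~\ref{prop:step2} forms that solves $M(n)$ and, among all such, minimizes the length of its $[1|3]$- (respectively $[3|1]$-) prefix. Second, suppose for contradiction this prefix still contains three or more $3$'s (or two or more $1$'s). Third, isolate a sub-block of the prefix — e.g.\ $333$, or $33$, or $11$ — and replace it by the compatible shorter block from $\{4,5,6,7\}^*$ (e.g.\ $333 \mapsto$ two Move~$6$'s is \emph{not} compatible since $6$ grows one side by $3$ and $333$ grows it by $9$ — rather $333$ is compatible with $66$ up to one extra $3$; the right replacement is $33 \mapsto 6$, consuming side-length $3$ in one move instead of two, so $333 \mapsto 6\,3$ and then $6$ is moved to the suffix). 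Fourth, use Figure~\ref{figure:moves} and Claim~\ref{claim:moves123} to re-sort the new move sequence into the canonical form of Proposition~\ref{prop:step2}, obtaining a scheme with a strictly shorter $[1|3]$-prefix, and check (via Tables~\ref{table1}--\ref{table2}) that percolation time did not drop — so the new scheme still solves $M(n)$, contradicting minimality. Finally, handle the residual small cases (prefixes of length $\le 3$ that are not yet in the claimed form, e.g.\ $313$, $131$, $133$) by direct comparison using the tables and, where helpful, by appealing to Claim~\ref{claim:moves123} to reorder a length-$3$ $[1|2|3]$-block optimally.

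The main obstacle I anticipate is bookkeeping the interaction between the replacement step and the re-sorting step: after substituting a compatible block, the freshly introduced Moves from $\{6,7\}$ (or $\{4,5\}$) land in the middle of the sequence and must be bubbled past the remaining prefix moves, and each such bubble-swap is only justified by Figure~\ref{figure:moves} when it does not \emph{decrease} time — but some of those edges are dashed-undirected, meaning the optimal order depends on $k,\ell$ at that stage. I would handle this by always performing the re-sort via the already-proven Claim~\ref{claim:moves123} (which legitimately produces a $(k,\ell)$-perfect set percolating in maximum time from a $[1|2|3]^*$-block) applied to the prefix together with the newly displaced move when that move is still of type $1$, $2$, or $3$; when the displaced move is of type $4$--$7$ it simply joins the suffix block $[4|5|6|7]^*$, and no reordering across the $1|2|3$ / $4|5|6|7$ boundary is needed because Proposition~\ref{prop:step1}'s argument already shows a $1|2|3$ move is never preceded by a $4|5|6|7$ move in an optimal scheme. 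The only genuinely delicate computations are the finitely many head-to-head comparisons of $T_{i-1}+T_i$ values from Table~\ref{table2} for the small residual prefixes, which are mechanical given the tables.
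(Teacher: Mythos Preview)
Your overall architecture (start from a Proposition~\ref{prop:step2} scheme, minimise the prefix, replace long runs by compatible shorter blocks, contradict minimality) matches the paper's, but two of the concrete steps are wrong or missing, and the proof does not go through as written.

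First, you have the compatibility relations backwards. From Table~\ref{table1}, Move~$3$ adds $2$ (not $3$) to the second side length, so $333$ increases it by $6$, and hence $333$ \emph{is} compatible with $66$ (each Move~$6$ adds $3$). This is exactly the replacement the paper uses: for $s_j \ge 5$ one checks from Tables~\ref{table2}--\ref{table3} that $333 \to 66$ does not decrease time, and the three remaining small prefixes $(3,2,333)$, $(3,2,1333)$, $(4,2,333)$ are ruled out by direct comparison with alternative triples. Your proposed substitution $33 \mapsto 6$ is \emph{not} compatible ($33$ adds $4$, $6$ adds $3$), so your Move~$3$ reduction mechanism as stated cannot work.

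Second, and more seriously, your plan contains no device that can reduce two Move~$1$'s to one. The pair $11$ increases each side by $2$, and no single move (nor any single move combined with a change inside the $[4|5|6|7]^*$ suffix) is compatible with it; your own remark that $111$ is compatible with $45$ does not shorten the prefix. The paper's trick here is to change the \emph{base rectangle}: replace $(s_0,2,11\,m_3\ldots)$ by $(s_0-1,2,14\,m_3\ldots)$, which yields the same $P_2 \in \Rec(s_0+2,4)$, takes at least as long (an explicit comparison of $T_0+T_1+T_2$), and then the new Move~$4$ can be bubbled past the remaining $1$'s and $3$'s into the suffix. This reduces the Move~$1$ count and contradicts minimality. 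Without allowing $s_0$ to vary, your minimality argument cannot eliminate the second Move~$1$. The second form $(s_0,2,[3]^*[1]^*[2]^*\ldots)$ is handled similarly in the paper, with an additional appeal to Observation~\ref{observation:howmanyones} and three further small-prefix checks; your proposal does not anticipate this.
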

\begin{proof}
By Proposition \ref{prop:step2} there exists a scheme $Q = (s_0, t_0, m_1m_2 \ldots m_r)$ that is either of the form $(s_0, 2, [1]^*[3]^*[4|5|6|7]^*)$ or of the form $(s_0, 2, [3]^*[1]^*[2]^*[4|5|6|7]^*)$. Let us consider these cases separately.

Assume first that there exists $Q$ of the form $(s_0, 2, [1]^*[3]^*[4|5|6|7]^*)$, and choose one for which the number of times it uses Move~$1$ is minimal. Let $j = \max \{i: m_i=1\}$. Let $P_j \in \Rec(s_j,t_j)$. Assume that Move~$3$ was used at least three times. For $s_j \geq 5$, we could replace the last occurrence of the sequence $333$ by the compatible sequence $66$ without decreasing percolation time. For $3 \leq s_j \leq 4$, we consider all possible options for $Q' = (s_0, t_0, m_1\ldots m_j)$, and note that either:
\begin{enumerate}
 \item $Q'=(3,2,333)$ which takes strictly less time ($15$ steps) to span $R \in \Rec(3, 8)$ than $Q''=(2,7,1)$ does ($16$ steps), or
 \item $Q'=(3,2,1333)$ which takes strictly less time ($21$ steps) to span $R \in \Rec(4, 9)$ than $Q''=(2,9,2)$ does ($22$ steps), or
 \item $Q'=(4,2,333)$ which takes strictly less time ($19$ steps) to span $R \in \Rec(4, 8)$ than $Q''=(2,5,15)$ does ($21$ steps).
\end{enumerate}

By Observation~\ref{obs:minimalityprinciple} none of the above $Q'$ can be an initial segment of $Q$. Thus there must exist $Q$ of the form $(s_0, 2, [1]^*[3]^{\leq 2}[4|5|6|7]^*)$. Now, assume that Move~$1$ is used at least twice, say, $Q$ is of the form $(s_0, 2, 11m_3m_4\ldots m_r)$. If $s_0 \geq 4$, then $Q$ can be replaced by $(s_0-1, 2, 14m_3m_4\ldots m_r)$ for which we still have $P_2 \in \Rec(s_0+2,4)$ and the percolation time of which is at least as big as for $Q$ because
\[
T_0 + T_1 + T_2 = M(s_0,2)+s_0+(s_0+1) = \floor{\frac{7 s_0-1}{2}}
\]
and the time sequence of the modified sequence of moves gives
\[
 T'_0 + T'_1 + T'_2 = M(s_0-1,2)+(s_0-1)+((s_0+2)+4-2) = \floor{\frac{7 s_0}{2}}.
\]
In fact, since in Figure \ref{figure:moves} there is a dashed directed edge from $4$ to $1$ and no edge between $4$ and $3$ we can move the new Move $4$ further in the sequence and obtain $\tilde{Q}$ of the form $(s_0, 2, [1]^*[3]^{\leq 2}[4|5|6|7]^*)$ in which the number of times that we use Move 1 is strictly smaller than in $Q$. This contradicts the minimality of the number of Move~$1$s used in $Q$. Finally, if $s_0 = 3$ then it is enough to notice that $(3,2,11)$ takes strictly less time ($10$ steps) to percolate in $R \in \Rec(5,4)$ than $(5,2,3)$ does ($12$ steps). Therefore Move~1 must be used at most once. Thus $Q$ is of the form $(s_0, 2, [1]^{\leq 1}[3]^{\leq 2}[4|5|6|7]^*)$ as stated.

Hence let us assume that there exists a scheme $Q$ of the form $(s_0, 2, [3]^*[1]^*[2]^*[4|5|6|7]^*)$. By the same argument as in the first case we can conclude that Move~3 is used at most two times. In fact, the only difference is that here we do not need to consider the subcase $Q' = (3,2,1333)$ in our analysis. Therefore there must exist a scheme of the form $(s_0, 2, [3]^{\leq 2}[1]^*[2]^*[4|5|6|7]^*)$.

Assume that Move~$1$ is used at least twice. If Move~3 is not used then $Q$ is of the form $(s_0, 2, 11m_3m_4\ldots m_r)$ and we can get a contradiction as in the first case. So, Move~3 must be used once or twice. It follows from Observation~\ref{observation:howmanyones} that, when we limit ourselves to sequences of the form $(s_0,2,[1|3]^*)$, the slowest sequences are obtained when Move~$1$s are applied to rectangles in which the difference between the length of their longer and their shorter side is maximum. This means that Move~$3$s could be used before Move 1 only if after using them the difference between the lengths of the sides of the rectangle we obtained was at least as large as $s_0-t_0 = s_0-2$. However, since Move~3 is used at most twice then, unless $s_0$ is small, by putting Move~$1$s before $3$s we obtain a sequence slower than if we did it the other way. More precisely, the only cases in which putting Move~$3$s before $1$s could possibly increase the percolation time are those where $s_0-2 < 
3$ and the initial sequences of steps in $Q$ are:
\begin{enumerate}
\item $Q'=(3,2,311)$ which takes strictly less time ($16$ steps) to span $R \in \Rec(5, 6)$ than $Q''=(2,5,12)$ does ($18$ steps), or
\item $Q'=(3,2,3311)$ which takes strictly less time ($24$ steps) to span $R \in \Rec(5, 8)$ than $Q''=(2,3,155)$ does ($25$ steps), or
\item $Q'=(4,2,3311)$ which takes strictly less time ($27$ steps) to span $R \in \Rec(6, 8)$ than $Q''=(2,7,17)$ does ($31$ steps).
\end{enumerate}
As in the first case, sets described by triples $Q''$ span the same rectangles as those spanned by sets described by corresponding triples $Q'$. Thus we see that the triples $Q'$ are not initial segments of schemes. This implies that Move~1 is used at most once, that is, in the second case $Q$ is of the form $(s_0, 2, [3]^{\leq 2}[1]^{\leq 1}[2]^*[4|5|6|7]^*)$ as stated.
\end{proof}

We are now ready to prove our main result.
{\par{\it Proof of Theorem \ref{theorem:arbitraryAsymptotics}}. \ignorespaces}
We begin proving that $M(n) \geq \frac{13}{18}n^2 + O(n)$ by constructing a particular family of percolating sets described by triples of the form $(s_0, 2, 1[4]^*[6]^*)$. (However, these sets are not necessarily $(n,n)$-perfect.) We consider the following way of spanning $[n]^2$ for $n \geq 6$:
\begin{enumerate}
\item choose a natural number $s \in (\frac{n}{3}-3,\frac{n}{3}+3]$ such that $6 | n+s-5$ (note that, in particular, this implies $2 | n-s-1$),
\item in Phase 1 span a rectangle $P_0 \in \Rec(s,2)$ in the maximum possible time,
\item in Phase 2 obtain $P_1 \in \Rec(s+1,3)$ by applying Move $1$ to $P_0$,
\item in Phase 3 obtain $P_{\frac{n-s+1}{2}} \in \Rec(n,\frac{n-s+5}{2})$  by applying Move $4$ $\frac{n-s-1}{2}$ times,
\item in Phase 4 obtain $P_{\frac{2n-s-1}{3}} = [n]^2$ by applying Move $6$ $\frac{n+s-5}{6}$ times.
\end{enumerate}
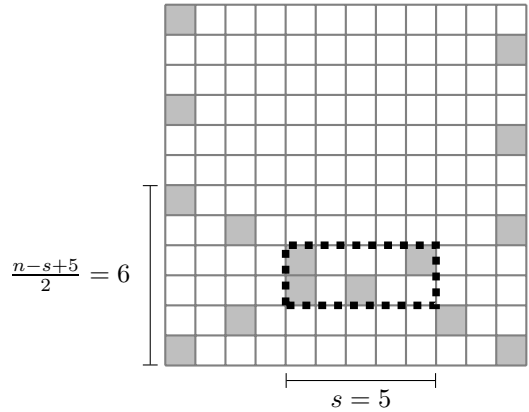
\begin{figure}[ht]
  \centering
  \begin{tikzpicture}[scale=\tikzscale]
    \fill[color=lightgray] (2,1) rectangle (2.5,1.5);
    \fill[color=lightgray] (2,1.5) rectangle (2.5,2);
    \fill[color=lightgray] (3,1) rectangle (3.5,1.5);
    \fill[color=lightgray] (4,1.5) rectangle (4.5,2);
    \fill[color=lightgray] (4.5,0.5) rectangle (5,1);
    
    \fill[color=lightgray] (1,2) rectangle (1.5,2.5);
    \fill[color=lightgray] (1,0.5) rectangle (1.5,1);
    \fill[color=lightgray] (5.5,0) rectangle (6,0.5);
    \fill[color=lightgray] (5.5,2) rectangle (6,2.5);
    \fill[color=lightgray] (0,2.5) rectangle (0.5,3);
    \fill[color=lightgray] (0,0) rectangle (0.5,0.5);
    
    \fill[color=lightgray] (0,4) rectangle (0.5,4.5);
    \fill[color=lightgray] (5.5,3.5) rectangle (6,4);
    \fill[color=lightgray] (0,5.5) rectangle (0.5,6);
    \fill[color=lightgray] (5.5,5) rectangle (6,5.5);
    
    \draw[step=.5cm,gray,thick]
    (0,0) grid (6,6);
    
    \draw[line width=1mm,black,dashed]
    (2,1) -- (2,2) -- (4.5,2) -- (4.5,1) -- (2,1);
      
    \draw [|-|, black]
      (2,-0.25)  -- (4.5,-0.25)
       node [black,midway,below] {$s=5$};
     \draw [|-|, black]
      (-0.25,0)  -- (-0.25,3)
       node [black,midway,left=4pt] {$\frac{n-s+5}{2}=6$};
  \end{tikzpicture}
  \caption{Example of a set giving a lower bound for $n=12$}
  \label{figure:lowerBound}
\end{figure}
Let us compute the time it takes to span $[n]^2$ this way:
\begin{enumerate}
\item Phase 1 takes time $\floor{\frac{3(s-1)}{2}} \geq \frac{n}{2}-7$,
\item Phase 2 takes time $s > \frac{n}{3}-3$,
\item Phase 3 takes time
\[
 \sum_{i=0}^{\frac{n-s-3}{2}} (s+5 + 3i) = \frac{3n^2-2sn-s^2+8n-12s-11}{8} > \frac{5n^2}{18}+\frac{n}{2}+7,
\]
\item Phase 4 takes time
\[
 \frac{n+s-5}{6}(2n-1) = \frac{2n^2-11n+2ns-s+5}{6} > \frac{8n^2}{18}-\frac{26n}{9}+\frac{4}{3}.
\]
\end{enumerate}
Therefore, this way of infecting $[n]^2$ takes time at least $\frac{13n^2}{18}-\frac{14n}{9} -\frac{5}{3}$ to complete and the lower bound on $M(n)$ is proved.

To find an upper bound on $M(n)$, we would like to improve Proposition~\ref{prop:step3} and show that there is a scheme of the form $(s_0, 2, [1]^{\leq 1}[4]^*[6]^*)$. The main issue is that, due to the cycle $4 \to 7 \to 5 \to 6 \to 4$ in Figure \ref{figure:moves}, there is no obvious way to order Move $4$s, $5$s, $6$s and $7$s in our schemes. Another problem we would have to face is the fact that divisibility constraints restrict the number of times we can apply particular moves to eventually construct the $n \times n$ square.

To deal with both issues we shall introduce a more general and rather abstract process in which fractional Moves $4$, $5$, $6$ and $7$ can be applied. In this process, our aim is also to infect the square $[n]^2$. It will be obvious that the maximum spanning time in this new process is at least as big as in the $2$-neighbour bootstrap percolation. To be more precise, we will allow the following fractional moves (recall Figure~\ref{figure:movesdirections}). For $x \in (0,\infty)$
\begin{enumerate}
\item Move $(4, x)$ applied to a rectangle $P \in \Rec(s,t)$ spans $P' \in \Rec(s+2x,t+x)$ in time $x(s+t+1)+3(x^2-x)/2$.
\item Move $(5, x)$ applied to a rectangle $P \in \Rec(s,t)$ spans $P' \in \Rec(s + x,t+2x)$ in time $x(s+t+1)+3(x^2-x)/2$.
\item Move $(6, x)$ applied to a rectangle $P \in \Rec(s,t)$ spans $P' \in \Rec(s,t+3x)$ in time $x(2s-1)$.
\item Move $(7, x)$ applied to a rectangle $P \in \Rec(s,t)$ spans $P' \in \Rec(s+3x,t)$ in time $x(2t-1)$.
\end{enumerate}

We note that the amount of time that each fractional move takes was chosen so that: (a) for fixed $i \in \{4, 5, 6, 7\}$ and positive real numbers $x$, $y$, applying Move $(i,x)$ to a rectangle $R$ to get some rectangle $R'$ and then applying Move $(i,y)$ to $R'$, is equivalent to applying Move $(i, x+y)$ to $R$ only; (b) when $x$ is a natural number then applying Move $(i,x)$ is equivalent to applying the original Move $i$ exactly $x$ times. Crucially, using the new fractional moves we shall be able to get rid of Move $5$ completely so that the remaining moves will be easy to order.

Let us note that although the first part of Section \ref{sec:valueMnn} could be seen as a significant extension of the methods developed in \cite{benevidesprzykucki02}, the idea of fractional moves is a new concept that has not been studied before.

Let $Q = (s_0, 2, m_1m_2 \ldots m_r)$ be a scheme solving $M(n)$ of the form
\[
 (s_0, 2, [1]^{\leq 1}[3]^{\leq 2}[4|5|6|7]^*) \mbox{ or } (s_0, 2, [3]^{\leq 2}[1]^{\leq 1}[2]^*[4|5|6|7]^*),
\]
that exists by Proposition~\ref{prop:step3}. Let $A$ be an $(n,n)$-perfect set determined by $Q$ and let $P_0 \subset P_1 \subset \ldots \subset P_r \in \Rec(n,n)$ be the sequence of rectangles associated with it with $P_i \in \Rec(s_i,t_i)$. Let $j_0$ be such that $P_{j_0}$ is the rectangle obtained after the last occurrence of any of the Move $1$s, $2$s or $3$s. If there are no such moves, we set $j_0 = 0$. Since Move~$1$ is applied at most once and Move~$3$ at most twice we have $t_{j_0} \le 7$. Hence there is a scheme in which we first infect a rectangle $R \in \Rec(s_{j_0}, t_{j_0})$ where $t_{j_0} \le 7$ and then apply only Move $4$s, $5$s, $6$s or $7$s. Without loss of generality assume that $s_{j_0} \geq t_{j_0}$.

Using (fractional) moves we shall first construct a particular triple
\[
Q'=(s_0, 2, m_1\ldots m_{j_0} (m'_{j_0+1},x_{j_0+1})(m'_{j_0+2},x_{j_0+2}) \ldots (m'_{r'},x_{r'}))
\]
that infects $[n]^2$ in our generalized process in time at least as big as $Q$ does in bootstrap percolation, and then bound from above the time it takes to execute $Q'$. Recall that by using Move $m_i$ in $Q$ we finish infection of a rectangle $P_i \in \Rec(s_i,t_i)$. We build $Q'$ using the following procedure in which our aim is to ensure that at each step $j \geq j_0$ the rectangles $P'_j \in \Rec(s'_j,t'_j)$ that we obtain in the generalized process satisfy $s'_j \geq t'_j$. This allows us to eliminate all occurrences of Move $5$ (for an example of this procedure see Figure \ref{figure:generalizedProcess}). Set $h=j_0+1$ and for $i=j_0+1, j_0+2, \ldots, r$ let:
\begin{enumerate}
  \item If $m_{i} = 4$ or $m_{i} = 7$ put $m'_{h} = m_{i}$, $x_{h}=1$ and increase $h$ by $1$.
  \item If $m_{i} = 6$ and $s_{i} \geq t_{i}$ put $m'_{h} = 6$, $x_{h}=1$ and increase $h$ by $1$.
  \item If $m_{i} = 5$ and $s_{i} \geq t_{i}$ put $m'_{h} = 4$, $m'_{h+1} = 6$, $x_{h}=x_{h+1}=1/2$ and increase $h$ by $2$; note that in the generalized process this pair of fractional moves takes time \[
    \frac{s_{i-1}+t_{i-1}+1}{2}-\frac{3}{8} +\frac{2(s_{i-1}+1)-1}{2} = \frac{3s_{i-1}+t_{i-1}}{2}+\frac{5}{8},                                                                                                                                                                                               \]
 while the original Move $5$ takes $s_{i-1}+t_{i-1}+1$ steps which is less than the former value as we must have $s_{i-1} \geq t_{i-1}+1$.
  \item If $m_{i} = 5$ or $m_{i} = 6$ and $s_{i-1} = t_{i-1}$ then
  \begin{itemize}
    \item redefine $Q$ by, for $i \leq \ell \leq r$, changing each $m_{\ell}=4$ to $5$, $m_{\ell}=5$ to $4$, $m_{\ell}=6$ to $7$ and $m_{\ell}=7$ to $6$,
    \item note that after this ``mirror reflection'' the spanning time of $Q$ does not change,
    \item since now $m_{i} = 4$ or $m_{i} = 7$ then, like in case 1, put $m'_{h} = m_{i}$, $x_{h}=1$ and increase $h$ by $1$.
  \end{itemize}
  \item If $m_{i} = 6$ and $s_{i-1} = t_{i-1}+2$ (and hence $s_{i} = t_{i}-1$) then
  \begin{itemize}
    \item redefine $Q$ by setting $m_{i}=5$ so that $s_{i} = t_{i}+1$ and, for $i+1 \leq \ell \leq r$, by changing each $m_{\ell}=4$ to $5$, $m_{\ell}=5$ to $4$, $m_{\ell}=6$ to $7$ and $m_{\ell}=7$ to $6$,
    \item note that both the new and the old Move $m_{i}$ take $2s_{i-1}-1$ time steps and that after this modification $Q$ still spans $[n]^2$ in maximum time,
    \item put $m'_{h} = 4$, $m'_{h+1} = 6$, $x_{h}=x_{h+1}=1/2$ and increase $h$ by $2$; note that, like in case 3, in the generalized process this pair of fractional moves takes strictly more steps than the original Move $5$.
  \end{itemize}
  \item Finally we show that the only missing case $m_{i} = 6$, $s_{i-1} = t_{i-1}+1$ and $s_{i} = t_{i}-2$ cannot occur: if it did then we could increase the spanning time of $Q$ by $1$ step (contradicting its maximality) by applying the following modifications:
  \begin{itemize}
    \item redefine $Q$ by setting $m_{i}=4$ and, for $i+1 \leq \ell \leq r$, by changing each $m_{\ell}=4$ to $5$, $m_{\ell}=5$ to $4$, $m_{\ell}=6$ to $7$ and $m_{\ell}=7$ to $6$,
    \item note that now $s_i = t_i +2$ and that after this ``mirror reflection'' $Q$ still spans $[n]^2$,
    \item new Move $m_{i}$ takes $s_{i-1}+t_{i-1}+1=2s_{i-1}$ time steps while the old Move $m_{i}$ took $2s_{i-1}-1$ time steps; further steps take the same time as before thus $Q$ could not be a scheme.
    %\FB{did we cover the case: $m_{j+i} = 6$, $s_{j+i-1} = t_{j+i-1}$ and $s_{j+i}= t_{j+i}-3$?}\MP{yes, that's in Ponit 4.}
  \end{itemize}
\end{enumerate}

\begin{figure}[ht]
  \centering
  \begin{tikzpicture}[scale=\tikzscale]
    \fill[color=lightgray] (0,0) rectangle (2.5,2);
    \draw[step=.5cm,gray]
    (0,0) grid (7.5,7.5);
	\draw[->,gray] (0,0)--(0,8) node [left] {$\ell$};    
	\draw[->,gray] (0,0)--(8,0) node [below] {$k$};    
	    
    \draw[line width=0.1mm,black,dashed]
    (0,0) -- (7.5,7.5);
    \tikzstyle{every node}=[draw,shape=circle,fill=black,inner sep=0pt,minimum size=3pt]
    \draw[line width=0.3mm,black]
    (2.5,1) node{} -- (2.5,2) node{} -- (3.5,2.5) node{} -- (3.5,4) node{} -- (4,5) node{} -- (5,5.5) node{} -- (6,6) node{} -- (6,7.5) node{} -- (7.5,7.5) node{};
    \draw[line width=0.5mm,black,dashed]
    (2.5,1) -- (2.5,2) -- (3.5,2.5) -- (4,2.75) node{} -- (4,3.5) node{} -- (5,4) node{} -- (5.5,4.25) node{} -- (5.5,5) node{} -- (6,5.25) node{} -- (6,6) -- (7.5,6) node{} -- (7.5,7.5);
    \tikzstyle{every node}=[]
    \draw (3.75,-0.25)  node [below] {$s_i$};
    \draw (-0.25,3.75) node [left] {$t_i$};
  \end{tikzpicture}
  \caption{Example of the generalized infection process for $n=15$ (which is not a scheme
for M(15), but which we use for demonstration purposes). Circular marks depict dimensions of rectangles $P_i \in \Rec(s_i,t_i)$ and $P'_i \in \Rec(s'_i,t'_i)$ obtained after consecutive moves. In this example we have a triple (which is not a scheme for $M(15)$ but we use it for demonstration purpose) $Q=(5,2,34654467)$ (solid line) and its modification $Q'=(2,5,3(4,1)(4,1/2)(6,1/2)(4,1)(4,1/2)(6,1/2)(4,1/2)$ $(6,1/2)(7,1)(6,1))$ (dashed line); note that here $j_0=1$, $s_{j_0}=5$ and $t_{j_0}=4$ (shaded rectangle represents the rectangle $P_{j_0}$).}
  \label{figure:generalizedProcess}
\end{figure}
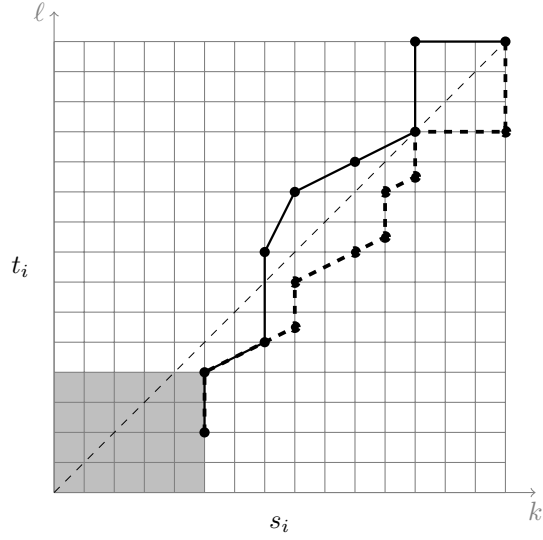

We do not have any occurrences of Move $5$ in $Q'$ and Move $4$s, $6$s and $7$s occur in multiples of $1/2$, i.e., all $x_i$'s are either $1/2$ or $1$. In Table \ref{table4} we show that wanting to maximize infection time we should keep the order of half--moves as suggested in Figure \ref{figure:moves}. That is, we should have Move $7$s followed by $4$s and finally by $6$s.

\begin{table}[ht]\centering
  \begin{tabular}{|c||c|c|c|} 
    \hline
       & $m'_i = 4$ & $m'_i = 6$ & $m'_i = 7$ \\
    \hline
       & & & \\[-1em]
    \hline
    $m'_{i-1} = 4$ & $k+\ell-2$ & $(3k+\ell)/2-15/8$ & $(k+3\ell)/2-15/8$ \\
    \hline
    $m'_{i-1} = 6$ & $(3k+\ell)/2-17/8$ & $2k-1$ & $k+\ell-5/2$ \\
    \hline
    $m'_{i-1} = 7$ & $(k+3\ell)/2-13/8$ & $k+\ell-5/2$ & $2\ell-1$ \\
    \hline
  \end{tabular}
  \caption{Time taken by consecutive half--Moves $(m'_{i-1},1/2)(m'_{i},1/2)$, assuming that $P'_i \in \Rec(k, \ell)$.}  
  \label{table4} 
\end{table}
Thus, for some $x,y,z \in [0, \infty)$, we obtain $Q'' =(s_0, 2, m_1 \ldots m_{j_0}(7,x)(4,y)(6,z))$ that takes at least as long to infect $[n]^2$ in our generalized infection process as a scheme $Q$ solving $M(n)$ does in bootstrap percolation. Denote the rectangle that we obtain when we apply Move $(7,x)$ to $P_{j_0}$ by $P_{j_0+x} \in \Rec(s,t)$ and note that we must have $y = (n-s)/2$ and $z = (n - t - (n-s)/2)/3$. Recall that $P_{j_0} \in \Rec(s_{j_0}, t_{j_0})$ with $t_{j_0} \leq 7$ and therefore $s = s_{j_0} + 3x$ and $t = t_{j_0} \leq 7$. To bound the spanning time of $Q''$ from above we may start by being generous and saying that $M(P_{j_0+x}) \le st \le 7s$. We then compute the time needed to apply Move $(4,y)$ and Move $(6,z)$. We conclude that the percolation time of $Q''$ can be bounded from above by

\begin{multline*}
s t + \frac{(n-s)}{2}(t+s+1) +\frac{3}{2}\frac{(n-s)}{2}\frac{(n-s-2)}{2} + \frac{(n-t-\frac{n-s}{2})}{3}(2n-1) \leq \\
\leq 7s + \frac{(n-s)(s+8)}{2} + \frac{3(n-s)(n-s-2)}{8} + \frac{(n+s)(2n-1)}{6} = f_n(s).
\end{multline*}

Maximizing $f_n(s)$ over $0 \leq s \leq n$ we find that its maximum is $f(\frac{n+43}{3}) = \frac{13}{18}n^2 + \frac{77}{18}n + \frac{1849}{72}$. That gives an upper bound on $M(n)$ and therefore completes the proof of Theorem \ref{theorem:arbitraryAsymptotics}. \endproof

\begin{remark}
We believe that $M(n)$ is achieved using a scheme of the form $(s, 2, [1]^{\le 1}[4]^*[6]^*)$ where $s = n/3 + O(1)$. However, we expect that a much more tedious case analysis might be necessary to prove this statement.
\end{remark}

\section{Concluding remarks and further questions}
\label{sec:questions}

In this paper we give the asymptotic formula for the maximum percolation time in the grid $[n]^2$ under $2$-neighbour bootstrap percolation. Our results allow us to prove the following two theorems about the maximum time of $2$-neighbour bootstrap percolation in other related graphs.

\begin{theorem}
\label{theorem:torus}
 Let $\TT_n^2$ be the $2$-dimensional $n\times n$ discrete torus and let $M^{\TT}(n)$ denote the maximum percolation time in $\TT_n^2$. Then $M^{\TT}(n) = 13n^2/18 + O(n)$.
\end{theorem}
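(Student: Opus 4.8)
The plan is to prove the matching bounds $M^{\TT}(n)\ge\frac{13}{18}n^2-O(n)$ and $M^{\TT}(n)\le\frac{13}{18}n^2+O(n)$, in both directions by comparison with the square grid, leaning on Theorem~\ref{theorem:arbitraryAsymptotics} and the recursion of Theorem~\ref{theorem:Mnrecurrence}.

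For the \emph{lower bound} I would take the explicit slowly percolating set $A\subseteq[n]^2$ constructed in the proof of Theorem~\ref{theorem:arbitraryAsymptotics} — the one described by a triple $(s,2,1[4]^*[6]^*)$ with $s=n/3+O(1)$ — and regard it as a subset of $V(\TT_n^2)$. Since adding edges can only enlarge closures, $A$ percolates $\TT_n^2$; the point is that it is still nearly as slow. Throughout the process started from $A$ the infected set is a single rectangle (a full-width horizontal band during the last phase), and $A$ can be positioned so that this rectangle meets a ``seam'' of the torus — the $n$ edges joining column $1$ to column $n$, respectively row $1$ to row $n$ — only during the last $O(n)$ steps, and so that, at every such moment, the two endpoints of each seam edge are in the same state. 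Hence no seam edge transmits infection until the last $O(n)$ steps, the torus process agrees with the grid process of Theorem~\ref{theorem:arbitraryAsymptotics} except on $O(n)$ steps, and $M^{\TT}(n)\ge\frac{13}{18}n^2-O(n)$.

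For the \emph{upper bound}, let $A$ percolate $\TT_n^2$ in time $T$; we may assume $|A|=O(n)$, since otherwise $T\le n^2-|A|$ is already small enough. Here I would follow the strategy of Theorem~\ref{theorem:Mnrecurrence}, the only genuinely new difficulty being that an internally spanned ``rectangle'' of the torus can wrap around. Split the process at the first time $\sigma$ at which the infected region has wound all the way around the torus in both directions (so that what remains healthy consists of bounded, simply connected regions surrounded by infected sites). Before $\sigma$ the infected set is confined to a proper cylinder of $\TT_n^2$ — the complement of a non-contractible cycle that stays healthy up to time $\sigma$ — and, cutting the torus along healthy seams, the process up to time $\sigma$ becomes an ordinary $2$-neighbour bootstrap process on a grid $[k]\times[\ell]$ with $k,\ell\le n+O(1)$ and with the same infection times; this part is therefore bounded by $M(k,\ell)\le M(n)+O(n)$, using Theorems~\ref{theorem:Mnrecurrence}--\ref{theorem:arbitraryAsymptotics} and the comparison $M(n+c)=M(n)+O(n)$ for constant $c$ (immediate from the recursion of Theorem~\ref{theorem:Mnrecurrence}, or from Claim~\ref{claim:increaseby1}). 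From time $\sigma$ on, the simply connected healthy regions erode inwards and vanish quickly, and using perimeter and Claim~\ref{claim:cornersLast}-type estimates one checks that these remaining steps do not push the total past $\frac{13}{18}n^2+O(n)$.

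The main obstacle is making the split at time $\sigma$ rigorous: one must show that before the infection has wound round the torus in both directions there exist healthy ``seams'', one in each relevant direction, along which no edge has ever carried infection, so that cutting there changes neither the dynamics nor any infection time — the delicate case being when the infection has already wrapped in one direction but not the other, which is handled by working on the intermediate cylinder. One must also do the bookkeeping carefully, so that the grid bound for the part before $\sigma$ and the erosion estimate for the part after $\sigma$ combine to $\tfrac{13}{18}n^2+O(n)$ rather than to twice that. The remaining points — the placement of the construction of Theorem~\ref{theorem:arbitraryAsymptotics} in $\TT_n^2$ for the lower bound, and the routine erosion and comparison estimates — are straightforward, if a little technical.
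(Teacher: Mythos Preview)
Your lower bound has a concrete error. In the construction of Theorem~\ref{theorem:arbitraryAsymptotics} the rectangle reaches full width $n$ at the end of Phase~3, and all of Phase~4 (the Move~6 phase, lasting $\Theta(n^2)$ steps, not $O(n)$) is spent with a width-$n$ band. The auxiliary sites used for each Move~6 lie in the leftmost and rightmost columns of this band; on the torus these two columns are adjacent, so for each application of Move~6 the pair $\{(n,h+2),(1,h+3)\}$ immediately infects $(n,h+3)$ at time~$1$ via the wraparound edge. Your claim that ``the two endpoints of each seam edge are in the same state'' is therefore false throughout Phase~4, and in fact columns $1$ and $n$ above the band fill in $O(1)$ time, after which each Move~6 is accelerated by a constant factor. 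The embedded construction percolates $\TT_n^2$ in time strictly below $\tfrac{13}{18}n^2$. The paper sidesteps this entirely: it takes an $(n-2,n-2)$-perfect set $A$ spanning $[n-2]^2$ in time $M(n-2)$ with a corner last, and sets $A\cup\{(n-1,n-1)\}$; since the slow part lives in a proper subgrid with a one-site buffer on every side, no seam edge is ever relevant until after time $M(n-2)=\tfrac{13}{18}n^2+O(n)$.

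Your upper bound has a genuine gap at exactly the point you flag as ``delicate''. If the infection has already wrapped in one direction at time $\sigma-1$, only one healthy non-contractible cycle remains, and cutting along it yields a cylinder $\ZZ_n\times[n]$, not a grid. You then need the maximum percolation time on the cylinder to be at most $\tfrac{13}{18}n^2+O(n)$, which is essentially a second theorem of the same strength; ``working on the intermediate cylinder'' does not explain how to get it. (Nor is the post-$\sigma$ erosion obviously $O(n)$: at time $\sigma$ the healthy set need only miss every row and every column, which still allows $\Theta(n^2)$ healthy sites, and Claim~\ref{claim:cornersLast} gives information about corners of spanned rectangles, not about arbitrary simply connected healthy regions on a torus.) The paper's route is different: it observes that $2$-neighbour bootstrap on $\TT_n^2$ still admits a rectangle-process description in the spirit of Proposition~\ref{prop:rectangles}, with ``rectangles'' that may fold around the torus, and then reruns the case analysis of Theorem~\ref{theorem:Mnrecurrence} directly on $\TT_n^2$ with the folding accounted for --- no reduction to the grid or to a cylinder is attempted.
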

{\par{\it Sketch of Proof}. \ignorespaces}
 To see that $M^{\TT}(n) \geq 13n^2/18 + O(n)$ consider, for $n \geq 4$, an $(n-2,n-2)$-perfect set $A$ that spans the square $(n-2) \times (n-2)$ in time $M(n-2) = 13n^2/18 + O(n)$ and such that $I_A(n-2,n-2) = M(n-2)$. Then the set $A \cup \{(n-1,n-1)\}$ percolates $\TT_n^2$ in time at least $M(n-2)$.

 For the upper bound on $M^{\TT}(n)$ it is not enough to argue that $\TT_n^2$ having greater connectivity than the square $n \times n$ implies that the infection process in $\TT_n^2$ runs faster. This is because there exists sets that percolate $M^{\TT}(n)$ but do not percolate the square $n \times n$, e.g., a diagonal minus one site. However, $2$-neighbour bootstrap percolation on $\TT_n^2$ can be seen as a similar ``rectangle process'' as described in Proposition \ref{prop:rectangles}. Performing a case analysis like in the proof of Theorem \ref{theorem:Mnrecurrence} with a bit of extra care needed to accommodate for the effect of ``folding'' it follows that $M^{\TT}(n) \leq 13n^2/18 + O(n)$.
\endproof

Using an asymmetric version of Proposition \ref{prop:step3} and the idea of fractional moves, for $\alpha \in (0,1)$ and $n$ large, assuming that $\alpha n$ is a natural number, we can determine the asymptotic value of $M(n,\alpha n)$. All we need to do is, for both $P_0 \in \Rec(s_0,2)$ and $P_0 \in \Rec(2,s_0)$, to follow the same reasoning as in Section \ref{sec:valueMnn} when we obtained the upper bound on $M(n)$. Constructing a set percolating in $[n] \times [\alpha n]$ in an essentially maximum time, and hence obtaining a corresponding lower bound on $M(n,\alpha n)$, is then immediate.

\begin{theorem}
 \label{theorem:maxRectangles}
We have:
\begin{enumerate}
 \item If $\frac{1}{3} \leq \alpha \leq 1$ then $M(n,\alpha n) = \left(\frac{2\alpha}{3}+\frac{1}{18}\right)n^2 + O(n)$.
 \item If $0 < \alpha \leq\frac{1}{3}$ then $M(n,\alpha n) = \left(\alpha-\frac{\alpha^2}{2}\right)n^2 + O(n)$.
\end{enumerate}
\end{theorem}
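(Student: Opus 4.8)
The plan is to rerun the argument of Section~\ref{sec:valueMnn} almost verbatim, but keeping track of the two side lengths $n$ and $\alpha n$ separately instead of exploiting the symmetry $k=\ell$. For the upper bound I would first record the \emph{asymmetric version of Proposition~\ref{prop:step3}}: for all $k,\ell\ge 3$ with $(k,\ell)\ne(3,3)$ there is a scheme solving $M(k,\ell)$ of the form $(s_0,2,[1]^{\le 1}[3]^{\le 2}[4|5|6|7]^*)$ or $(s_0,2,[3]^{\le 2}[1]^{\le 1}[2]^*[4|5|6|7]^*)$, or one of these two forms with the roles of the two coordinates interchanged. This is essentially immediate: Proposition~\ref{prop:step1} already applies to general $k,\ell$, and together with Remark~\ref{rem:smallMnn} it produces a scheme with $\{s_0\ge 3,\ t_0=2\}$ or $\{s_0=2,\ t_0\ge 3\}$; the proofs of Propositions~\ref{prop:step2} and~\ref{prop:step3} never used $k=\ell$, only that the short initial side has length $2$, so they carry over unchanged in the first case and in a mirrored form in the second, the handful of finite-case comparisons only needing to be re-verified.

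Second, I would apply the fractional-moves machinery exactly as in the proof of Theorem~\ref{theorem:arbitraryAsymptotics}. After the $\le 3$ initial moves (Moves $1,2,3$) we reach a rectangle $P_{j_0}$ with a side of length $t_{j_0}\le 7$; eliminating all occurrences of Move~$5$ and reordering the remaining half-moves according to Table~\ref{table4} (Move~$7$s, then Move~$4$s, then Move~$6$s) yields a triple $Q''=(s_0,2,m_1\ldots m_{j_0}(7,x)(4,y)(6,z))$ whose generalized percolation time dominates $M(n,\alpha n)$. Writing $s=s_{j_0}+3x$ and using $M(P_{j_0+x})\le 7s$, exactly one of the coordinate constraints $s+2y=n,\ t_{j_0}+y+3z=\alpha n$ or $s+2y=\alpha n,\ t_{j_0}+y+3z=n$ holds (depending on the orientation of the target), so $x,y,z$ are determined by $s$ and the time is bounded above by an explicit quadratic $f_{n,\alpha}(s)$ for each orientation; we maximize each over the feasible range $s\ge 0$, $z\ge 0$ and take the larger. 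The orientation putting $n$ in the ``$+2y$'' slot dominates: its interior maximum sits at $s=n/3+O(1)$, giving $\bigl(\tfrac{2\alpha}{3}+\tfrac{1}{18}\bigr)n^2+O(n)$, \emph{provided} this $s$ is feasible, which requires $n/3\ge(1-2\alpha)n+O(1)$, i.e.\ $\alpha\ge\tfrac13$. When $\alpha<\tfrac13$ the constraint $z\ge 0$ is binding, the optimum is at $z=0$ (no Move~$6$), $s=(1-2\alpha)n+O(1)$, $y=\alpha n+O(1)$, and there $f_{n,\alpha}$ equals $\bigl(\alpha-\tfrac{\alpha^2}{2}\bigr)n^2+O(n)$; this is precisely the claimed case split (and the two formulas agree at $\alpha=\tfrac13$, and the first reduces to $\tfrac{13}{18}n^2$ at $\alpha=1$).

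Third, the matching lower bounds are explicit constructions generalizing the one in the proof of Theorem~\ref{theorem:arbitraryAsymptotics}: span $\Rec(s,2)$ in time $\floor{\tfrac{3(s-1)}{2}}$, apply Move~$1$ once, then apply Move~$4$ about $\beta n$ times (so the short side reaches $\approx\beta n$, which forces the long side to reach $\approx n$ once we set $s\approx(1-2\beta)n$), and then, when $\alpha\ge\tfrac13$, apply Move~$6$ about $\tfrac{(\alpha-\beta)n}{3}$ times to grow the short side from $\approx\beta n$ to $\alpha n$; with a divisibility-correcting choice $s=(1-2\beta)n+O(1)$ the total time is $\bigl(\beta-\tfrac{\beta^2}{2}\bigr)n^2+\tfrac{2(\alpha-\beta)}{3}n^2+O(n)$, which is maximized at $\beta=\tfrac13$ when $\alpha\ge\tfrac13$ and at the boundary $\beta=\alpha$ (no Move~$6$) when $\alpha\le\tfrac13$, matching the upper bounds. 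I expect the genuinely new content to be small, namely the asymmetric Proposition~\ref{prop:step3} and the feasibility bookkeeping that produces the threshold $\alpha=\tfrac13$; the real labour, as in Section~\ref{sec:valueMnn}, is the quadratic optimization and checking that the $O(n)$ errors (especially the divisibility adjustments to $s$) are uniform in the fixed $\alpha$. The step most likely to need care is verifying that in the second ($s_0=2$) case of the asymmetric Proposition~\ref{prop:step3} the elimination of Move~$5$ and the reordering of half-moves still respect the invariant $s'_j\ge t'_j$ that drives the argument, since the target rectangle is no longer square and the long/short orientation is now forced by $\alpha<1$ rather than chosen for convenience.
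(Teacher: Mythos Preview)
Your proposal is correct and follows essentially the same approach as the paper's own sketch: an asymmetric version of Proposition~\ref{prop:step3} (considering both $P_0\in\Rec(s_0,2)$ and $P_0\in\Rec(2,s_0)$), the fractional-move reduction to a triple $(7,x)(4,y)(6,z)$, and the quadratic optimization in $s$ that produces the threshold $\alpha=\tfrac13$ via the feasibility constraint $z\ge 0$. Your lower-bound constructions and their optimization over $\beta$ likewise coincide with the paper's explicit choices $s\approx n/3$ for $\alpha\ge\tfrac13$ and $s\approx(1-2\alpha)n$ for $\alpha<\tfrac13$.
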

{\par{\it Sketch of Proof}. \ignorespaces}
For $\frac{1}{3} \leq \alpha \leq 1$ a construction that gives us the lower bound on $M(n,\alpha n)$ first infects a roughly $\frac{n}{3} \times 3$ rectangle in time $O(n)$, then using Move 4 $\left(\frac{n}{3}+O(1)\right)$ times extends it to a roughly $n \times \frac{n}{3}$ one in time $\frac{5n^2}{18}+O(n)$, and then finishes the infection in additional $\left(\frac{2\alpha}{3}-\frac{2}{9}\right)n^2+O(n)$ time steps using Move~6 $\left(\left(\frac{\alpha}{3}-\frac{1}{9}\right)n+O(1)\right)$ times.

A construction that gives us the lower bound on $M(n,\alpha n)$ when $0 < \alpha \leq\frac{1}{3}$ first infects a roughly $(1-2\alpha) n \times 3$ rectangle in time $O(n)$, and then finishes the infection in additional $\left(\alpha-\frac{\alpha^2}{2}\right)n^2 + O(n)$ time steps using Move 4 $\left(\alpha n+O(1)\right)$ times.
\endproof

The most obvious continuation and generalization of our work is establishing the maximum percolation time in the grid $[n]^d$ under $r$-neighbour bootstrap percolation, for all values of $d$ and $r$. Let us present the following partial result for $r=2$.

Let $T^d(A)$ denote the time that $A$ takes to percolate in $[n]^d$ under $2$-neighbour bootstrap percolation, so that $T(A) = T^2(A)$. Then the maximum percolation time in $2$-neighbour bootstrap percolation in $[n]^d$ is defined as
\[
M^d(n) = \max \{ T^d(A): \closure{A}=[n]^d \},
\]
so that $M(n) = M^2(n)$. Together with Simon Griffiths from the Instituto Nacional de Matem{\'a}tica Pura e Aplicada, Rio de Janeiro, Brazil, we proved the following theorem.
\begin{theorem}
\label{theorem:n^2}
For all $d \geq 1$ fixed,
 \[
 \frac{6d^2-5d-1}{18}n^2 + O(n) \leq M^d(n) \leq \frac{d^2}{2}n^2 + O(n).
 \]
\end{theorem}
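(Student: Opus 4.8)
The plan is to prove the two inequalities separately, in each case working with a $d$-dimensional version of the ``box process'' that already underlies the two-dimensional arguments.

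\emph{Upper bound.} I would first invoke the $d$-dimensional analogue of Proposition~\ref{prop:rectangles} --- a routine extension that also appears in the literature on $p_c([n]^d,r)$: if a box $R$ of volume at least $2$ is internally spanned by $A$, then there are sub-boxes $R',R''\subsetneq R$, internally spanned by disjoint subsets of $A$, with $\closure{R'\cup R''}=R$ and $\dist(R',R'')\le 2$. Write $\Phi(R)=\sum_i a_i$ for the sum of the side lengths of $R\in\Rec(a_1,\dots,a_d)$, and set $\mu(\sigma)=\max\{M(R):\Phi(R)\le\sigma\}$. Exactly as in Section~\ref{sec:recurrency}, $M(R)\le\max\{M(R'),M(R'')\}+T(R',R'')$, where $T(R',R'')$ is the purely geometric time needed to fill $R\setminus(R'\cup R'')$ once $R'\cup R''$ is infected. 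The one point requiring an argument is the estimate $T(R',R'')\le\diam(R)+O(1)$: since $\closure{R'\cup R''}=R$ and $R'$, $R''$ are ``fat'' boxes, their complement in $R$ is swept out by a bounded-speed wave spreading inward from $R'\cup R''$, so it is infected within $\diam(R)=\Phi(R)-d$ steps (this is the higher-dimensional counterpart of the easy observation in two dimensions that $T(R',R'')$ depends only on the geometry and is $O(k+\ell)$). As $\Phi(R'),\Phi(R'')\le\Phi(R)-1$, this gives $\mu(\sigma)\le\mu(\sigma-1)+\sigma+O(1)$, hence $\mu(\sigma)\le\tfrac12\sigma^2+O(\sigma)$ by telescoping, and finally $M^d(n)=M([n]^d)\le\mu(dn)\le\tfrac{d^2}{2}n^2+O(n)$, the constant in $O(n)$ depending on the fixed $d$.

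\emph{Lower bound.} For the other inequality I would build an explicit slowly percolating subset of $[n]^d$, generalising the construction in the proof of Theorem~\ref{theorem:arbitraryAsymptotics}. One fixes a thin initial box and then grows the coordinate directions up to their full length $n$ one at a time, using for each direction a combination of ``diagonal'' moves (the analogue of Move~$4$) and ``slab'' moves (the analogue of Move~$6$), arranged so as to maximise the time. The essential new feature in dimension $d$ is the cost of the generalised moves: a slab move that advances one coordinate by $3$, carried out on a box that is already full-sized in $m$ of the remaining coordinate directions, takes about $2mn$ steps, because the three new layers are infected by a wave that must sweep across --- and then back across --- the $m$-dimensional full cross-section, whose $L_1$-diameter is $\approx mn$. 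Bringing the $i$-th direction up to full size (when $i-1$ of the others are already full) therefore costs $\Theta(in^2)$, and summing the stage costs, with the small extra gain coming from the diagonal moves at the transitions between consecutive directions, yields total time $\tfrac{6d^2-5d-1}{18}n^2+O(n)$; equivalently one may set this up as an induction on $d$, running the $(d-1)$-dimensional construction inside a slab $[n]^{d-1}\times\{1\}$ arranged so that a corner is infected last, and then appending a final stage that adds $\tfrac{12d-11}{18}n^2+O(n)$. The cases $d=1$ (where the bound is vacuous) and $d=2$ (where it is Theorem~\ref{theorem:arbitraryAsymptotics}) serve as sanity checks.

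\emph{Where the difficulty lies.} The upper bound is essentially routine once the $d$-dimensional rectangle lemma and the estimate $T(R',R'')=O(\Phi(R))$ are in place. The real work, and the main obstacle, is the lower bound: one must define the generalised diagonal and slab moves so that they genuinely describe valid $2$-neighbour processes, verify their running times --- in particular the $\approx 2mn$ cost of a slab move, which conceals a careful analysis of how an infected full cross-section drives the infection of adjacent layers (the higher-dimensional manifestation of exactly the phenomenon responsible for the constant $13/18$ rather than $1$ in two dimensions) --- and then control the $O(n)$ error terms and the divisibility constraints across all the stages. I would expect this bookkeeping, rather than any single new idea, to make a complete proof long, and I would not be surprised if the bound is not tight, since the two constants are $\tfrac{d^2}{2}$ and $\sim\tfrac{d^2}{3}$ and so disagree in the leading term.
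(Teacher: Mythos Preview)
Your outline matches the paper's sketch almost exactly: the upper bound via the $d$-dimensional rectangle lemma plus $T(R',R'')\le\diam(R)+O(1)$ and a telescoping sum, and the lower bound via a construction that grows the coordinates successively using higher-dimensional analogues of Moves~$4$ and~$6$. Two points of comparison are worth flagging. First, the paper does not justify $T(R',R'')\le\diam(R)+1$ by an inward-sweeping wave from all of $R'\cup R''$ (that intuition is fragile, since $R'$ and $R''$ individually are stable); instead it locates a single site $v\in R$ with $\dist(v,R')=\dist(v,R'')=1$ and shows by induction on $t$ that every site at distance $t-1$ from $v$ is infected by step $t$. Second, and more substantively, your ``equivalent'' inductive reformulation --- run the full $(d{-}1)$-dimensional construction in $[n]^{d-1}\times\{1\}$ and then append a stage worth $\tfrac{12d-11}{18}n^2$ --- does not work as stated: once $[n]^{d-1}$ is completely full you can only grow the last coordinate by Move-$6$-type steps, each costing about $2(d-1)n$ per three layers, for a total of only $\tfrac{12d-12}{18}n^2+O(n)$. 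The extra $\tfrac{1}{18}n^2$ per dimension that you correctly identify as coming from the diagonal transitions requires those Move-$4$ steps to be performed \emph{before} the previous coordinate reaches $n$; accordingly the paper runs its induction not on $[n]^{d-1}$ but on the intermediate cuboid $C_d$ of shape $n\times\cdots\times n\times\lfloor n/3\rfloor$, proving $t_d(n)=\tfrac{6d^2-13d+7}{18}n^2+O(n)$ for that object and only then finishing with Move~$6$'s.
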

{\par{\it Sketch of Proof}. \ignorespaces}
 For the lower bound, we generalize the construction in Theorem~\ref{theorem:arbitraryAsymptotics} to all dimensions. We first show by induction that for all $d \geq 1$ we can infect a $d$-dimensional cuboid $C_d$ with sides of length $n \times \ldots \times n \times \lfloor n/3 \rfloor$ in time $t_d(n) = (6d^2-13d+7)n^2/18 + O(n)$. Note that for $d=1$ this quantity is $O(n)$ and for $d=2$ it is $5n^2/18 + O(n)$, agreeing with the description of Phase 3 in the proof of the lower bound in Theorem \ref{theorem:arbitraryAsymptotics}. Having infected a $d$-dimensional cuboid of that form we infect a $(d+1)$-dimensional one by generalizing our earlier construction (note that $C_d$ can be in fact seen as a $(d+1)$-dimensional cuboid with the $(d+1)$th coordinate equal to $1$): using $n/3$ times an equivalent of Move 4 we repetitively grow $C_d$ by $2$ in the $d$th dimension and by $1$ in the $(d+1)$th dimension (with the $i$-th application of the equivalent of Move 4 taking approximately $(2(d-1)+1/3)n+3(i-1)$ time steps 
these sum up to $(2d/3-7/18)n^2+O(n)$ steps). Thus the formula for $t_d$ follows by induction.

 Finally, having infected $C_d$ in time $t_d(n)$, we finish the infection of $[n]^d$ by performing $2n/9$ times an equivalent of Move 6. Namely, we repetitively grow $C_d$ by $3$ in the $d$th direction, each such operation taking $2(d-1)n+O(1)$ time steps. Thus the lower bound on $M^d(n)$ follows.

 For the upper bound we first use a generalization of Proposition \ref{prop:rectangles} to all $d \geq 2$ (see Lemma 2.3 in~\cite{bootstraphigh}). Namely, if a cuboid $C$ in $[n]^d$ is internally spanned by a set $A$ then there exist some two strictly smaller cuboids in $C$ that are internally spanned by two disjoint subsets of $A$ and the union of which internally spans $C$. Reapplying this proposition inductively we see that the maximum percolation time in $[n]^d$ is bounded from above by the sum of percolation times of strictly smaller and smaller cuboids internally spanned by unions of two fully infected cuboids.

 We then show that for any two fully infected cuboids $C_1, C_2$ such that $\closure{C_1 \cup C_2} = C$ we have that $C_1 \cup C_2$ internally spans $C$ in time not larger than $\diam(C)+1$. We prove this fact by first noticing that (unless $C_1 \cup C_2 = C$ in which case the fact is trivial) there must exist some $v \in C$ such that $\dist (v,C_1) = \dist (v,C_2) = 1$. Obviously $v$ becomes infected at the first step of the process. We then prove by induction on $t \geq 2$ that all sites in $C$ at distance $t-1$ from $v$ are infected after at most $t$ steps of the process. This implies that $\diam(C)+1$ is an upper bound on the time that $C_1 \cup C_2$ takes to percolate $C$. Thus we obtain an upper bound on $M^d(n)$ equal to $\sum_{i=1}^{d(n-1)+1}i = d^2 n^2 / 2 + O(n)$.
\endproof

Note that the lower bound on $M^d(n)$ is sharp for $d=2$, i.e., it gives the right constant $13/18$. We believe that it is in fact sharp for all $d \geq 1$ and this motivates the following conjecture.
\begin{conj}[Benevides, Griffiths, Przykucki]
For all $d \geq 1$ fixed,
\[
 M^d(n) = \frac{6d^2-5d-1}{18}n^2 + O(n).
\]
\end{conj}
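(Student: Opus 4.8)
The lower bound $M^d(n)\ge \frac{6d^2-5d-1}{18}n^2+O(n)$ is already contained in Theorem~\ref{theorem:n^2}, so the content of the conjecture is the matching upper bound, which would sharpen the crude estimate $\frac{d^2}{2}n^2+O(n)$ of that theorem. The plan is to lift the whole apparatus of Sections~\ref{sec:recurrency} and~\ref{sec:valueMnn} from rectangles to $d$-dimensional cuboids. First I would use the higher-dimensional version of Proposition~\ref{prop:rectangles} (Lemma~2.3 of~\cite{bootstraphigh}): every internally spanned cuboid $C$ is the closure of $C_1\cup C_2$ for two strictly smaller internally spanned cuboids $C_1,C_2\subsetneq C$ with $\dist(C_1,C_2)\le 2$. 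Writing $\Phi(C)=a_1+\cdots+a_d$ for the sum of side lengths, one checks that $\Phi(C)\le\Phi(C_1)+\Phi(C_2)$ whenever $C=\closure{C_1\cup C_2}$ for cuboids $C_1,C_2$, and, after normalising $C_2$ to a minimal subcuboid $\tilde{C}_2$ with $\closure{C_1\cup\tilde{C}_2}=C$ exactly as in the proof of Theorem~\ref{theorem:Mnrecurrence}, the pair $(C_1,\tilde{C}_2)$ falls into one of finitely many \emph{moves}: in each, $\Phi$ grows by a bounded amount (by $2$ or $3$, as one expects from the case $d=2$) spread over the coordinates in one of $O(d^2)$ patterns, and passing from $C$ to $C_1$ lowers $\Phi$ by between $1$ and $O(1)$. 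This gives the $d$-dimensional counterpart of~\eqref{equation:Mnrecurrence},
\[
M^d(C)\ \le\ \max_{(C_1,\tilde{C}_2)}\Bigl(\max\{M^d(C_1),M^d(\tilde{C}_2)\}+T(C_1,\tilde{C}_2)\Bigr),
\]
with $T(C_1,\tilde{C}_2)$ depending only on the two shapes and their relative position.

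Given this recursion I would run the analogues of Propositions~\ref{prop:step1}--\ref{prop:step3}. Using a monotonicity statement like Claim~\ref{claim:increaseby1} ($M^d$ strictly increasing in each side length of a non-degenerate cuboid) and a ``non-corner sites are infected strictly early'' statement like Claim~\ref{claim:cornersLast}, one shows that a slowest scheme may be taken in a canonical form: an $O(1)$-time preamble after which $d-1$ of the coordinates are within $O(1)$ of their final value (the analogue of forcing $t_{j_0}\le 7$), followed by repeated use of essentially two moves --- the ``$(2,1)$-move'' growing one coordinate by~$2$ and a second by~$1$ (the analogue of Move~$4$) and the ``$+3$-move'' growing one coordinate by~$3$ (the analogue of Move~$6$) --- with all $+3$-moves after all $(2,1)$-moves. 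To get past the cycle of dashed and undirected edges that obstructs ordering the ``$\Phi+3$'' moves (the analogue of $4\to7\to5\to6\to4$ in Figure~\ref{figure:moves}), I would again use fractional moves: allow moves $(m,x)$ for real $x>0$ with time cost chosen so that $(m,x)$ then $(m,y)$ equals $(m,x+y)$ and $(m,k)$ equals $k$ honest moves. As in the proof of Theorem~\ref{theorem:arbitraryAsymptotics}, this eliminates all mirror-image moves and all divisibility constraints and reduces the problem to maximising one explicit quadratic $f_n(s)$ in the common length $s$ of the short directions after the preamble; optimising should return exactly $\frac{6d^2-5d-1}{18}n^2+O(n)$, the $(2,1)$-moves contributing $\frac{6d^2-13d+7}{18}n^2+O(n)$ and the $+3$-moves $\frac{8d-8}{18}n^2+O(n)$, matching the construction of Theorem~\ref{theorem:n^2}.

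The main obstacle is the combinatorics of the moves and their order. Even for $d=2$ the relation between consecutive moves (Figure~\ref{figure:moves}, Tables~\ref{table2}--\ref{table4}) is delicate and contains a genuine cycle, broken only by fractionalisation; for general $d$ the number of move types grows like $d^2$, so the pairwise-swap optimisation becomes a $\Theta(d^4)$-sized case analysis over $d$-dimensional shapes in which one must verify that every move other than the $(2,1)$-move and the $+3$-move is dominated --- this is where a false conjectured constant would show up, and also where most of the work lies. A second difficulty is the ``Condition~\ref{item:CondF}'' phenomenon: the merges in which $\Phi$ drops by only~$1$ must, as in the proof of Theorem~\ref{theorem:Mnrecurrence}, be re-analysed by applying the decomposition once more to $C_1$, and there are more such borderline configurations in higher dimensions. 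Finally, the preamble needs a sub-induction on the dimension, since a cuboid with one long side and $d-1$ bounded sides is itself a lower-dimensional maximum-time problem carrying an extra long direction, and the higher-dimensional analogue of Claim~\ref{claim:cornersLast} requires the induction-on-distance-from-a-seed argument already used in the sketch of the upper bound of Theorem~\ref{theorem:n^2}. I expect the fractional-move reduction and the final optimisation to be routine once the move set is controlled, and the bookkeeping around $\Phi$ and the normalisation of $\tilde{C}_2$ to be straightforward but lengthy.
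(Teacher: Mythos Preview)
The statement you are attempting to prove is explicitly a \emph{conjecture} in the paper; the paper offers no proof. The only results the authors establish in general dimension are the bounds of Theorem~\ref{theorem:n^2}: the lower bound $\frac{6d^2-5d-1}{18}n^2+O(n)$ via an explicit construction and the much weaker upper bound $\frac{d^2}{2}n^2+O(n)$ via the crude argument that two merged cuboids span their closure in time at most $\diam(C)+1$. There is therefore nothing in the paper to compare your proposal against.

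What you have written is not a proof but a research programme, and you yourself flag the places where it is incomplete. Two of these are genuine and not mere bookkeeping. First, the heart of the $d=2$ upper bound is the exhaustive classification of how $R'$ and $\tilde{R}''$ can sit inside $R$ (Conditions~\ref{item:CondB}--\ref{item:CondF}) together with the delicate treatment of Condition~\ref{item:CondF}, which required a second application of Proposition~\ref{prop:rectangles} and Claim~\ref{claim:cornersLast}. In $d$ dimensions the number of relative positions of two cuboids at distance at most $2$ explodes, and the analogue of Condition~\ref{item:CondF} (merges increasing $\Phi$ by only $1$) proliferates; it is not at all clear that a single further decomposition suffices, or that the analogue of Claim~\ref{claim:cornersLast} is strong enough. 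Second, the fractional-move trick in Section~\ref{sec:valueMnn} worked because after ordering one was left with essentially a one-parameter family of shapes (the value of $s$ after the preamble), reducing the optimisation to a single-variable quadratic. In $d$ dimensions, even after a canonical ordering, the intermediate cuboid has $d-1$ free side lengths, and you would need to show the optimum of the resulting $(d-1)$-variable problem is attained on the particular trajectory of the lower-bound construction. Neither of these steps is routine, and the paper's authors presumably left the statement as a conjecture precisely because they could not carry them out.
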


Another natural question which we leave for further work is the one about the maximum percolation time for higher infection thresholds in $[n]^d$. However, it is well known that percolation in $[n]^d$ for $r \geq 3$ is a completely different process from the one for $d=2$. For example, for $r \geq 3$, there is no description analogous to the rectangle process given in Proposition \ref{prop:rectangles}. It is therefore plausible that the maximum percolation time problem for higher infection thresholds will be completely different in nature. For example, one can expect a jump in the value of the maximum percolation time from $\Theta(n^2)$ for $r=2$ to $\Theta(n^d)$ for $r \geq 3$.
\begin{question}
What is the maximum percolation time in $r$-neighbour bootstrap percolation on $[n]^d$ for $r \geq 3$?
\end{question}

\appendix 

\section{Analysis of small cases} \label{appendix}

Assume that $(s_0, t_0, m_1m_2\ldots m_r)$ is a scheme for $M(k,\ell)$ for $k, \ell \geq 3$, $(k,\ell) \neq (3,3)$. Let $A$ be a $(k,\ell)$-perfect set described by it and let $P_0 \subset P_1 \subset \ldots \subset P_r \in \Rec (k,\ell)$ be the sequence of rectangles associated with $A$. We treat a number of small cases to exclude some, a priori possible, values for the numbers $s_0$ and $t_0$.

Suppose for a contradiction that $P_0 \in \Rec(s, 1)$. Since $P_1 \in \Rec(s_1,t_1)$ where $s_1, t_1 \geq 3$ and $\max\{s_1, t_1\} \geq 4$, one of the following cases must occur:
\begin{enumerate}
\item $P_1 \in \Rec(s, 3)$ with $s \geq 4$: since we have $M(s-1,2) \geq 3$,  by applying Move~1 to $[s-1] \times [2]$ we see that $M(s,3) \geq (s-1)+3 = s+2$. However, for $P_0 \in \Rec(s, 1)$ and $P_1 \in \Rec(s, 3)$, as in the infection process defined by $A$, it takes time at most $s+1$ to infect all sites in $P_1$ since both ending sites of the rectangle $P_0$ must be initially infected. This contradicts the fact that at every step~$i$ the time that $A$ takes to percolate $P_i$ is maximum;

\item $P_1 \in \Rec(s+1, 3)$ with $s \geq 3$: since we have $M(s,2) \geq 3$, by applying Move~1 to $[s] \times [2]$ we see that $M(s+1,3) \geq s+3$. However, for $P_0 \in \Rec(s, 1)$ and $P_1 \in \Rec(s+1,3)$, as in the infection process defined by $A$, it takes time at most $s+2$ to infect all sites of $P_1$ (by the same argument as above). This again contradicts the fact that $A$ is $(n,n)$-perfect;

\item $P_1 \in \Rec(s, 4)$ with $s \geq 3$: since we have $M(s,2) \geq s$, by applying Move~3 to $[s] \times [2]$ we see that $M(s,4) \geq s+s+1 = 2s+1$. However, for $P_0 \in \Rec(s, 1)$ and $P_1 \in \Rec(s, 4)$, as in the infection process defined by $A$, using again the same argument it takes time at most $2s-1$ to infect all sites of $P_1$. This contradicts the fact that $A$ is $(n,n)$-perfect.
\end{enumerate}

Thus, we may assume that $P_0 \notin \Rec(s, 1)$. Analogously, we may assume that $P_0 \notin \Rec(1,t)$ . Suppose now that $P_0 \in \Rec(3, 3)$. Considering $P_1 \in \Rec(s_1,t_1)$ up to symmetries one of the following cases must occur:
\begin{enumerate}
\item $P_1 \in \Rec(6, 3)$: by applying Move~7 it takes time $5$ to infect $P_1$ after $P_0$ is fully infected. This procedure takes time at most $M(3)+5 = 9$ to infect $P_1$. However, by applying Move~1 to $[5] \times [2]$ we see that $M(6,3) \geq M(5,2)+5 = 6+5=11$; this contradicts the fact that $A$ is $(n,n)$-perfect;
\item $P_1 \in \Rec(5, 4)$: by applying Move~4 it takes time $7$ to infect $P_1$ after $P_0$ is fully infected. This procedure takes time at most $M(3)+7 = 11$ to infect $P_1$. However, by applying Move~3 to $[5] \times [2]$ we see that $M(5,4) \geq M(5,2)+6 = 6+6=12$; this contradicts the fact that $A$ is $(n,n)$-perfect;
\item $P_1 \in \Rec(4, 4)$: by applying Move~1 it takes time $3$ to infect $P_1$ after $P_0$ is fully infected. This procedure takes time at most $M(3)+3 = 7$ to infect $P_1$. However, by applying Move~3 to $[4] \times [2]$ we see that $M(4) \geq M(4,2)+5 = 9$; this contradicts the fact that $A$ is $(n,n)$-perfect;
\item $P_1 \in \Rec(5, 3)$: by applying Move~2 it takes time $4$ to infect $P_1$ after $P_0$ is fully infected. This procedure takes time at most $M(3)+4 = 8$ to infect $P_1$. By applying Move~1 to $[4] \times [2]$ we also take time $M(4,2)+4 = 8$. Although this does not contradict the $(n,n)$-perfectness of $A$, we can replace it by an $(n,n)$-perfect set $A'$ whose infection process starts with a $P_0' \in \Rec(4,2)$ and expands to $P_1$, so that $A'$ takes the same time to percolate in $[n]^2$ as $A$.
\end{enumerate}

Thus, we may assume that $P_0 \notin \Rec(3, 3)$ and we have $P_0 \in \Rec(s, 2) \cup \Rec(2, s)$ for some $s \geq 3$.

\bibliographystyle{siam}

 \bibliography{mylargebib}
%\printindex
\end{document}